\newif\ifdraft\draftfalse
\def\@begintheorem#1#2[#3]{%
    \def\naam{#1}
  \deferred@thm@head{\the\thm@headfont \thm@indent
    \@ifempty{#1}{\let\thmname\@gobble}{\let\thmname\@iden}%
    \@ifempty{#2}{\let\thmnumber\@gobble}{\let\thmnumber\@iden}%
    \@ifempty{#3}{\let\thmnote\@gobble}{\let\thmnote\@iden}%
    \thm@swap\swappedhead\thmhead{#1}{#2}{#3}%
    \the\thm@headpunct
    \thmheadnl % possibly a newline.
    \hskip\thm@headsep
  }%
  \ignorespaces}
\newcommand{\kantlijndraft}[1]{\ifdraft\hspace{-\lastskip}%
\vadjust{\vspace{-1mm}\smash{\llap{{\tt #1}\hspace{8mm}}}\vspace{1mm}}\fi}
\def\voegToe#1#2#3{\immediate\write1{\string\newlabel{#1}{{#2}{#3}}}}
\newcommand{\thlabel}[1]{\voegToe{#1}{\naam\noexpand~\thetheorem}{\thepage}\kantlijndraft{#1}}
\renewcommand{\label}[1]{\voegToe{#1}{\@currentlabel}{\thepage}\kantlijndraft{#1}}
\newtheorem{theorem}{Theorem}[section]
\newtheorem{lemma}[theorem]{Lemma}
\newtheorem{corollary}[theorem]{Corollary}
\newtheorem{question}[theorem]{Question}
\theoremstyle{definition}
\newtheorem{definition}[theorem]{Definition}
\theoremstyle{remark}
\numberwithin{equation}{section}
\newtheorem{claim2}{\sc Claim}
\newcommand{\sse}{\subseteq}						%subset or equal [\sse]
\newcommand{\minus}{\backslash}						%minus (compliment) [\minus]
\newcommand{\Un}{\bigcup}							%union (bigcup) [\Un]
\newcommand{\un}{\cup}								%union (cup) [\un]
\newcommand{\Meet}{\bigcap}							%intersection (bigcap} [\Meet]
\newcommand{\meet}{\cap}							%intersection (cap} [\meet]
\newcommand{\es}{\varnothing}						%emptyset [\es]
\newcommand{\cl}[1]{\ensuremath{\overline{#1}}}
\newcommand{\scr}[1]{\ensuremath{\mathcal{#1}}}
\def\cprime{$'$}
\def\cont{\mathfrak{c}}
\def\arhangelskii{Arhangel{\cprime}ski{\u\i}}
\begin{document}

\title{More on cardinality bounds involving the weak Lindel\"of degree}

%    Information for authors:
\author{A. Bella}\address{Department of Mathematics, University of Catania, Viale A. Doria 6, 95125 Catania, Italy}
\email{bella@dmi.unict.it}
\author{N. Carlson}\address{Department of Mathematics, California Lutheran University, 60 W. Olsen Rd, MC 3750, 
Thousand Oaks, CA 91360 USA}
\email{ncarlson@callutheran.edu}
\author{I. Gotchev}\address{Department of Mathematical Sciences, Central Connecticut State University, 1615 Stanley Street, 
New Britain, CT 06050 USA}
\email{gotchevi@ccsu.edu}

%%%% change the following:
%\subjclass[2010]{54D20, 54A25, 54D10.}

%\keywords{cardinality bounds, cardinal invariants}

\begin{abstract} 
We give several new bounds for the cardinality of a Hausdorff topological space $X$ involving the weak Lindel\"of degree $wL(X)$. In particular, we show that if $X$ is extremally disconnected, then $|X|\leq 2^{wL(X)\pi\chi(X)\psi(X)}$, 
and if $X$ is additionally power homogeneous, then $|X|\leq 2^{wL(X)\pi\chi(X)}$. We also prove that if $X$ is an almost Lindel\"of space with a strong $G_\delta$-diagonal 
of rank 2, then $|X|\leq 2^{\aleph_0}$;  that if $X$ is a star-cdc space with a $G_\delta$-diagonal of rank 3, then $|X| \le 2^{\aleph_0}$; and if $X$ is any normal star-cdc space $X$ with a $G_\delta$-diagonal of rank 2, then $|X|\leq 2^{\aleph_0}$. 
Several improvements of results in~\cite{BC2018} are also given. We show that if $X$ is locally compact, then $|X|\leq wL(X)^{\psi(X)}$ and that $|X|\leq wL(X)^{t(X)}$ if $X$ is additionally power homogeneous. We also prove that 
$|X|\leq 2^{\psi_c(X)t(X)wL(X)}$ for any space with a $\pi$-base whose elements have compact closures and that the stronger inequality $|X|\leq wL(X)^{\psi_c(X)t(X)}$ is true when $X$ is locally $H$-closed or locally Lindel\"of.
\end{abstract}

\maketitle

%%%%%%%%%%%%%%%%%%%% section  %%%%%%%%%%%%%%%%%%%%%%%%%
\section{Introduction}\label{S1}
In this study we establish several new cardinality bounds for Hausdorff topological spaces involving the weak Lindel\"of degree and related cardinal invariants. Recall that the \emph{weak Lindel\"of degree} $wL(X)$ of a space $X$ is the least infinite cardinal 
$\kappa$ such that every open cover of $X$ has a $\kappa$-subfamily with dense union. When $wL(X)=\aleph_0$, the space $X$ is called \emph{weakly Lindel\"of}. Note that $wL(X)\leq c(X)$ for any space $X$, where $c(X)$ is the cellularity of $X$. 

By finding bounds for the cellularity $c(X)$ of an extremally disconnected space $X$, we show in Corollary~\ref{edcard} that $|X|\leq 2^{wL(X)\pi\chi(X)\psi(X)}$, for any extremally disconnected space $X$, where $\psi(X)$ is the 
pseudocharacter of $X$ and $\pi\chi(X)$ is the $\pi$-character of $X$. (A Hausdorff topological space $X$ is \emph{extremally disconnected} if the closure of every open subset of $X$ is open.) In Corollary~\ref{edhomog} the related bound 
$2^{wL(X)\pi\chi(X)}$ is given for the cardinality of any power homogeneous extremally disconnected space. Recall that a space $X$ is \emph{homogeneous} if for every $x,y\in X$ there exists a homeomorphism $h:X\to Y$ such that $h(x)=y$. 
$X$ is \emph{power homogeneous} if there exists a cardinal $\kappa$ such that $X^\kappa$ is homogeneous. 

In Section \ref{S3} we explore cardinal inequalities for spaces with $G_\delta$-diagonals of rank 2, and related notions (see Section \ref{S3} for definitions). We recall that Buzyakova in \cite{Buz06} showed that any ccc space with a regular $G_\delta$-diagonal has cardinality not 
exceeding $2^{\aleph_0}$. (Note that every space $X$ with a regular $G_\delta$-diagonal is a Urysohn space i.e. a space such that every two distinct points can be separated by disjoint closed neighborhoods.)
Buzyakova's inequality was extended to higher cardinalities by Gotchev in~\cite{Got} as follows: If $X$ is a Urysohn space, then $|X|\leq 2^{c(X)\overline{\Delta}(X)}$, where $\overline{\Delta}(X)$ is the regular diagonal degree of $X$. We recall that 
the \emph{regular diagonal degree} $\overline{\Delta}(X)$ of a Urysohn space $X$ is the minimal infinite cardinal $\kappa$ such that $X$ has a regular $G_\kappa$-diagonal. By replacing $c(X)$ with $wL(X)$, in the same paper, Gotchev proved that 
if $X$ is a Urysohn space, then $|X|\leq 2^{\overline{\Delta}(X)\cdot 2^{wL(X)}}$. Therefore, every weakly Lindel\"of space with a regular $G_\delta$-diagonal has cardinality at most $2^\cont$.
Bella in \cite{Bel87} showed that a ccc space with a $G_\delta$-diagonal of rank 2 has cardinality at most $2^{\aleph_0}$. In relation to these results in Theorem~\ref{TG} we show that if $X$ is an almost 
Lindel\"of space with a strong $G_\delta$-diagonal of rank 2, then $|X|\leq 2^{\aleph_0}$. We recall that the \emph{almost Lindel\"of degree} of a space $X$, denoted $aL(X)$, is the smallest infinite cardinal $\kappa$ such that for every open cover 
$\scr V$ of $X$, there is a subcollection $\scr{V}_0$ of $\scr V$ such that $|\scr{V}_0|\le\kappa$ and $\{\overline{V}:V\in \scr{V}_0\}$ covers $X$. When $aL(X)=\aleph_0$, the space $X$ is called \emph{almost Lindel\"of}.

Recall that a space $X$ has \emph{countable discrete cellularity} (cdc) if every discrete family of open sets in $X$ is countable. We show in Theorem~\ref{TG2} 
that if $X$ is a star-cdc space with a $G_\delta$-diagonal of rank 3, then $|X| \le 2^{\aleph_0}$, and in Theorem~\ref{nscdc} that any normal star-cdc space $X$ with a $G_\delta$-diagonal of rank 2 satisfies $|X|\leq 2^{\aleph_0}$. 

In 1978, Bell, Ginsburg, and Woods showed in \cite{BGW1978} that $|X|\leq 2^{wL(X)\chi(X)}$ for any normal $T_1$-space $X$, where $\chi(X)$ is the character of the space $X$. 
It is still an open question of Bell, Ginsburg and Woods if the inequality $|X|\leq 2^{wL(X)\chi(X)}$ holds true for regular, $T_1$-spaces \cite{BGW1978} and even for Tychonoff spaces \cite{BC2018}.
Therefore, it is interesting to know if there are other classes of spaces for which Bell, Ginsburg and Woods' inequality is true. Here we mention several such classes of spaces closely related to our results in Section \ref{S4}. For results related to 
Bell, Ginsburg, and Woods' question using topological games see \cite{ABS}.

Gotchev proved in \cite{Got} that for any Urysohn space $X$, $|X|\leq wL(X)^{\overline{\Delta}(X)\chi(X)}$. Therefore, if $X$ is a space with a regular $G_\delta$-diagonal, then $|X| \le wL(X)^{\chi(X)}$. Notice that this inequality improves significantly the 
above-mentioned Bell, Ginsburg and Woods' inequality for the class of normal spaces with regular $G_\delta$-diagonals.
(To see that take a discrete space $Y$ with cardinality of the continuum $\cont$. Then $\chi(Y)=\omega$, $wL(Y)=\cont$ and according to Bell, Ginsburg and Woods' inequality $|Y|\le 2^\cont$, while Gotchev's inequality gives $|Y|\le\cont^\omega=\cont$.)

Bella and Carlson in \cite{BC2018} found more classes of spaces $X$ for which Bell, Ginsburg and Woods' inequality holds true, including locally compact, locally Lindel\"of, locally ccc, regular locally normal $T_1$-spaces (\cite[Theorem 2.12]{BC2018}), 
and also quasiregular or Urysohn spaces with a dense set of points with neighborhoods that are $H$-closed, normal, Lindel\"of, or ccc (\cite[Corollary 2.7]{BC2018}). 

In \cite{BGW1978}, Bell, Ginsburg, and Woods constructed an example of a zero-dimentional Hausdorff (and therefore Tychonoff) space $Y$ such that $|Y|> 2^{wL(Y)t(Y)\psi(Y)}$ and asked if the inequality $|X|\le 2^{wL(X)t(X)\psi(X)}$ is true for every 
normal $T_1$-space $X$. While this question, in its full generality, is still open, partial answers are known. For example, in~\cite{BC2018}, it was shown that $|X|\leq 2^{wL(X)t(X)\psi(X)}$ for any 
regular $T_1$-space with a $\pi$-base whose elements have compact closures. We give an improvement of this result in Theorem~\ref{TBCG} where we show that $|X|\leq 2^{wL(X)t(X)\psi_c(X)}$ for any 
Hausdorff space with a $\pi$-base whose elements have compact closures. (Note that in regular spaces the pseudocharacter $\psi(X)$ is equal to the closed pseudocharacter $\psi_c(X)$).

In Section \ref{S4} we also show that even the stronger inequality $|X|\leq wL(X)^{\psi_c(X)t(X)}$ is true when the Hausdorff space $X$ is locally $H$-closed (Theorem \ref{th4.2}) or locally Lindel\"of (Theorem \ref{th4.4}).
As corollaries of these results we obtain refinements of some of the results in~\cite{BC2018} mentioned above similar to Gotchev's inequality for spaces with regular $G_\delta$-diagonals. 
For example, while in the paper \cite{BC2018} it was proved that $|X|\leq 2^{wL(X)\psi(X)}$, for any locally compact Hausdorff space, we show in Corollary~\ref{lc} that in fact $|X|\leq wL(X)^{\psi(X)}$ for such spaces. 
Similarly, we prove in Theorem~\ref{lcph} that if $X$ is a locally compact power homogeneous Hausdorff space, then $|X|\leq wL(X)^{t(X)}$. This improves the inequality $|X|\leq 2^{wL(X)t(X)}$ proved in \cite{BC2018} under the same hypotheses.

In this work we assume all spaces are Hausdorff. See~\cite{EN} or~\cite{Juhasz} for any undefined notions.

%%%%%%%%%%%%%%%%%%%% section  %%%%%%%%%%%%%%%%%%%%%%%%%
\section{A cardinal inequality for extremally disconnected spaces}\label{S2}

We establish bounds for the cardinality of any extremally disconnected space in Corollay~\ref{edcard2} and Corollay~\ref{edcard}. This is a consequence of the following cardinal inequalities for the cellularity $c(X)$ of an extremally disconnected space. 
We recall that a family $\scr V$ of non-empty open sets in a space $X$ is a local $\pi$-base at a point $x \in X$ if for every open neighborhood $U$ of $x$ there is $V \in \scr V$ such that $V \subset U$. The minimal infinite cardinal $\kappa$ such that 
for each $x \in X$ there is a collection $\{V(\eta,x):\eta<\kappa\}$ of non-empty open subsets of $X$ which is a local $\pi$-base for $x$ is called \emph{$\pi$-character} of $X$ and is denoted by $\pi\chi(X)$.

\begin{theorem}\label{ed}
Let $X$ be an extremally disconnected space. Then
\begin{itemize}
\item[(a)] $c(X)\leq wL(X)t(X)$ and
\item[(b)] $c(X)\leq wL(X)\pi\chi(X)$.
\end{itemize}
\end{theorem}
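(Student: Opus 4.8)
The plan is to build, for each part, a chain of open sets $\{G_\alpha : \alpha < \kappa^+\}$ (where $\kappa$ denotes $wL(X)t(X)$ in part (a) and $wL(X)\pi\chi(X)$ in part (b)) together with a point $x_\alpha \in G_{\alpha+1}\setminus \overline{G_\alpha}$, using extremal disconnectedness to keep the construction inside the clopen algebra, and then derive a contradiction with the weak Lindel\"of degree. Suppose toward a contradiction that $c(X) \geq \kappa^+$, so there is a pairwise disjoint family $\{U_\xi : \xi < \kappa^+\}$ of non-empty open sets. The key structural fact I would use repeatedly is that in an extremally disconnected space the closure of an open set is clopen, so $\{\overline{U_\xi}\}$ is a pairwise disjoint family of clopen sets (disjointness of closures follows since if $\overline{U_\xi}\cap\overline{U_\eta}\neq\es$ then the open set $\overline{U_\xi}\cap\overline{U_\eta}$ meets $U_\eta$, forcing $U_\xi\cap U_\eta\neq\es$ as $\overline{U_\xi}$ is the closure — wait, more carefully: $\overline{U_\xi}$ clopen and meeting $U_\eta$ gives $\overline{U_\xi}\cap U_\eta$ open non-empty, and being inside $\overline{U_\xi}$ it meets $U_\xi$).

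First I would recursively construct clopen sets $W_\alpha = \overline{\bigcup_{\xi \in A_\alpha} U_\xi}$ for suitable increasing sets $A_\alpha \subseteq \kappa^+$ with $|A_\alpha| \leq \kappa$, so that each $W_\alpha$ is clopen, $W_\alpha \subsetneq W_{\alpha+1}$ properly (pick a new $U_\xi$ disjoint from $W_\alpha$), and at limits take $A_\alpha = \bigcup_{\beta<\alpha}A_\beta$ with $W_\alpha = \overline{\bigcup_{\beta<\alpha}W_\beta}$. Choose $x_\alpha \in U_{\xi_\alpha} \subseteq W_{\alpha+1}\setminus W_\alpha$. Now consider the open cover $\scr U = \{X\setminus \overline{\{x_\beta : \beta \geq \alpha, \beta < \kappa^+\}}\}$ — I would instead argue as follows: the set $D = \{x_\alpha : \alpha < \kappa^+\}$ has the property that for each $\alpha$, $x_\alpha \notin W_\alpha \supseteq \{x_\beta : \beta < \alpha\}$ closure-wise, i.e. $x_\alpha \notin \overline{\{x_\beta : \beta < \alpha\}}$. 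Then $\{X \setminus \overline{\{x_\beta : \beta < \alpha\}} : \alpha < \kappa^+\}$ together with suitable sets does not quite cover, but the family $\{W_{\alpha+1}\setminus W_\alpha\}$ type neighborhoods lets me invoke $t(X)$ (part (a)): any point $y \in \overline{D}$ lies in $\overline{E}$ for some $E \subseteq D$ with $|E|\leq t(X)$, hence $E \subseteq \{x_\beta : \beta < \gamma\}$ for some $\gamma < \kappa^+$, so $\overline{D} = \bigcup_{\gamma<\kappa^+}\overline{\{x_\beta:\beta<\gamma\}} = \bigcup_\gamma W_\gamma$-ish; the increasing clopen cover $\{W_\gamma\}$ of the clopen set $\overline{D}$ then has no dense sub-union of size $\leq \kappa$ since each $W_\gamma$ misses $x_\gamma$ and the $x$'s with index $\geq$ any fixed bound are dense in the tail — contradicting $wL(X) \leq \kappa$ applied to the cover $\{W_\gamma\}\cup\{X\setminus\overline{D}\}$ of $X$. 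For part (b), I replace the tightness argument by picking the witnessing point from a local $\pi$-base of size $\pi\chi(X)$ at a cluster point, running the analogous recursion on $\pi$-base elements.

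The main obstacle I expect is the limit-stage bookkeeping: ensuring the $W_\alpha$ remain of the form ``closure of a union of $\leq\kappa$ many of the $U_\xi$'' and that the chosen points $x_\alpha$ genuinely escape $W_\alpha$ while still lying in $\overline{D}$, so that at the end the increasing clopen chain $\{W_\alpha : \alpha < \kappa^+\}$ has union dense in $\overline{D}$ but no $\kappa$-sized subfamily with dense union — the cofinality-$\kappa^+$ versus weak-Lindel\"of-$\kappa$ clash is what produces the contradiction, and making the two cardinal bounds ($t(X)$ for (a), $\pi\chi(X)$ for (b)) feed into exactly this clash is the delicate point. I would handle (a) and (b) in parallel, isolating the common ``clopen chain vs.\ $wL$'' lemma and then plugging in the two different ways of certifying that the constructed point sits in the closure of the earlier points.
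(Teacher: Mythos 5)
Your construction of the increasing clopen chain $W_\alpha=\cl{\bigcup_{\xi\in A_\alpha}U_\xi}$ is fine (indeed, since $W_\alpha$ is the closure of the union of the cells indexed by $A_\alpha$, \emph{every} cell with index outside $A_\alpha$ is automatically disjoint from $W_\alpha$, so the successor and limit steps cause no trouble). The genuine gap is in the endgame. You set $D=\{x_\alpha:\alpha<\kappa^+\}$ and apply $wL(X)\le\kappa$ to the cover $\{W_\gamma:\gamma<\kappa^+\}\cup\{X\setminus\cl{D}\}$, claiming no $\kappa$-sized subfamily has dense union. But $D$ is just a set of points, not an open set, so extremal disconnectedness gives you nothing about $\cl{D}$: it need not be clopen (your phrase ``the clopen set $\overline{D}$'' is unjustified) and, more importantly, it typically has empty interior. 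A $\kappa$-sized subfamily has union contained in $W_\delta\cup(X\setminus\cl{D})$ for some $\delta<\kappa^+$, and an open set witnessing non-density would have to sit inside $\cl{D}\setminus W_\delta$; since $\cl{D}$ may have no interior at all (e.g. each cell $U_{\xi_\delta}$ meets $\cl{D}$ only in the single point $x_\delta$, so $U_{\xi_\delta}$ meets $X\setminus\cl{D}$ whenever it is not a singleton), the subfamily $\{X\setminus\cl{D}\}$ alone can already be dense and no contradiction with $wL(X)\le\kappa$ arises. So the ``chain versus $wL$'' clash you are counting on does not materialize when the chain is anchored to single chosen points.

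The missing idea is to keep whole cells, and whole small subfamilies of cells, in play rather than points. The paper starts from a \emph{maximal} cellular family $\scr{C}$ of size $\ge\kappa^+$, so that $\Un\scr{C}$ is dense; then for each boundary point $x\in\cl{\Un\scr{C}}\setminus\Un\scr{C}$ it produces $\scr{C}_x\in[\scr{C}]^{\le\kappa}$ with $x\in\cl{\Un\scr{C}_x}$ (via tightness for (a), and via a local $\pi$-base at $x$ together with density of $\Un\scr{C}$ for (b)); extremal disconnectedness makes each $\cl{\Un\scr{C}_x}$ open, so $\scr{C}\cup\{\cl{\Un\scr{C}_x}:x\in D\}$ is an open cover, and a $\kappa$-sized subfamily with dense union involves only $\kappa$ many members of $\scr{C}$ in total, whence any leftover cell $C\in\scr{C}$ is a nonempty open set that must meet that dense union yet cannot meet $\Un\scr{B}$ or any $\cl{\Un\scr{C}_x}$ by pairwise disjointness. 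Note that this per-point assignment $x\mapsto\scr{C}_x$ cannot be forced into initial segments of a single $\kappa^+$-chain (a boundary point's witnessing subfamily need not be an initial segment of your enumeration), which is why your chain framework, even repaired to use cells instead of points, would still have to be supplemented by exactly these point-by-point open sets; your sketch for (b) is likewise too vague on how $\pi\chi(X)$ certifies membership in closures of initial segments. As written, the proposal does not establish either (a) or (b).
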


\begin{proof}
For (a), let $\kappa=wL(X)t(X)$ and for (b) let $\kappa= wL(X)\pi\chi(X)$. In either case suppose $c(X)\geq\kappa^+$. 
Then there exists a cellular family of cardinality at least $\kappa^+$ which is contained in a maximal cellular family 
$\scr{C}$. We have $|\scr{C}|\geq\kappa^+$ and $X=\cl{\Un\scr{C}}$. 

Let $D=\cl{\Un\scr{C}}\minus\Un\scr{C}$. If $D=\emptyset$, then for every $x\in X$ there is $C_x\in\scr{C}$ such that 
$x\in C_x$. Then for each $x\in X$ we let $\scr{C}_x=\{C_x\}$. Now let $D\ne\emptyset$ and let $x\in D$. 

For (a), there exists $A_x\sse\Un\scr{C}$ such that $|A_x|\leq\kappa$ and $x\in \cl{A_x}$. For every $a\in A_x$ there 
exists $C(x,a)\in\scr{C}$ such that $a\in C(x,a)$. Let $\scr{C}_x=\{C(x,a):a\in A_x\}$. Note that 
$|\scr{C}_x|\leq |A_x|\leq\kappa$ and $x\in \cl{A_x}\sse\cl{\Un\scr{C}_x}$. 

For (b), let $\scr{P}$ be a local $\pi$-base at $x$ such that $|\scr{P}|\leq\kappa$. Since $X=\cl{\Un\scr{C}}$, for every 
$U\in\scr{P}$ there exists $C_U\in\scr{C}$ such that $C_U\meet U\neq\es$. Let $\scr{C}_x=\{C_U:U\in\scr{P}\}$ and note 
that $|\scr{C}|\leq\kappa$. It is straightfoward to see that $x\in\cl{\Un\scr{C}_x}$. 

In either case (a) or (b), we see from the above that for every $x\in D$ there exists $\scr{C}_x\in[\scr{C}]^{\leq\kappa}$ 
such that $x\in\cl{\Un\scr{C}_x}$. As each $\cl{\Un\scr{C}_x}$ is open in the extremally disconnected space $X$, we have 
that $\scr{C}\un\left\{\cl{\Un\scr{C}_x}: x\in D\right\}$ is an open cover of $X$. Therefore there exist 
$\scr{B}\sse [\scr{C}]^{\leq\kappa}$ and $B\in[D]^{\leq\kappa}$ such that 
$\Un\scr{B}\un\Un\{\cl{\Un\scr{C}_x}:x\in B\}$ is dense in $X$. 

Let $\scr{D}=\scr{B}\un\{\scr{C}_x:x\in B\}$. Note that $|\scr{D}|\leq\kappa$. Let $C\in\scr{C}\minus\scr{D}$. Then 
$C\meet\cl{\Un\scr{B}}\neq\es$ or $C\meet \cl{\Un_{x\in B}\cl{\Un\scr{C}_x}}\neq\es$. If 
$C\meet\cl{\Un\scr{B}}\neq\es$, then $C\meet (\Un\scr{B})\neq\es$. This is a contradiction as $\scr{C}$ is a pairwise 
disjoint family. If $C\meet \cl{\Un_{x\in B}\cl{\Un\scr{C}_x}}\neq\es$, then 
$C\meet\left(\Un_{x\in B}\cl{\Un\scr{C}_x}\right)\neq\es$. Then there exists $x\in B$ such that 
$C\meet\cl{\Un\scr{C}_x}\neq\es$, and thus $C\meet(\Un\scr{C}_x)\neq\es$. This is also a contradiction. Therefore 
$c(X)\leq\kappa$.
\end{proof}

\begin{corollary}\label{edcard2}
If $X$ is extremally disconnected, then $|X|\leq\pi\chi(X)^{wL(X)t(X)\psi_c(X)}$.
\end{corollary}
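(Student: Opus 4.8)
The plan is to derive Corollary~\ref{edcard2} directly from part~(a) of Theorem~\ref{ed} together with a classical cardinality inequality of \sapirovskii: every Hausdorff space $X$ satisfies $|X|\le\pi\chi(X)^{c(X)\psi_c(X)}$ (see, e.g., \cite{Juhasz}; in regular spaces this is the familiar $|X|\le\pi\chi(X)^{c(X)\psi(X)}$, and the Hausdorff version is obtained by replacing $\psi$ with the closed pseudocharacter $\psi_c$). Once this is granted, the corollary is a short cardinal-arithmetic computation, because all the genuine work — controlling the cellularity of an extremally disconnected space — has already been done in Theorem~\ref{ed}.

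Concretely, since $X$ is extremally disconnected, Theorem~\ref{ed}(a) gives $c(X)\le wL(X)t(X)$, and hence $c(X)\psi_c(X)\le wL(X)t(X)\psi_c(X)$. As $\pi\chi(X)$ is an infinite cardinal, the map $\mu\mapsto\pi\chi(X)^{\mu}$ is monotone, so
\[
|X|\ \le\ \pi\chi(X)^{c(X)\psi_c(X)}\ \le\ \pi\chi(X)^{wL(X)t(X)\psi_c(X)},
\]
which is the asserted bound.

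I do not anticipate a real obstacle here: the estimate is immediate once one has Theorem~\ref{ed}(a) and the Hausdorff form of \sapirovskii's inequality, so the only point needing a little care is to cite that inequality in its correct shape (Hausdorff spaces, closed pseudocharacter). The computation is moreover robust to minor variations in that citation: if one uses the form $|X|\le\pi\chi(X)^{c(X)t(X)\psi_c(X)}$ instead, the same substitution still yields the claim since $wL(X)t(X)\cdot t(X)=wL(X)t(X)$; and if a more self-contained route is preferred, the cited inequality can itself be split into a bound for $d(X)$ in terms of $\pi\chi(X)$ and $c(X)$ followed by a bound of the form $|X|\le d(X)^{t(X)\psi_c(X)}$, after which the replacement $c(X)\le wL(X)t(X)$ again gives the result.
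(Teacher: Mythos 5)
Your argument is correct and is essentially the paper's own proof: it cites the Hausdorff-space inequality $|X|\le\pi\chi(X)^{c(X)\psi_c(X)}$ (due to Sun \cite{SH88}) and then substitutes $c(X)\le wL(X)t(X)$ from Theorem~\ref{ed}(a), exactly as you do. The only (harmless) difference is attribution: the paper credits this $\psi_c$ form to Sun, reserving \sapirovskii's inequality (with $\psi$, for $T_3$ spaces) for the proof of Corollary~\ref{edcard}.
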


\begin{proof}
It was shown by Sun in \cite{SH88} that if $X$ is a Hausdorff space, then 
$|X|\le\pi\chi(X)^{c(X)\psi_c(X)}$. Using Theorem~\ref{ed}(a) we can replace in that inequality $c(X)$ with $wL(X)t(X)$ to obtain the required inequality.
\end{proof}

\begin{corollary}\label{edcard}
If $X$ is extremally disconnected, then $|X|\leq 2^{wL(X)\pi\chi(X)\psi(X)}$.
\end{corollary}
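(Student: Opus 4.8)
The plan is to deduce Corollary~\ref{edcard} from Corollary~\ref{edcard2} (or directly from Theorem~\ref{ed}) by combining the cellularity bound with a classical $\pi$-character-free cardinality inequality and then absorbing $\pi\chi(X)$ into an exponent. Concretely, the cleanest route is to invoke the \v{S}apirovski{\u\i}-type inequality $|X|\leq 2^{c(X)\psi(X)\cdot\text{(something)}}$; but more directly, the standard inequality $|X|\leq \pi\chi(X)^{c(X)\psi_c(X)}$ of Sun (already quoted in the proof of Corollary~\ref{edcard2}) gives us, via Theorem~\ref{ed}(b), that $|X|\leq \pi\chi(X)^{wL(X)\pi\chi(X)\psi_c(X)}$. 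Since $\psi_c(X)\leq\psi(X)\cdot t(X)$ in Hausdorff spaces and more simply since $\pi\chi(X)^{\pi\chi(X)}=2^{\pi\chi(X)}$, the right-hand side is at most $2^{\pi\chi(X)}\cdot \pi\chi(X)^{wL(X)\psi_c(X)}\leq 2^{\pi\chi(X)wL(X)\psi_c(X)}$.

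So the key steps, in order, are: first, set $\kappa=wL(X)\pi\chi(X)\psi(X)$ and observe that by Theorem~\ref{ed}(b) we have $c(X)\leq wL(X)\pi\chi(X)\leq\kappa$. Second, recall Sun's inequality $|X|\leq\pi\chi(X)^{c(X)\psi_c(X)}$ for Hausdorff $X$; substituting the cellularity bound yields $|X|\leq\pi\chi(X)^{wL(X)\pi\chi(X)\psi_c(X)}$. Third, estimate the exponent base: for any infinite cardinal $\lambda$ one has $\lambda^\mu=2^\mu$ whenever $\mu\geq\lambda$ (since $\lambda\leq 2^\lambda\leq 2^\mu$ and $2^\mu\leq\lambda^\mu\leq (2^\lambda)^\mu=2^\mu$), and here the exponent $wL(X)\pi\chi(X)\psi_c(X)$ is $\geq\pi\chi(X)$, so $\pi\chi(X)^{wL(X)\pi\chi(X)\psi_c(X)}=2^{wL(X)\pi\chi(X)\psi_c(X)}$. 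Fourth, absorb $\psi_c$ into $\psi\cdot t$ — but in fact this is not needed, since a cleaner observation is that $\psi_c(X)\leq\psi(X)wL(X)$ already fails in general; instead note $2^{wL(X)\pi\chi(X)\psi_c(X)}\leq 2^{wL(X)\pi\chi(X)\psi(X)t(X)}$ and then use Theorem~\ref{ed}(a), $c(X)\leq wL(X)t(X)$, together with the fact that $t(X)\leq\pi\chi(X)$ is false too — so the honest route is to bypass $\psi_c$ entirely by using instead the \v{S}apirovski{\u\i} bound $|X|\leq 2^{c(X)\psi(X)}$ when it applies, or more safely to recall that in extremally disconnected Hausdorff spaces (hence Urysohn, in fact zero-dimensional-like behavior) one does not get $\psi_c=\psi$ for free, so the genuinely correct finishing move is: $\psi_c(X)\leq 2^{\psi(X)}$ is too weak, hence one should instead quote the known inequality $|X|\leq 2^{c(X)\psi(X)}$ valid for all Hausdorff $X$ is \emph{not} a theorem — so the paper must be using a finer fact.

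Re-examining, the correct and intended argument is simply: apply Corollary~\ref{edcard2}, which gives $|X|\leq\pi\chi(X)^{wL(X)t(X)\psi_c(X)}$, and then use the \emph{first} part of Theorem~\ref{ed}, $c(X)\leq wL(X)t(X)$, is already baked in; what remains is to pass from $\psi_c$ and $t$ to $\psi$ and $\pi\chi$. The decisive inequality is $\psi_c(X)t(X)\leq\pi\chi(X)\psi(X)$ for Hausdorff $X$ — indeed $\psi_c(X)\leq\psi(X)t(X)$ always, and $t(X)\leq\pi\chi(X)$ is \emph{not} valid, but $t(X)\cdot\psi(X)\leq\pi\chi(X)\cdot\psi(X)$ is vacuous; the actual containment used is that a Hausdorff space satisfies $\psi_c(X)\leq\psi(X)\pi\chi(X)$? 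This is not standard either. Given the uncertainty about exactly which elementary cardinal-function inequality closes the gap, I expect \textbf{the main obstacle} to be precisely this bookkeeping step — identifying the right classical inequality (most likely $\psi_c(X)t(X)\leq 2^{\psi(X)\pi\chi(X)}$-style or a direct argument that in an extremally disconnected Hausdorff space closed pseudocharacter is controlled by $\psi$ and $\pi\chi$) that lets one replace the exponent $wL(X)t(X)\psi_c(X)$ from Corollary~\ref{edcard2} by $wL(X)\pi\chi(X)\psi(X)$, after which collapsing $\pi\chi(X)^{(\cdot)}$ to $2^{(\cdot)}$ via $\lambda^\mu=2^\mu$ for $\mu\geq\lambda$ finishes the proof immediately.
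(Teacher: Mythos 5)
Your proposal does not close the gap it itself identifies, and that gap is exactly where the paper's proof does its real work. Your correct partial argument (Sun's inequality $|X|\le\pi\chi(X)^{c(X)\psi_c(X)}$ plus Theorem~\ref{ed}(b), plus the observation that $\lambda^\mu=2^\mu$ when $\mu\ge\lambda$) only yields $|X|\le 2^{wL(X)\pi\chi(X)\psi_c(X)}$, which is \emph{weaker} than the stated corollary, since in Hausdorff spaces one only has $\psi(X)\le\psi_c(X)$ and not conversely. The various candidate inequalities you float to repair this ($\psi_c\le\psi t$, $t\le\pi\chi$, $|X|\le 2^{c\psi}$ for Hausdorff spaces, etc.) are, as you yourself suspect, either false or not available, and the proposal ends by conceding that you do not know which fact replaces the exponent $wL(X)t(X)\psi_c(X)$ of Corollary~\ref{edcard2} by $wL(X)\pi\chi(X)\psi(X)$. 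So the attempt is genuinely incomplete.

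The missing idea is not a bookkeeping inequality between $\psi_c$ and $\psi$ but a change of space: the paper passes to the semiregularization $X_s$. Since $X$ is extremally disconnected and Hausdorff, $X_s$ is extremally disconnected and $T_3$ (Porter--Woods, 6P(3)), so one may apply \v{S}apirovski{\u\i}'s inequality for \emph{regular} spaces, $|Y|\le\pi\chi(Y)^{c(Y)\psi(Y)}$, which involves $\psi$ rather than $\psi_c$, to $X_s$. Combined with the invariance facts $|X_s|=|X|$, $c(X_s)=c(X)$, $\pi\chi(X_s)\le\pi\chi(X)$, $\psi(X_s)\le\psi(X)$, and then Theorem~\ref{ed}(b) applied to $X$, one gets $|X|\le\pi\chi(X)^{wL(X)\pi\chi(X)\psi(X)}=2^{wL(X)\pi\chi(X)\psi(X)}$, the last step being the same $\lambda^\mu=2^\mu$ collapse you used. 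In short: your final absorption step and the use of Theorem~\ref{ed}(b) are fine, but without the semiregularization trick (which is where extremal disconnectedness is used a second time, to gain regularity and hence $\psi$ in place of $\psi_c$) your argument only proves the $\psi_c$ variant of the corollary.
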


\begin{proof}
As $X$ is extremally disconnected and Hausdorff, we have that the semiregularization $X_s$ of $X$ is extremally 
disconnected and $T_3$. (See, for example, 6P(3) in \cite{por88}). It was shown by \v{S}apirovski{\u\i} in \cite{Sap74} 
that $|Y|\leq\pi\chi(Y)^{c(Y)\psi(Y)}$ for any $T_3$-space $Y$. In addition, it was shown in \cite{car07} that 
$c(Y)=c(Y_s)$ and $\pi\chi(Y_s)\leq\pi\chi(Y)$ for any Hausdorff space $Y$. Using Theorem~\ref{ed}(b), the 
aforementioned facts, and that $\psi(Y_s)\leq\psi(Y)$ for any Hausdorff space, we have
\begin{align}
|X|=|X_s|&\leq \pi\chi(X_s)^{c(X_s)\psi(X_s)}\notag\\
&\leq\pi\chi(X)^{c(X)\psi(X)}\notag\\
&\leq\pi\chi(X)^{wL(X)\pi\chi(X)\psi(X)}\notag\\
&=2^{wL(X)\pi\chi(X)\psi(X)}.\notag
\end{align}
\end{proof}

It was shown in \cite{CR08} that $|X|\leq 2^{c(X)\pi\chi(X)}$ for every power homogeneous Hausdorff space $X$. Using Theorem~\ref{ed}(b) above we obtain the following corollary.

\begin{corollary}\label{edhomog}
If $X$ is an extremally disconnected, power homogeneous space, then $|X|\leq 2^{wL(X)\pi\chi(X)}$.
\end{corollary}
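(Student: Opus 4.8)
The plan is to combine Theorem~\ref{ed}(b) with the known bound $|X|\leq 2^{c(X)\pi\chi(X)}$ for power homogeneous Hausdorff spaces from \cite{CR08}. Since $X$ is extremally disconnected and Hausdorff, Theorem~\ref{ed}(b) gives $c(X)\leq wL(X)\pi\chi(X)$. Substituting this into the \cite{CR08} inequality yields
\[
|X|\leq 2^{c(X)\pi\chi(X)}\leq 2^{wL(X)\pi\chi(X)\cdot\pi\chi(X)}=2^{wL(X)\pi\chi(X)},
\]
where the last equality uses that $\pi\chi(X)$ is an infinite cardinal, so $\pi\chi(X)\cdot\pi\chi(X)=\pi\chi(X)$, and $wL(X)$ is infinite as well. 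This is essentially a one-line deduction; there is no real obstacle here, since both ingredients are already available — the substantive work was done in establishing Theorem~\ref{ed}(b). I would simply state the substitution and the cardinal arithmetic, mirroring the style of the proof of Corollary~\ref{edcard}. The only point worth a moment's care is confirming that the hypothesis of the \cite{CR08} result (power homogeneous Hausdorff) is met, which is exactly what is assumed, and that no regularity or further separation axiom is needed for that bound.

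\begin{proof}
Since $X$ is a power homogeneous Hausdorff space, we have $|X|\leq 2^{c(X)\pi\chi(X)}$ by the result in \cite{CR08} mentioned above. By Theorem~\ref{ed}(b), $c(X)\leq wL(X)\pi\chi(X)$. Therefore
\[
|X|\leq 2^{c(X)\pi\chi(X)}\leq 2^{wL(X)\pi\chi(X)\pi\chi(X)}=2^{wL(X)\pi\chi(X)}.
\]
\end{proof}
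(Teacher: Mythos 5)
Your proof is correct and is exactly the paper's argument: the corollary is stated there as an immediate consequence of the bound $|X|\leq 2^{c(X)\pi\chi(X)}$ from \cite{CR08} combined with Theorem~\ref{ed}(b), just as you do. The cardinal-arithmetic step $\pi\chi(X)\cdot\pi\chi(X)=\pi\chi(X)$ is the only detail needed, and you handle it correctly.
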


Recall that a space $X$ is \emph{H-closed} if every open cover of $X$ has a finite subfamily with dense union. We observe 
that it follows from Corollary~\ref{edhomog} that an $H$-closed, extremally disconnected, power homogeneous space has 
cardinality at most $2^{\pi\chi(X)}$. However, it was shown in \cite[Theorem 3.3]{car07} that an infinite $H$-closed 
extremally disconnected space cannot be power homogeneous. This latter result is an extension of a result of 
Kunen~\cite{K90} that an infinite compact $F$-space is not power homogeneous.

A Tychonoff space is \emph{basically disconnected} if the closure of each of its cozero sets is open. It is clear that every extremally disconnected Tychonoff space is basically disconnected, and that every basically disconnected space is an $F$-space.

\begin{theorem}
A weakly Lindel\"of basically disconnected Tychonoff space of countable tightness has countable cellularity.
\end{theorem}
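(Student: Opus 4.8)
The plan is to mimic the argument of Theorem~\ref{ed}(a), replacing "extremally disconnected" by "basically disconnected" and taking care that the open sets we must take closures of are cozero sets rather than arbitrary open sets. Suppose for contradiction that $c(X)\geq\aleph_1$. As in Theorem~\ref{ed}, choose a maximal cellular family $\scr{C}$ with $|\scr{C}|\geq\aleph_1$, so that $X=\cl{\Un\scr{C}}$. Since $X$ is Tychonoff, we may refine each member of $\scr{C}$: for each $C\in\scr{C}$ pick a nonempty cozero set $Z_C$ with $\cl{Z_C}\sse C$ (choose a point of $C$ and a continuous function witnessing regularity). Replacing $\scr{C}$ by $\{Z_C:C\in\scr{C}\}$, we may assume every member of $\scr{C}$ is a cozero set; the family is still cellular of size $\geq\aleph_1$, but it need no longer be maximal, so we must be careful. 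Let me instead keep the maximal cellular family $\scr{C}$ of open sets and the associated cozero refinement $\scr{Z}=\{Z_C:C\in\scr{C}\}$ side by side: $\scr{Z}$ is cellular, $Z_C\sse C$, and $\cl{\Un\scr{Z}}=\cl{\Un\scr{C}}=X$ because $\Un\scr{Z}$ meets every member of the maximal family $\scr{C}$.

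Now run the tightness argument. Let $D=X\minus\Un\scr{Z}$; if $D=\es$ proceed trivially as before. For $x\in D$, since $x\in\cl{\Un\scr{Z}}$ and $t(X)=\aleph_0$, there is a countable $A_x\sse\Un\scr{Z}$ with $x\in\cl{A_x}$; each $a\in A_x$ lies in some $Z(x,a)\in\scr{Z}$, and we set $\scr{Z}_x=\{Z(x,a):a\in A_x\}$, a countable subfamily of $\scr{Z}$ with $x\in\cl{\Un\scr{Z}_x}$. The key point is that $\Un\scr{Z}_x$ is a countable union of cozero sets, hence itself a cozero set, so in a basically disconnected space $\cl{\Un\scr{Z}_x}$ is open. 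Therefore $\scr{Z}\un\{\cl{\Un\scr{Z}_x}:x\in D\}$ is an open cover of $X$. Applying weak Lindel\"ofness, extract a countable subcover-with-dense-union indexed by $\scr{B}\sse\scr{Z}$ and $B\in[D]^{\leq\aleph_0}$, and let $\scr{D}=\scr{B}\un\{\scr{Z}_x:x\in B\}\in[\scr{Z}]^{\leq\aleph_0}$. For any $C\in\scr{C}$, the corresponding $Z_C\notin\scr{D}$ would have to meet $\cl{\Un\scr{B}}$ or $\cl{\Un_{x\in B}\cl{\Un\scr{Z}_x}}$ (by density), hence meet $\Un\scr{B}$ or some $\cl{\Un\scr{Z}_x}$ and thus $\Un\scr{Z}_x$; either way $Z_C$ meets another member of the pairwise disjoint family $\scr{Z}$, a contradiction unless $Z_C\in\scr{D}$. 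So $\{C\in\scr{C}:Z_C\in\scr{D}\}=\scr{C}$, forcing $|\scr{C}|\leq|\scr{D}|\leq\aleph_0$, a contradiction.

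The main obstacle — and the reason the hypothesis is weakened from "extremally disconnected" to "basically disconnected" at the cost of adding "Tychonoff" — is exactly that in a basically disconnected space one can only conclude $\cl{U}$ is open when $U$ is a cozero set, not for arbitrary open $U$. This is why the cellular family must be refined to cozero sets (using the Tychonoff hypothesis) and why it matters that a countable union of cozero sets is again a cozero set, so that $\cl{\Un\scr{Z}_x}$ is open and the cover we construct is genuinely an open cover. Once that bookkeeping is set up, the combinatorics of the contradiction is identical to Theorem~\ref{ed}(a). I would also remark afterwards that, as in Corollaries~\ref{edcard2}--\ref{edcard}, one can combine this with \v{S}apirovski{\u\i}'s and Sun's inequalities to bound $|X|$, though the statement as given only asserts $c(X)=\aleph_0$.
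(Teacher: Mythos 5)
Your argument has one genuine gap: the claim that $\cl{\Un\scr{Z}}=\cl{\Un\scr{C}}=X$ ``because $\Un\scr{Z}$ meets every member of the maximal family $\scr{C}$.'' Meeting every member of a maximal cellular family does not imply density, and in fact $\Un\scr{Z}$ need not be dense: in $X=[0,1]$ the family $\{(0,1)\}$ is a maximal cellular family, and the cozero set $(1/4,1/2)$ meets its only member without being dense; in general, shrinking each $C\in\scr{C}$ to a small cozero set $Z_C$ destroys density of the union, and maximality of $\scr{C}$ does not transfer to $\scr{Z}$. This matters because the rest of your proof uses the density twice: you need $x\in\cl{\Un\scr{Z}}$ for every $x\in D=X\minus\Un\scr{Z}$ in order to apply countable tightness and produce $\scr{Z}_x$, and you need it for $\scr{Z}\un\{\cl{\Un\scr{Z}_x}:x\in D\}$ to be a cover of $X$. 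As written, the proof breaks at exactly this step.

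The gap is repairable within your scheme: either work from the start with a maximal pairwise disjoint family of nonempty cozero sets containing your refinement (in a Tychonoff space such a family automatically has dense union, since a point outside $\cl{\Un\scr{Z}}$ would have a nonempty cozero neighborhood missing every member, contradicting maximality), or keep your $\scr{Z}$ and add the open set $X\minus\cl{\Un\scr{Z}}$ to the cover, noting that each $Z\in\scr{Z}$ is open and disjoint from $X\minus\cl{\Un\scr{Z}}$, hence disjoint from its closure, so this extra set does not disturb the final disjointness argument. With either repair your adaptation of Theorem~\ref{ed}(a) goes through and proves the statement. Note that this is genuinely different from the paper's proof, which takes the uncountable cellular family of cozero sets $\{U_\alpha:\alpha<\omega_1\}$, forms the increasing clopen sets $V_\alpha=\cl{\Un\{U_\beta:\beta<\alpha\}}$ (using that countable unions of cozero sets are cozero and basic disconnectedness), uses countable tightness to conclude that $V=\Un\{V_\alpha:\alpha<\omega_1\}$ is clopen, hence regular-closed and therefore weakly Lindel\"of, and then observes that the increasing cover $\{V_\alpha:\alpha<\omega_1\}$ of $V$ has no countable subfamily with dense union; your (patched) route instead mirrors the star/weak-Lindel\"of extraction of Theorem~\ref{ed}, at the cost of the extra bookkeeping with cozero refinements.
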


\begin{proof}
Suppose that there exists a family $\{U_\alpha:\alpha<\omega_1\}$ of pairwise disjoint non-empty open sets. Of course, we 
may assume that each $U_\alpha$ is a cozero set. As the collection of cozero sets is closed under countable unions and $X$ 
is basically disconnected, for every $\alpha<\omega_1$ the set $V_\alpha=\cl{\Un\{U_\beta:\beta<\alpha\}}$ is clopen. 
Moreover, the countable tightness of $X$ guarantees that even the set $V=\Un\{V_\alpha:\alpha<\omega_1\}$ is clopen. 
Thus $V$ is weakly Lindel\"of as that property is hereditary on regular-closed sets. However, it is straightforward to see that 
no countable subfamily of the cover $\{V_\alpha:\alpha<\omega_1\}$ has dense union in $V$, which is a contradiction.
\end{proof}

It is stated in Problem 6L(8) in~\cite{por88} that if a Tychonoff space has countable cellularity, then it is an $F$-space if and 
only if it is extremally disconnected. As basically disconnected spaces are $F$-spaces, we have the following corollary.

\begin{corollary}
A weakly Lindel\"of Tychonoff space of countable tightness is basically disconnected if and only if it is extremally disconnected.
\end{corollary}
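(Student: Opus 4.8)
The plan is to derive both implications from the preceding theorem together with the cited characterization in \cite{por88}. The implication ``extremally disconnected $\Rightarrow$ basically disconnected'' needs no hypotheses beyond the Tychonoff assumption and has already been noted above: every cozero set is open, so if the closure of every open subset of $X$ is open, then in particular the closure of every cozero set is open.

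For the converse, suppose $X$ is weakly Lindel\"of, Tychonoff, of countable tightness, and basically disconnected. First I would invoke the theorem just established to conclude that $X$ has countable cellularity. Next I would use the fact, recorded just before that theorem, that every basically disconnected space is an $F$-space; thus $X$ is an $F$-space of countable cellularity. Finally I would apply Problem 6L(8) of \cite{por88}, which asserts that a Tychonoff space of countable cellularity is an $F$-space if and only if it is extremally disconnected. This immediately gives that $X$ is extremally disconnected, completing the equivalence.

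There is no real obstacle here: the corollary is a purely formal consequence of the cellularity bound proved above and the quoted result of Porter and Woods. The only point worth emphasizing is that the countable-cellularity conclusion of the preceding theorem is precisely what is needed to feed Problem 6L(8), so that the weakly Lindel\"of and countable-tightness hypotheses enter only through that theorem and play no direct role in the $F$-space/extremally-disconnected equivalence itself.
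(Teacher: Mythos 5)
Your proof is correct and follows exactly the paper's intended argument: the trivial implication from extremal disconnectedness, and for the converse the preceding theorem to get countable cellularity, the fact that basically disconnected spaces are $F$-spaces, and Problem 6L(8) of \cite{por88}. Nothing is missing.
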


%%%%%%%%%%%%%%%%
\section{Cardinal inequalities for spaces with a (strong) $G_\delta$-diagonal of rank 2}\label{S3}
Given a family of sets $\scr{B}$ and a set $A$ in a space $X$, the star of $\scr{B}$ around $A$ is the set 
$St(A,\scr{B})=\Un\{B\in\scr{B}:A\meet B\neq\es\}$. If $A=\{a\}$ we write $St(a,\scr{B})$.

A space $X$ has a \emph{$G_\delta$-diagonal} (\emph{strong $G_\delta$-diagonal}) provided that there exists a sequence 
$\{\scr{U}_n:n<\omega\}$ of open covers of $X$ such that for every $x\in X$ we have 
$\Meet\{St(x,\scr{U}_n):n<\omega\}=\{x\}$ 
$\left(\Meet\{\overline{St(x,\scr{U}_n)}:n<\omega\} = \{x\}\right)$.

A space $X$ has a \emph{$G_\delta$-diagonal of rank 2} (\emph{strong $G_\delta$-diagonal of rank 2}) provided that 
there exists a sequence $\{\scr{U}_n:n<\omega\}$ of open covers of $X$ such that  
$\Meet\{St(St(x,\scr{U}_n),\scr{U}_n):n<\omega\}=\{x\}$  
$\left(\Meet\{\overline{St(St(x,\scr{U}_n),\scr{U}_n)}:n<\omega\}=\{x\}\right)$, whenever $x\in X$.
Similarly, for every natural number $n$, spaces with \emph{$G_\delta$-diagonal of rank $n$} 
(\emph{strong $G_\delta$-diagonal of rank $n$}) are defined .

$X$ has a regular $G_\delta$-diagonal if the diagonal of $X$ is the intersection of countably many closed neighborhoods in 
$X\times X$. Equivalently, $X$ has a regular $G_\delta$-diagonal if there exists a sequence $\{\scr{U}_n:n<\omega\}$ of 
open covers of $X$ such that if $x, y\in X$, $x\neq y$, then there is  $n<\omega$ and sets $V_x,V_y\in\scr{U}_n$ 
containing $x$ and $y$, respectively, such that $V_y\cap\overline{St(V_x,\scr{U}_n)}=\emptyset$ \cite{Zenor72}.

If a space has a strong $G_\delta$-diagonal of rank 2, then it has a $G_\delta$-diagonal of rank 2 and a regular $G_\delta$-diagonal. On the other hand, both regular and rank 2 $G_\delta$-diagonal imply strong $G_\delta$-diagonal.

The two intermediate notions seems to play a parallel role. For instance, two remarkable facts are:

\begin{theorem}[{Buzyakova,~\cite{Buz06}}]\label{buz}
A ccc space with a regular $G_\delta$-diagonal has cardinality not exceeding $2^{\aleph_0}$.
\end{theorem}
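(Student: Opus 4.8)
The plan is to derive the bound from two separate steps and then combine them: (a) every \emph{separable} space with a regular $G_\delta$-diagonal has cardinality at most $2^{\aleph_0}$, and (b) every ccc space with a regular $G_\delta$-diagonal is separable.

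For (a), first note that the definition of a regular $G_\delta$-diagonal yields a sequence $\{\scr{U}_n:n<\omega\}$ of open covers of $X$ with $\Meet_{n<\omega}\overline{St(x,\scr{U}_n)}=\{x\}$ for every $x\in X$: given distinct $x,y$, take the index $n$ and the members $V_x\ni x$, $V_y\ni y$ of $\scr{U}_n$ provided by the definition; since $x\in V_x$ we have $St(x,\scr{U}_n)\sse St(V_x,\scr{U}_n)$, and as $V_y$ misses $\overline{St(V_x,\scr{U}_n)}$ we get $y\notin\overline{St(x,\scr{U}_n)}$. Now fix a countable dense set $D\sse X$ and define $f\colon X\to(\mathcal{P}(D))^\omega$ by $f(x)(n)=\overline{St(x,\scr{U}_n)}\meet D$. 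Since $St(x,\scr{U}_n)$ is open and $D$ is dense, $St(x,\scr{U}_n)\meet D$ is dense in $St(x,\scr{U}_n)$, so $\overline{St(x,\scr{U}_n)}$ is the closure of $St(x,\scr{U}_n)\meet D$, and hence also of the larger set $\overline{St(x,\scr{U}_n)}\meet D$; thus $\overline{St(x,\scr{U}_n)}$ is recovered from $f(x)(n)$. Therefore $f(x)=f(y)$ forces $\overline{St(x,\scr{U}_n)}=\overline{St(y,\scr{U}_n)}$ for all $n$, whence $\{x\}=\Meet_n\overline{St(x,\scr{U}_n)}=\Meet_n\overline{St(y,\scr{U}_n)}=\{y\}$; so $f$ is injective and $|X|\le|(\mathcal{P}(D))^\omega|=2^{\aleph_0}$.

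For (b), the idea is to use the countable chain condition to cut the covers down to countable pieces. For each $n$ fix a maximal cellular subfamily $\gamma_n\sse\scr{U}_n$; by ccc it is countable, and by maximality every member of $\scr{U}_n$ meets some member of $\gamma_n$, so $\Un\{St(V,\scr{U}_n):V\in\gamma_n\}=X$. Picking one point from each member of each $\gamma_n$ gives a countable set $D$, and the goal becomes $\overline{D}=X$. This is the step where I expect the real difficulty, because the star operation involved has rank $1$: if $p\notin\overline{D}$ then $p$ lies in $St(V_n,\scr{U}_n)$ for some $V_n\in\gamma_n$, but the closed stars $\overline{St(V_n,\scr{U}_n)}$ of these ``nearby'' sets need not intersect down to $\{p\}$, so they cannot by themselves confine $p$ to $\overline{D}$. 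Pushing this through seems to require choosing the families $\gamma_n$ coherently so that the relevant members genuinely converge to $p$, or running the construction inside an elementary submodel of size $2^{\aleph_0}$ (ccc keeping the relevant subfamilies countable, hence inside the model), or arguing by contradiction that a point outside $\overline{D}$, together with the covering property above and the identity $\Meet_n\overline{St(p,\scr{U}_n)}=\{p\}$, would produce an uncountable cellular family and violate ccc. Reconciling the regularity of the diagonal with the countable chain condition at this point is the crux.
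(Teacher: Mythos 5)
Your step (a) is correct and is the unproblematic half of the argument: a regular $G_\delta$-diagonal gives covers with $\bigcap_{n}\overline{St(x,\mathcal{U}_n)}=\{x\}$, the closure of the open set $St(x,\mathcal{U}_n)$ is determined by its trace on a dense set, and so a \emph{separable} space with a regular (indeed strong) $G_\delta$-diagonal has cardinality at most $2^{\aleph_0}$. The genuine gap is step (b), which you explicitly leave open and call the crux: it is not merely difficult, it is false. The Pixley--Roy hyperspace $PR(\mathbb{R})$ of the reals is a ccc space with a zero-set (hence regular $G_\delta$) diagonal that is not separable. It is ccc by the usual rational-interval argument; it is not separable because a countable family $D$ of finite sets cannot meet the open set $[\{x\},\mathbb{R}]$ when $x\notin\bigcup D$; and it is submetrizable because the Pixley--Roy topology refines the (metrizable) Vietoris topology on finite subsets of $\mathbb{R}$: if $F\subseteq\bigcup_{i\le m}V_i$ and $F\cap V_i\neq\emptyset$ for each $i$, then $[F,\bigcup_{i\le m}V_i]\subseteq\langle V_1,\dots,V_m\rangle$. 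Submetrizability yields a zero-set diagonal and hence a regular $G_\delta$-diagonal. So no refinement of your suggested devices (coherent choices of the families $\gamma_n$, elementary submodels, a cellular-family contradiction) can establish (b), and the proposed reduction of the theorem to the separable case cannot be completed.

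For comparison: the paper you are working from only cites this theorem from Buzyakova's article and does not reprove it, and Buzyakova's original argument (like Gotchev's later generalization $|X|\le 2^{c(X)\overline{\Delta}(X)}$ quoted in the introduction) never manufactures a countable dense set. Instead, the ccc is used to fix countably many ``reference'' open sets for each index (maximal cellular subfamilies of covers refining the diagonal sequence, which have dense unions), and each point is then coded by its relations to this fixed countable structure; injectivity of the coding uses the closed sets $\overline{U_n}$ from the regularity of the diagonal --- exactly the extra leverage that, as you yourself observe, rank-one stars alone do not provide. In short: keep (a), but the route through separability must be abandoned, since the intermediate statement is refuted by $PR(\mathbb{R})$.
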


\begin{theorem}[Bella,~\cite{Bel87}]\label{AB}
A ccc space with a $G_\delta$-diagonal of rank 2 has cardinality not exceeding $2^{\aleph_0}$.
\end{theorem}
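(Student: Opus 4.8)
The plan is to argue by contradiction, turning the rank-$2$ separation of the diagonal into an uncountable cellular family via the Erd\H{o}s--Rado partition theorem. The one fact about second stars I would isolate first is this: for an open cover $\scr{U}$ of $X$ and points $x,y\in X$, if $St(x,\scr{U})\meet St(y,\scr{U})\neq\es$, then $y\in St(St(x,\scr{U}),\scr{U})$. Indeed, a point $z$ in that intersection lies in some $U\in\scr{U}$ with $y\in U$, and $z$ lies in $St(x,\scr{U})$ as well, so $U\meet St(x,\scr{U})\neq\es$ and hence $U\sse St(St(x,\scr{U}),\scr{U})$, giving $y\in St(St(x,\scr{U}),\scr{U})$. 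Equivalently, $y\notin St(St(x,\scr{U}),\scr{U})$ forces $St(x,\scr{U})\meet St(y,\scr{U})=\es$, a disjointness condition that is plainly symmetric in $x$ and $y$.

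Next, fix a sequence $\{\scr{U}_n:n<\omega\}$ of open covers witnessing the $G_\delta$-diagonal of rank $2$, so that $\Meet\{St(St(x,\scr{U}_n),\scr{U}_n):n<\omega\}=\{x\}$ for every $x\in X$. Assume toward a contradiction that $|X|\geq(2^{\aleph_0})^+$, and pick $A\sse X$ with $|A|=(2^{\aleph_0})^+$. For distinct $x,y\in A$ the diagonal condition gives some $n$ with $y\notin St(St(x,\scr{U}_n),\scr{U}_n)$, and hence, by the first paragraph, with $St(x,\scr{U}_n)\meet St(y,\scr{U}_n)=\es$; let $f(\{x,y\})$ be the least such $n$. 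Since this disjointness condition is symmetric, $f:[A]^2\to\omega$ is a well-defined coloring.

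Now I would apply the Erd\H{o}s--Rado theorem $(2^{\aleph_0})^+\to(\aleph_1)^2_{\aleph_0}$ to obtain $B\sse A$ with $|B|=\aleph_1$ on which $f$ takes a constant value $n^*$. Then $St(x,\scr{U}_{n^*})\meet St(y,\scr{U}_{n^*})=\es$ for all distinct $x,y\in B$. Each $St(x,\scr{U}_{n^*})$ is nonempty---it contains a member of the cover $\scr{U}_{n^*}$ through $x$---and open, so $\{St(x,\scr{U}_{n^*}):x\in B\}$ is a pairwise disjoint family of nonempty open sets; disjointness forces its members to be distinct, so this is a cellular family of cardinality $\aleph_1$. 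That contradicts $c(X)=\aleph_0$, and therefore $|X|\leq 2^{\aleph_0}$.

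The real obstacle is the first step: one must see that with \emph{two}-step stars the separation coming from the diagonal already yields honest disjointness of \emph{one}-step stars, which is exactly what lets the ccc hypothesis be used. For a plain $G_\delta$-diagonal this fails---$y\notin St(x,\scr{U}_n)$ does not give $St(x,\scr{U}_n)\meet St(y,\scr{U}_n)=\es$---so rank $2$ is needed in an essential way. The remaining steps (the coloring, the Erd\H{o}s--Rado application, and extracting the contradiction from ccc) are routine.
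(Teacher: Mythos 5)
Your proof is correct: the key observation that $y\notin St(St(x,\scr{U}_n),\scr{U}_n)$ forces $St(x,\scr{U}_n)\cap St(y,\scr{U}_n)=\es$ is verified properly, and the Erd\H{o}s--Rado coloring then yields an uncountable cellular family contradicting ccc. The paper itself only cites this result (Bella, 1987) without proof, but your argument is exactly the technique the paper uses for its own Theorems~\ref{TG}, \ref{TG2} and~\ref{nscdc} --- indeed the coloring $F_n=\{\{x,y\}: St(x,\scr{U}_n)\cap St(y,\scr{U}_n)=\es\}$ in Theorem~\ref{nscdc} is precisely your disjointness condition --- so this is essentially the intended proof.
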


The above two results make the following questions very interesting:

\begin{question}\label{question}
{\rm (a)} Is the cardinality of an almost Lindel\"of space with a regular $G_\delta$-diagonal at most $2^{\aleph_0}$?

{\rm (b)} Is the cardinality of an almost Lindel\"of space with a $G_\delta$-diagonal of rank 2 at most $2^{\aleph_0}$?
\end{question}

In relation to Question \ref{question}(b) we can prove the following theorem. For its proof we need to introduce some 
notation and to recall one well-known result.

The symbol $[X]^2$ indicates the collection of all two-point subsets of $X$. The countable version of the Erd\"os-Rado 
Theorem is the following statement:

\begin{lemma}\label{ER}
If $X$ is a set with $|X|>2^{\aleph_0}$ and $[X]^2=\Un\{F_n:n\in\omega\}$, then there exists an uncountable set 
$S\sse X$ and an integer $n_0$ such that $[S]^2\sse F_{n_0}$.
\end{lemma}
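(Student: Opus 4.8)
The plan is to prove the partition relation $(2^{\aleph_0})^+\to(\omega_1)^2_\omega$ via the Erd\"os--Rado ramification (tree) argument. First I would reduce to a genuine coloring: define $c\colon[X]^2\to\omega$ by letting $c(\{x,y\})$ be the least $n$ with $\{x,y\}\in F_n$, so that any set $S$ on which $c$ is constant with value $n_0$ automatically satisfies $[X]^2\cap[S]^2\sse F_{n_0}$, i.e. $[S]^2\sse F_{n_0}$. Passing to a subset, I may assume $\card{X}=(2^{\aleph_0})^+$. The whole problem then reduces to producing an \emph{end-homogeneous} set $H=\{a_\xi:\xi<\omega_1\}$ of distinct points, well-ordered so that for all $\eta<\xi$ the color $c(\{a_\eta,a_\xi\})$ depends only on $\eta$. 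Indeed, given such an $H$, the assignment $\eta\mapsto c(\{a_\eta,a_\xi\})$ (for any $\xi>\eta$) is a well-defined function $\omega_1\to\omega$, and since $\omega_1$ is regular and uncountable it is constant, say equal to $n_0$, on an uncountable $I\sse\omega_1$; then $S=\{a_\eta:\eta\in I\}$ is as required, because $c(\{a_\eta,a_{\eta'}\})=n_0$ whenever $\eta<\eta'$ both lie in $I$.

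To build $H$ I would set up the ramification tree $T$ whose nodes are certain subsets of $X$. The root is $X$ itself; given a node $B$, let $m_B$ be its $\prec$-least element for a fixed well-ordering $\prec$ of $X$, and declare the immediate successors of $B$ to be the nonempty color classes $B_k=\{z\in B\minus\{m_B\}:c(\{m_B,z\})=k\}$ for $k<\omega$, while at a limit level a node is the intersection of the branch below it. Along any branch $X=B_0\supseteq B_1\supseteq\cdots$ the sequence of minima $m_{B_\al}$ is end-homogeneous: if $\al<\be$ then $m_{B_\be}\in B_\be\sse B_{\al+1}$, which forces $c(\{m_{B_\al},m_{B_\be}\})$ to equal the single color defining the branch at level $\al$. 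Hence a branch of length $\omega_1$ yields the desired end-homogeneous set, and everything comes down to showing that $T$ has such a branch.

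The main obstacle is exactly this last point: a descending $\omega$-chain of the $B_\al$'s can intersect to the empty set, so branches may die at countable limit levels, and it is here that the hypothesis $\card{X}>2^{\aleph_0}$ must enter. I would resolve it by a counting dichotomy. Each point $a\in X$ follows a unique branch, determined by the colors $c(\{m_{B_\al},a\})$, and stays in that branch until it is itself selected as a minimum. If some $a$ is never selected, it survives in its branch at every countable level, so that branch (its nodes always containing $a$, including at limits) never dies and has length $\omega_1$, and we are done. Otherwise every $a$ is the minimum of exactly one node, giving an injection of $X$ into the node set of $T$; but a node at level $\al<\omega_1$ is determined by the sequence of color-choices made below it, so each level has at most $\card{\omega^\al}\le 2^{\aleph_0}$ nodes, whence $\card{T}\le\omega_1\cdot 2^{\aleph_0}=2^{\aleph_0}$. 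This gives $\card{X}\le 2^{\aleph_0}$, contradicting $\card{X}=(2^{\aleph_0})^+$. Thus the desired branch exists, completing the construction of $H$. The delicate steps to verify carefully are that every point follows a single well-defined branch through limit levels and the level-by-level cardinality bound on $T$; the pigeonhole extraction of $S$ from $H$ and the reduction to a single coloring are routine.
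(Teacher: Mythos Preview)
Your argument is correct: this is the standard Erd\H{o}s--Rado ramification (tree) proof of $(2^{\aleph_0})^+\to(\omega_1)^2_\omega$, and the steps you flag as delicate (well-definedness of the branch followed by a point through limit levels, and the bound $|\omega^{\alpha}|\le 2^{\aleph_0}$ for $\alpha<\omega_1$ on the number of nodes per level) go through without difficulty. Note, however, that the paper does not supply a proof of this lemma at all; it merely states it as the countable case of the Erd\H{o}s--Rado theorem and invokes it as a known result, so there is no in-paper argument to compare your approach against.
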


\begin{theorem}\label{TG}
If $X$ is an almost Lindel\"of space with a strong $G_\delta$-diagonal of rank 2, then $|X| \le 2^{\aleph_0}$.
\end{theorem}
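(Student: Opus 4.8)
The plan is to assume, for contradiction, that $|X| > 2^{\aleph_0}$ and to use the Erd\"os-Rado Lemma~\ref{ER} to extract a large "monochromatic" subset on which one level of the diagonal sequence already separates points, and then to show that this contradicts the almost Lindel\"of property. Fix a sequence $\{\scr{U}_n : n < \omega\}$ of open covers witnessing the strong $G_\delta$-diagonal of rank 2, so that $\Meet\{\overline{St(St(x,\scr{U}_n),\scr{U}_n)} : n < \omega\} = \{x\}$ for every $x \in X$. For each pair $\{x,y\} \in [X]^2$, since $y \notin \Meet_n \overline{St(St(x,\scr{U}_n),\scr{U}_n)}$, there is some $n$ with $y \notin \overline{St(St(x,\scr{U}_n),\scr{U}_n)}$; symmetrically there is some $m$ with $x \notin \overline{St(St(y,\scr{U}_m),\scr{U}_m)}$. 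Put $\{x,y\}$ into $F_n$ for the least $n$ that works \emph{for both $x$ and $y$} (one can take $n$ to be the least index at which each of the two points is excluded from the doubled closed star of the other). This writes $[X]^2 = \Un\{F_n : n < \omega\}$.

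By Lemma~\ref{ER}, since $|X| > 2^{\aleph_0}$, there is an uncountable $S \sse X$ and an index $n_0$ with $[S]^2 \sse F_{n_0}$. Thus for any two distinct $x,y \in S$ we have $y \notin \overline{St(St(x,\scr{U}_{n_0}),\scr{U}_{n_0})}$ and, in particular, $y \notin St(St(x,\scr{U}_{n_0}),\scr{U}_{n_0})$. Now work inside the single open cover $\scr{V} = \scr{U}_{n_0}$. For each $x \in S$ choose $V_x \in \scr{V}$ with $x \in V_x$. The key observation I want is that the family $\{V_x : x \in S\}$ is, in a suitable sense, "spread out": if $x, y \in S$ are distinct, then $x \notin \overline{St(St(y,\scr{V}),\scr{V})}$, and since $St(V_y,\scr{V}) \sse St(St(y,\scr{V}),\scr{V})$ we get that no member of $\scr V$ meeting $V_y$ contains $x$, and indeed $\overline{V_x}$ can be shown to be disjoint from $St(V_y,\scr V)$. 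This is where the doubled star buys us separation of closures rather than just of points.

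Finally I invoke the almost Lindel\"of hypothesis. Consider the open cover $\scr{W}$ of $X$ consisting of $\scr{V}$ together with, say, $X \minus \overline{\{x\}}$ for each $x$ — or more efficiently, just apply $aL(X) = \aleph_0$ directly to $\scr V$: there is a countable $\scr{V}_0 \sse \scr{V}$ with $\Un\{\overline{V} : V \in \scr{V}_0\} = X$. Since $S$ is uncountable and $\scr V_0$ is countable, two distinct points $x, y \in S$ lie in $\overline{V}$ for a common $V \in \scr V_0 \sse \scr V$. But then $x \in \overline{V}$ with $V \meet V_y \neq \es$ would force (after passing to closures and using that $\scr V$ is a cover, so $x$ lies in some member meeting $V$) a contradiction with $x \notin \overline{St(St(y,\scr{V}),\scr{V})}$; the doubled star is exactly what absorbs the two closure operations coming from the almost Lindel\"of refinement. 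Hence $|X| \le 2^{\aleph_0}$.

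The main obstacle I anticipate is the bookkeeping in the last paragraph: matching the \emph{two} closure/star operations in $St(St(\cdot,\scr V),\scr V)$ against the one closure coming from $aL$ and the implicit star coming from "$x$ and $y$ both sit in $\overline V$ for a common $V$." One has to be careful that $x \in \overline{V}$, $y \in \overline{V}$, $x \in V_x$, $y \in V_y$ together genuinely place $y$ inside $\overline{St(St(x,\scr V),\scr V)}$ — this needs $V \meet V_x \ne \es$ and $V \meet V_y \ne \es$, which follows because $x \in \overline V \cap V_x$ and $y \in \overline V \cap V_y$. Once that chain is set up cleanly the contradiction is immediate, but stating the combinatorial coloring so that a single index $n_0$ simultaneously controls both points (and so that Erd\"os-Rado applies) is the step that requires the most care.
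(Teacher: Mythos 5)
Your argument is correct and reaches the contradiction by a route that differs from the paper's in its final step, in a way that is arguably more economical. Both proofs begin the same way: assume $|X|>2^{\aleph_0}$, color $[X]^2$ by an index $n$ at which the closed double stars of $\scr{U}_n$ separate the pair, and apply Lemma~\ref{ER} to obtain an uncountable $S\sse X$ and an index $n_0$ with $[S]^2\sse F_{n_0}$. From there the paper forms $\scr{V}=\{St(x,\scr{U}_{n_0}):x\in S\}$, checks $\cl{\bigcup\scr{V}}\sse\bigcup\{St(St(x,\scr{U}_{n_0}),\scr{U}_{n_0}):x\in S\}$, and shows that the open cover $\{St(St(x,\scr{U}_{n_0}),\scr{U}_{n_0}):x\in S\}\un\{X\setminus\cl{\bigcup\scr{V}}\}$ has no countable subfamily whose closures cover $X$. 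You instead apply $aL(X)=\aleph_0$ directly to the cover $\scr{U}_{n_0}$, pigeonhole the uncountable set $S$ into the countably many sets $\cl{V}$, $V\in\scr{V}_0$, and run the chain: $x\in V_x\in\scr{U}_{n_0}$ and $x\in\cl{V}$ give $V\meet V_x\neq\es$, hence $V\sse St(St(x,\scr{U}_{n_0}),\scr{U}_{n_0})$, hence $y\in\cl{V}\sse\cl{St(St(x,\scr{U}_{n_0}),\scr{U}_{n_0})}$, contradicting $\{x,y\}\in F_{n_0}$. This avoids the auxiliary $S$-indexed cover altogether, and since the same chain also gives $x\in\cl{St(St(y,\scr{U}_{n_0}),\scr{U}_{n_0})}$, your contradiction does not depend on any symmetry of the relation ``$y\notin\cl{St(St(x,\scr{U}_n),\scr{U}_n)}$'' at a fixed $n$.

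Two small points. First, you assign $\{x,y\}$ to ``the least $n$ that works for both''; for an arbitrary sequence of covers such a common $n$ need not exist, because the relation above is not symmetric at a fixed $n$ (the indices witnessing the two one-sided exclusions may differ). Either first replace each $\scr{U}_n$ by a common refinement of $\scr{U}_0,\dots,\scr{U}_n$, which preserves the strong rank~2 property and makes the closed double stars decrease in $n$, or simply define $F_n$ as the set of pairs $\{x,y\}$ with $y\notin\cl{St(St(x,\scr{U}_n),\scr{U}_n)}$ or $x\notin\cl{St(St(y,\scr{U}_n),\scr{U}_n)}$: this still covers $[X]^2$, and your final chain contradicts either disjunct. (The paper's parenthetical symmetry remark glosses over the same issue, and its argument actually needs one specific direction, so your version is the more robust of the two.) Second, the side claim in your middle paragraph that $\cl{V_x}$ is disjoint from $St(V_y,\scr{U}_{n_0})$ is not justified --- the hypothesis controls only the points $x$ and $y$, not all of $V_x$ --- but it is never used; the argument in your last paragraph stands without it.
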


\begin{proof}
Let $\{\scr{U}_n:n\in\omega\}$ be a sequence of open covers witnessing that $X$ has a strong $G_\delta$-diagonal of rank 
2 and suppose that $|X|>2^{\aleph_0}$. Put $F_n=\{\{x,y\}\in[X]^2: y\notin \cl{St(St(x,\scr{U}_n),\scr{U}_n)\}}$. (Note 
that $y\notin \cl{St(St(x,\scr{U}_n),\scr{U}_n)}$ if and only if $x\notin \cl{St(St(y,\scr{U}_n),\scr{U}_n)}$.)
Since $[X]^2=\Un\{F_n:n<\omega\}$, by Lemma~\ref{ER} there is an uncountable set $S\sse X$ and an integer $n_0$ 
such that $[S]^2\sse F_{n_0}$. Let $\scr{V}=\{St(x,\scr{U}_{n_0}):x\in S\}$. It is straightforward to verify that 
$\cl{\bigcup\scr{V}}\subset \bigcup\{St(St(x,\scr{U}_{n_0}),\scr{U}_{n_0}):x\in S\}$. Then 
$\{St(St(x,\scr{U}_{n_0}),\scr{U}_{n_0}):x\in S\}\cup (X\setminus\cl{\bigcup\scr{V}})$ is an open cover of $X$ which does not 
contain a countable subfamily $\scr{V}_0$ with the property that $X=\bigcup\{\cl{V}:V\in\scr{V}_0\}$. Since this contradicts the 
fact that $X$ is almost Lindel\"of, the proof is completed.
\end{proof}

In relation to the previous theorem we ask the following question:

\begin{question}
Is the cardinality of a weakly Lindel\"of space with a strong $G_\delta$-diagonal of rank 2 at most $2^{\aleph_0}$?
\end{question}

Recently Gotchev obtained the following result:

\begin{theorem}[\cite{Got}] 
A weakly Lindel\"of space with a regular $G_\delta$-diagonal has cardinality not exceeding $2^{\mathfrak{c}}$.
\end{theorem}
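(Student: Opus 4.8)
The plan is to follow the same template that worked for Theorem~\ref{TG}, but now using the regular $G_\delta$-diagonal hypothesis in place of the rank-2 hypothesis, and replacing the almost Lindel\"of degree with the weak Lindel\"of degree. Let $\{\scr{U}_n:n<\omega\}$ be a sequence of open covers witnessing that $X$ has a regular $G_\delta$-diagonal, and suppose toward a contradiction that $|X|>2^{\mathfrak c}$. The natural partition to build is the one encoded in the regular diagonal condition: for each pair $\{x,y\}\in[X]^2$ with $x\neq y$ there is some $n$ and sets $V_x,V_y\in\scr{U}_n$ with $x\in V_x$, $y\in V_y$, and $V_y\cap\overline{St(V_x,\scr{U}_n)}=\es$. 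The asymmetry between $x$ and $y$ is a nuisance for a symmetric partition of $[X]^2$, so the first step is to symmetrize: declare $\{x,y\}\in F_n$ if there exist $V_x,V_y\in\scr{U}_n$ with $x\in V_x$, $y\in V_y$ and either $V_y\cap\overline{St(V_x,\scr{U}_n)}=\es$ or $V_x\cap\overline{St(V_y,\scr{U}_n)}=\es$. Then $[X]^2=\Un\{F_n:n<\omega\}$.

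Since $|X|>2^{\mathfrak c}$ is too large for the countable Erd\H{o}s--Rado lemma as stated (Lemma~\ref{ER} only needs $|X|>2^{\aleph_0}$, so it certainly applies), by Lemma~\ref{ER} there is an uncountable $S\sse X$ and an integer $n_0$ with $[S]^2\sse F_{n_0}$. Now I want to extract from $S$ an open cover of $X$ admitting no countable subfamily with dense union, contradicting weak Lindel\"ofness. For each $x\in S$ choose an open set $W_x\in\scr{U}_{n_0}$ containing $x$; if possible choose the $W_x$ so that the separating witness works uniformly, but a cleaner route is to recall that the family $\{St(x,\scr{U}_{n_0}):x\in S\}$ together with a single open set $X\setminus\overline{\bigcup\{St(x,\scr{U}_{n_0}):x\in S\}}$ covers $X$ provided the closure of that union misses enough points --- exactly as in the proof of Theorem~\ref{TG}. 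Concretely I would set $\scr V=\{St(x,\scr{U}_{n_0}):x\in S\}$ and argue that if $y\in S$ then $y\notin\overline{St(x,\scr{U}_{n_0})}$ for all $x\in S\setminus\{y\}$ (using that $\{x,y\}\in F_{n_0}$ and the definition of the regular diagonal sequence, after absorbing the symmetrization), so that distinct points of $S$ cannot all be captured by densely many stars; more precisely, the cover $\scr V\cup\{X\setminus\overline{\bigcup\scr V}\}$ has the property that any subfamily with dense union must contain $St(x,\scr{U}_{n_0})$ for all but... -- here one has to be careful -- at least uncountably many $x\in S$, which is impossible for a countable subfamily.

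The hard part, and the place where the regular diagonal and the rank-2 diagonal genuinely diverge, is getting a clean ``only one point escapes'' statement. With rank 2, the second star $St(St(x,\scr{U}_{n_0}),\scr{U}_{n_0})$ is available as an enlargement and one uses $\overline{\bigcup\scr V}\subset\bigcup\{St(St(x,\scr{U}_{n_0}),\scr{U}_{n_0}):x\in S\}$; with only a regular diagonal there is no second-star containment, so I expect the main obstacle to be choosing the right open cover. The fix I would pursue: for each $x\in S$, by the regular diagonal property applied to the pair $\{x,y\}$ as $y$ ranges over $S$, one does not immediately get a single neighborhood of $x$ separated from all other points of $S$, but one can instead use a counting/closing-off argument inside $S$ -- or, more promisingly, observe that $S$ with the subspace topology has a regular $G_\delta$-diagonal and is in fact such that every point is isolated from the rest of $S$ via one of the covers $\scr{U}_{n_0}$, making $S$ (or a suitable refinement) behave like a discrete-ish set whose points require pairwise-many open sets to cover. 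If that direct argument stalls, the fallback is to push the hypothesis $|X|>2^{\mathfrak c}$ harder: apply the full (uncountable) Erd\H{o}s--Rado theorem to a finer partition of $[X]^2$ into $2^{\aleph_0}$ pieces indexed by pairs $(n,k)$ where $k$ codes which element of a fixed well-ordering of $\scr{U}_n$ of size $\le\ldots$ serves as the witness -- but since $|\scr{U}_n|$ is unbounded this needs the weak Lindel\"ofness first to shrink each $\scr{U}_n$ to a subfamily of size $\le 2^{\mathfrak c}$ with dense union, after which the index set has size $2^{\mathfrak c}$ and $|X|>2^{\mathfrak c}$ feeds Erd\H{o}s--Rado to give a homogeneous set on which a single pair of open sets does all the separating, and the contradiction with weak Lindel\"ofness is then immediate and clean.
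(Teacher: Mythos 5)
Your proposal does not close the gap, and it is worth noting first that the paper itself offers no proof of this statement: it is quoted from~\cite{Got}, where it is a corollary of the inequality $|X|\le 2^{\overline{\Delta}(X)\cdot 2^{wL(X)}}$, whose proof is not an Erd\H{o}s--Rado homogeneous-set argument at all but an encoding of points by their interaction with countable subfamilies of the covers $\scr{U}_n$ having dense union (this is exactly where the exponent $2^{wL(X)}=\mathfrak{c}$, rather than $\aleph_0$, comes from). Your primary plan transplants the scheme of Theorem~\ref{TG}, and it breaks precisely at the contradiction step. In Theorem~\ref{TG} the hypothesis is \emph{almost} Lindel\"of: one gets a countable subfamily whose \emph{closures cover} $X$, and the strong rank~2 structure guarantees that the closure of each double star captures at most one point of $S$ while the closure of the leftover set $X\setminus\cl{\bigcup\scr{V}}$ misses $S$; that is what makes a countable subfamily impossible. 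Under weak Lindel\"ofness you only obtain a countable subfamily with \emph{dense union}, and a dense union has no obligation to contain, or even to have closure containing, any point of $S$ --- so no contradiction arises even if every point of $S$ were separated from every other star. Moreover, without a second star the family $\scr{V}\cup\{X\setminus\cl{\bigcup\scr{V}}\}$ is not a cover of $X$ (it omits the boundary $\cl{\bigcup\scr{V}}\setminus\bigcup\scr{V}$), which is exactly the role the rank~2 hypothesis plays in Theorem~\ref{TG}. Tellingly, the paper poses as an \emph{open question} whether a weakly Lindel\"of space with a strong $G_\delta$-diagonal of rank 2 has cardinality at most $2^{\aleph_0}$; if your first route worked it would settle that question and would in fact prove the bound $2^{\aleph_0}$ here, which is stronger than the stated $2^{\mathfrak{c}}$ --- a strong indication the mechanism cannot be as you describe. (There is also the smaller point that your symmetrized colouring only yields, for each pair in $S$, \emph{one} of the two separations, not ``$y\notin\cl{St(x,\scr{U}_{n_0})}$ for all $x\in S\setminus\{y\}$''; this could be repaired with extra colours coding the orientation, but it is not the main obstruction.)

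Your fallback is also not viable as stated. To colour pairs by ``which witness works'' with only $\mathfrak{c}$ many colours you must restrict the witnesses to fixed families of size at most $\mathfrak{c}$; but the subfamilies produced by weak Lindel\"ofness have dense union, not full union, so for a given pair $\{x,y\}$ the shrunken family $\scr{V}_{n}$ may contain no set containing $x$ and no set containing $y$, and then your colouring is undefined on that pair. You give no way to handle such pairs, and the concluding claim that homogeneity then contradicts weak Lindel\"ofness ``immediately'' is asserted rather than proved. In short: the plan identifies the right difficulty (no second-star enlargement, and dense union versus closed covers) but does not overcome it, whereas the cited proof in~\cite{Got} proceeds by a genuinely different, coding-type argument.
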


The latter result suggests the following:

\begin{question}
Is the cardinality of a weakly Lindel\"of space with a $G_\delta$-diagonal of rank 2 at most $2^{\mathfrak{c}}$?
\end{question}

Here we will show that for the class of normal spaces even a stronger inequality is true (see Corollary \ref{CBC}).

Notice that not every space with a $G_\delta$-diagonal of rank 2 has a regular $G_\delta$-diagonal. For example, every 
Moore space has a $G_\delta$-diagonal of rank 2 \cite{ABu}, while not every Moore space has a regular $G_\delta$-diagonal 
\cite{Gru84}. Also, by a theorem of McArthur~\cite{McA} every pseudocompact space with a regular $G_\delta$-diagonal is 
compact, while in~\cite{ABu} it is pointed out that the Mrowka space, which is a pseudocompact and not countably compact 
space, has a $G_\delta$-diagonal of rank 2. (Indeed, if 
$X=\omega\un\scr{A}$, where $\scr{A}$ is a MAD family in $\omega$, by letting 
$\scr{U}_n=\{\{k\}:k\in\omega\}\un\{\{A\}\un A\minus n:A\in\scr{A}\}$, we obtain a $G_\delta$-sequence of rank 2.)

Therefore, by McArthur's result, the above space also witnesses that $G_\delta$-diagonal of rank 2 does not imply regular 
$G_\delta$-diagonal. However, this argument does not work for normal spaces. A normal pseudocompact space is countably 
compact and Chaber's Theorem~\cite{EN} shows that within the class of countably compact spaces even 
$G_\delta$-diagonal of rank 1 suffices to imply compactness. This makes very interesting the following question:

\begin{question} Does there exist a normal space with a $G_\delta$-diagonal of rank 2 which does not have a regular 
$G_\delta$-diagonal?
\end{question}

The converse implication remains fully open:

\begin{question}[\cite{ABe}]
Is it true that every space with a regular $G_\delta$-diagonal also has a $G_\delta$-diagonal 
of rank 2?
\end{question}

We recall that a collection of non-empty open sets in $X$ is a \emph{discrete cellular family} if every $x\in X$ has an open 
neighborhood which intersects at most one of the members of the family.
A space $X$ has \emph{countable discrete cellularity} (briefly, cdc) if every discrete family of open sets in $X$ is 
countable. We give two proofs that every weakly Lindel\"of space is cdc.

\begin{lemma}
Every weakly Lindel\"of space is cdc.
\end{lemma}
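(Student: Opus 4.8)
The plan is to prove directly that any discrete cellular family in a weakly Lindel\"of space must be countable. So suppose $\scr{V}=\{V_\alpha:\alpha<\kappa\}$ is a discrete family of non-empty open sets in a weakly Lindel\"of space $X$, and aim to show $\kappa\leq\omega$. For each point $x\in X$, discreteness gives an open neighborhood $W_x$ meeting at most one member of $\scr{V}$; the collection $\{W_x:x\in X\}$ is an open cover of $X$, so by weak Lindel\"ofness there is a countable subfamily $\{W_{x_n}:n<\omega\}$ whose union $D$ is dense in $X$.

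The key observation is then that every $V_\alpha$, being open and non-empty, must meet the dense set $D$, hence must meet some $W_{x_n}$. Define a map $f\colon\kappa\to\omega$ by sending $\alpha$ to the least $n$ with $V_\alpha\meet W_{x_n}\neq\es$. Since each $W_{x_n}$ meets at most one member of the discrete family $\scr{V}$, the preimage $f^{-1}(n)$ has at most one element, so $f$ is injective. Therefore $\kappa=|\scr{V}|\leq\omega$, which is exactly the statement that $X$ is cdc.

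I do not expect any real obstacle here; the argument is short and elementary, the only point requiring a moment's care being that a discrete family is in particular locally finite (indeed each $W_x$ meets at most one member), which is immediate from the definition of discrete cellular family recalled just above the lemma. The excerpt promises a second proof as well; an alternative would be to note that a discrete family $\{V_\alpha:\alpha<\kappa\}$ together with the open set $X\minus\cl{\Un\{V_\alpha:\alpha<\kappa\}}$ forms an open cover, observe that no countable subfamily can have dense union once $\kappa>\omega$ (since omitting even one $V_\beta$ leaves $V_\beta$ disjoint from the closure of the union of the rest, using discreteness to see $\cl{\Un_{\alpha\neq\beta}V_\alpha}\meet V_\beta=\es$), and conclude $\kappa\leq\omega$ from weak Lindel\"ofness.
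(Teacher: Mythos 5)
Your main argument is correct, and it takes a genuinely different route from the paper's two proofs. The paper argues by contradiction in both cases: its first proof works inside the regular-closed set $\cl{\Un\scr{U}}$, adjoins for each boundary point of $\cl{U_\alpha}\setminus U_\alpha$ an auxiliary open set meeting only $U_\alpha$, and invokes the fact that weak Lindel\"ofness is inherited by regular-closed sets; its second proof covers $X$ by the open sets $X\setminus\cl{\Un\scr{U}_\beta}$ obtained by omitting one member of the family at a time. You instead apply weak Lindel\"ofness once, to the canonical cover $\{W_x:x\in X\}$ by neighborhoods witnessing discreteness, and then inject the discrete family into the countable subfamily via density of its union. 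This is shorter, requires no contradiction and no heredity of $wL$ on regular-closed sets, and, replacing $\omega$ by $wL(X)$, it yields at once the quantitative statement $dc(X)\le wL(X)$ that the paper records only afterwards by rescaling its proof to $\kappa^+$. (The one pedantic point is that injectivity of your map $f$ uses that distinct indices label distinct members of the family, which is the standing convention when counting a cellular family.)

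Your closing side remark, however, is flawed as stated: $\{V_\alpha:\alpha<\kappa\}\cup\{X\setminus\cl{\Un_{\alpha}V_\alpha}\}$ need not cover $X$, since points of $\cl{\Un_{\alpha}V_\alpha}\setminus\Un_{\alpha}V_\alpha$ (for instance, endpoints of a discrete family of open intervals in $\R$) lie in no member of it. This is precisely the obstruction the paper's two proofs are engineered to avoid: the first by adding, for each such boundary point, an open set meeting only one member and passing to the regular-closed set $\cl{\Un\scr{U}}$; the second by using instead the sets $X\setminus\cl{\Un\scr{U}_\beta}$, which do cover $X$ by discreteness. Since this remark plays no role in your main argument, it does not affect the correctness of your proof, but it should be dropped or repaired along one of those lines.
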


\begin{proof} {\bf First proof:}
Let $X$ be a weakly Lindel\"of space and suppose that there exists a discrete family of open sets 
$\scr{U}=\{U_\alpha:\alpha<\omega_1\}$. Since $\scr{U}$ is discrete, for every $x\in X$ we can find an open 
neighborhood $V_x$ such that $V_x$ intersects at most one of the members of $\scr{U}$. Therefore 
$\cl{\bigcup\scr{U}}=\bigcup\{\cl{U_\alpha}:\alpha<\omega_1\}$, hence $\cl{\bigcup\scr{U}}$ is a regular closed subset 
of $X$. For every $\alpha<\omega_1$ and every $x\in \cl{U_\alpha}\setminus U_\alpha$ we fix a non-empty open set 
$V_x$ such that $V_x\cap U_\beta=\emptyset$ for every $\beta\ne\alpha$. Let $\scr{V}$ be the family of all such sets 
$V_x$. Then $\scr{U}\cup\scr{V}$ is an open cover of $\cl{\bigcup\scr{U}}$ which clearly does not have a countable 
subcolection which is dense in $\cl{\bigcup\scr{U}}$ -- contradiction.

{\bf Second proof:} Let $X$ be a weakly Lindel\"of space and suppose that there exists a discrete family of open sets 
$\scr{U}=\{U_\alpha:\alpha<\omega_1\}$. Since $\scr{U}$ is discrete, 
$\cl{\bigcup\scr{U}}=\bigcup\{\cl{U_\alpha}:\alpha<\omega_1\}$. Even more, if 
$\scr{U}_\beta=\{U_\alpha:\alpha<\omega_1, \alpha\ne\beta\}$, then 
$\cl{\bigcup\scr{U}_\beta}=\bigcup\{\cl{U_\alpha}:\alpha<\omega_1,\alpha\ne\beta\}$ and 
$\cl{\bigcup\scr{U}_\beta}\cap \cl{U_\beta}=\emptyset$.
Then for each $\beta<\omega_1$, the set $X\setminus \cl{\bigcup\scr{U}_\beta}$ is open and contains $\cl{U_\beta}$.
Thus $\{X\setminus \cl{\bigcup\scr{U}_\beta}:\beta<\omega_1\}$ is an open cover of $X$ which clearly does not have a 
countable subcolection which is dense in $X$ -- contradiction.
\end{proof}

Now let $X$ be a topological space and $\kappa=wL(X)$. By replacing $\omega_1$ in the above proof with $\kappa^+$ 
one can prove the following:

\begin{theorem}\label{TDC}
Let $X$ be a topological space and $\kappa=wL(X)$. Then every discrete cellular family in $X$ has cardinality at most 
$\kappa$.
\end{theorem}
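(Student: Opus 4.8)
The plan is to mimic the second proof of the preceding lemma verbatim, replacing $\omega_1$ by $\kappa^+$ where $\kappa = wL(X)$. Suppose, toward a contradiction, that $\scr{U} = \{U_\alpha : \alpha < \kappa^+\}$ is a discrete cellular family in $X$. First I would record that, by discreteness, the closure of any subunion of $\scr{U}$ is the union of the closures: for any $I \sse \kappa^+$ we have $\cl{\Un\{U_\alpha : \alpha \in I\}} = \Un\{\cl{U_\alpha} : \alpha \in I\}$. The key point is that each $x \in X$ has a neighborhood meeting at most one $U_\alpha$, so a net converging to $x$ is eventually inside (the neighborhood hence inside) that single $U_\alpha$, which forces $x \in \cl{U_\alpha}$ for that $\alpha$; this gives the $\sse$ inclusion, and the reverse is trivial.

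Next, for each fixed $\beta < \kappa^+$ put $\scr{U}_\beta = \{U_\alpha : \alpha < \kappa^+,\ \alpha \ne \beta\}$. By the displayed identity, $\cl{\Un\scr{U}_\beta} = \Un\{\cl{U_\alpha} : \alpha \ne \beta\}$, and I would check that this set is disjoint from $\cl{U_\beta}$: if $x \in \cl{U_\beta} \meet \cl{U_\alpha}$ for some $\alpha \ne \beta$, then again choosing a neighborhood $V_x$ of $x$ that meets at most one member of $\scr{U}$, we find $V_x$ meets both $U_\beta$ and $U_\alpha$ (since $x$ is in both closures), a contradiction. Consequently $W_\beta := X \setminus \cl{\Un\scr{U}_\beta}$ is an open set containing $\cl{U_\beta}$, in particular $W_\beta$ is non-empty and $W_\beta \supseteq U_\beta$.

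Now $\{W_\beta : \beta < \kappa^+\}$ is an open cover of $X$ (each point of $X$ lies in some $\cl{U_\alpha}$ by the first identity, hence in $W_\alpha$). Since $wL(X) = \kappa$, some subfamily $\{W_\beta : \beta \in J\}$ with $|J| \le \kappa$ has dense union. Pick $\gamma \in \kappa^+ \setminus J$, which exists as $|J| \le \kappa < \kappa^+$. Then $U_\gamma$ is a non-empty open set, and for every $\beta \in J$ we have $W_\beta \meet U_\gamma = \es$, because $U_\gamma \sse \cl{\Un\scr{U}_\beta}$ (as $\gamma \ne \beta$), so $U_\gamma$ is disjoint from $X \setminus \cl{\Un\scr{U}_\beta} = W_\beta$. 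Hence $\Un\{W_\beta : \beta \in J\}$ misses the non-empty open set $U_\gamma$ and therefore is not dense, a contradiction. Therefore no discrete cellular family of size $\kappa^+$ exists, i.e. every discrete cellular family in $X$ has cardinality at most $\kappa$.

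I do not anticipate a genuine obstacle here; the only thing requiring a little care is the two uses of discreteness — once to get $\cl{\Un\scr{U}} = \Un\cl{U_\alpha}$ (and its analogues for subfamilies), and once to get $\cl{U_\beta}$ disjoint from $\cl{\Un\scr{U}_\beta}$ — and the trivial but essential cardinality remark that $\kappa^+ \setminus J \ne \es$ whenever $|J| \le \kappa$.
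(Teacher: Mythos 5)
Your argument is exactly the paper's intended proof: it reproduces the second proof of the preceding lemma with $\omega_1$ replaced by $\kappa^+$, which is precisely the substitution the paper prescribes, and it is correct. One small slip in your parenthetical justification: not every point of $X$ need lie in some $\cl{U_\alpha}$ (the family $\scr{U}$ need not have dense union), but the covering claim still holds because any point outside $\cl{\bigcup\scr{U}}$ lies in $W_\beta = X\setminus\cl{\bigcup\scr{U}_\beta}$ for every $\beta$, while a point of some $\cl{U_\alpha}$ lies in $W_\alpha$ by the disjointness you established.
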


In relation to the above observation the following cardinal function could be introduced.

\begin{definition}
The \emph{discrete cellularity} of $X$ is $dc(X):=\sup\{|\scr{U}|:\scr{U}$ is a discrete cellular family in $X\}+\omega$. If 
$dc(X)=\omega$, then it is called that $X$ has \emph{countable discrete celularity} ($X$ has \emph{cdc} or $X$ is a \emph{cdc} space). 
\end{definition}

Using the cardinal function $dc(X)$, Theorem \ref{TDC} could be restated as follows:

\begin{theorem}
For every topological space $X$, $dc(X)\le wL(X)$.
\end{theorem}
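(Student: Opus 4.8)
The plan is to derive this directly from Theorem~\ref{TDC}, which asserts that every discrete cellular family in $X$ has cardinality at most $\kappa = wL(X)$. Recall that by definition $dc(X) = \sup\{|\scr{U}| : \scr{U} \text{ is a discrete cellular family in } X\} + \omega$, so the inequality $dc(X) \le wL(X)$ amounts to two things: first, that each discrete cellular family has size at most $wL(X)$, and second, that the supremum-plus-$\omega$ does not exceed $wL(X)$.

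First I would invoke Theorem~\ref{TDC} to conclude that for every discrete cellular family $\scr{U}$ in $X$ we have $|\scr{U}| \le \kappa = wL(X)$. Taking the supremum over all such families gives $\sup\{|\scr{U}| : \scr{U} \text{ discrete cellular in } X\} \le wL(X)$. Then I would add $\omega$ to both sides; since $wL(X)$ is by definition an infinite cardinal (it is the least infinite cardinal with the covering property), we have $wL(X) \ge \omega$, so $wL(X) + \omega = wL(X)$. Combining, $dc(X) = \sup\{\cdots\} + \omega \le wL(X) + \omega = wL(X)$, which is exactly the claimed inequality.

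There is really no obstacle here: the statement is a cosmetic reformulation of Theorem~\ref{TDC} in the language of the newly introduced cardinal function $dc(X)$, the only minor point being the observation that the additive $+\omega$ in the definition of $dc(X)$ is absorbed because $wL(X)$ is infinite. For completeness one might note that the supremum over discrete cellular families is attained or at least bounded as a cardinal (there is no issue with the supremum of a set of cardinals each bounded by $\kappa$ being itself bounded by $\kappa$), so no subtlety about cofinality arises.

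\begin{proof}
Let $\kappa = wL(X)$. By Theorem~\ref{TDC}, every discrete cellular family $\scr{U}$ in $X$ satisfies $|\scr{U}| \le \kappa$. Hence $\sup\{|\scr{U}| : \scr{U} \text{ is a discrete cellular family in } X\} \le \kappa$. Since $wL(X)$ is an infinite cardinal, $\kappa + \omega = \kappa$, and therefore
\[
dc(X) = \sup\{|\scr{U}| : \scr{U} \text{ is a discrete cellular family in } X\} + \omega \le \kappa + \omega = \kappa = wL(X).
\]
\end{proof}
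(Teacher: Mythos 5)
Your argument is correct and matches the paper exactly: the paper presents this theorem as a direct restatement of Theorem~\ref{TDC} in terms of the newly defined cardinal function $dc(X)$, which is precisely your derivation. The observation that the additive $\omega$ is absorbed because $wL(X)$ is infinite is the only point needing mention, and you handle it properly.
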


To formulate our next results in a more general setting, recall that a space $X$ is \emph{star--$\scr{P}$} if for every open 
cover $\scr{U}$ of $X$ there exists a subspace $Y$ of $X$ with the property $\scr{P}$ such that $X=St(Y,\scr{U})$.

First, by strengthening one of the hypothesis in Theorem \ref{TG} and weakening the other we can prove the following:

\begin{theorem}\label{TG2}
If $X$ is a star-cdc space with a $G_\delta$-diagonal of rank 3, then $|X| \le 2^{\aleph_0}$.
\end{theorem}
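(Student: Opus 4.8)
The plan is to mimic the proof of Theorem~\ref{TG}, replacing the Erd\H{o}s--Rado partition of $[X]^2$ by a partition of $[X]^3$ into countably many pieces and then invoking the countable version of the Erd\H{o}s--Rado theorem in the form $(2^{\aleph_0})^+\to(\aleph_1)^3_\omega$ (which follows from Lemma~\ref{ER} by a standard iteration, or can be quoted directly). So first I would fix a sequence $\{\scr{U}_n:n\in\omega\}$ of open covers witnessing the rank~3 $G_\delta$-diagonal, i.e. $\bigcap_n St(St(St(x,\scr{U}_n),\scr{U}_n),\scr{U}_n)=\{x\}$ for every $x$, and assume for contradiction that $|X|>2^{\aleph_0}$. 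For a triple $\{x,y,z\}\in[X]^3$ I would want to assign a colour $n$ so that the three points are ``$\scr{U}_n$-far apart'' in a way that survives taking stars; the natural choice, echoing the rank~2 argument, is to put $\{x,y,z\}$ into $F_n$ when each of the three points lies outside $St(St(w,\scr{U}_n),\scr{U}_n)$ for the other two points $w$ (equivalently, no two of the three points are joined by a ``double star'' of $\scr{U}_n$). One must check this assignment covers all of $[X]^3$: if $\{x,y,z\}$ belonged to no $F_n$, then for each $n$ some pair is double-star-$\scr{U}_n$-close, and by pigeonhole one fixed pair, say $\{x,y\}$, is double-star close for infinitely many $n$; but that forces $y\in\bigcap_{n} St(St(x,\scr{U}_n),\scr{U}_n)\supseteq\bigcap_n St(St(St(x,\scr{U}_n),\scr{U}_n),\scr{U}_n)=\{x\}$, a contradiction.

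**The homogeneous set and the open cover.**
Having obtained from Erd\H{o}s--Rado an uncountable $S\subseteq X$ and an integer $n_0$ with $[S]^3\subseteq F_{n_0}$, the point of the rank~3 (rather than rank~2) hypothesis is that pairwise separation for triples gives extra room: for any two distinct $x,y\in S$, choosing a third point $z\in S$ shows (from the definition of $F_{n_0}$ applied to $\{x,y,z\}$) that $y\notin St(St(x,\scr{U}_{n_0}),\scr{U}_{n_0})$, and in fact one gets the strengthened statement that $y\notin St(St(x,\scr{U}_{n_0}),\scr{U}_{n_0})$ together with enough separation that $St(x,\scr{U}_{n_0})\cap St(y,\scr{U}_{n_0})$-type intersections can be controlled. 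I would then set $\scr{V}=\{St(x,\scr{U}_{n_0}):x\in S\}$ and verify that the closure of its union is contained in $\bigcup\{St(St(x,\scr{U}_{n_0}),\scr{U}_{n_0}):x\in S\}$, exactly as in Theorem~\ref{TG}; the key point where rank~3 is used is to guarantee that the family $\scr{W}=\{St(St(x,\scr{U}_{n_0}),\scr{U}_{n_0}):x\in S\}$ is a \emph{discrete} family of open sets in $X$ --- given any $p\in X$, the neighbourhood $St(p,\scr{U}_{n_0})$ meets at most one member of $\scr{W}$, because if it met $St(St(x,\cdot),\cdot)$ and $St(St(y,\cdot),\cdot)$ for distinct $x,y\in S$ then $x$ and $y$ would be linked by a triple star of $\scr{U}_{n_0}$, i.e. $y\in St(St(St(x,\scr{U}_{n_0}),\scr{U}_{n_0}),\scr{U}_{n_0})$, contradicting $[S]^3\subseteq F_{n_0}$ (using that the rank~3 condition forbids even this, via one more auxiliary point $z\in S$).

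**Deriving the contradiction.**
Now consider the open cover $\scr{W}\cup\{X\setminus\overline{\bigcup\scr{V}}\}$ of $X$. Since $X$ is star-cdc, applied to this cover there is a subspace $Y\subseteq X$ with $dc(Y)=\omega$ (equivalently $Y$ is cdc) such that $X=St(Y,\scr{W}\cup\{X\setminus\overline{\bigcup\scr{V}}\})$. The set $X\setminus\overline{\bigcup\scr{V}}$ is disjoint from every member of $\scr{W}$ (indeed $St(x,\scr{U}_{n_0})\subseteq\overline{\bigcup\scr{V}}$ for each $x\in S$, so the closure lies inside it), so each $x\in S$ with its witnessing point of $St(St(x,\scr{U}_{n_0}),\scr{U}_{n_0})\cap Y$ forces $Y$ to meet uncountably many members of the discrete family $\scr{W}$; but then $\{W\in\scr{W}:W\cap Y\neq\es\}$ restricted to $Y$ is an uncountable discrete cellular family in $Y$, contradicting $dc(Y)=\omega$. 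I expect the main obstacle to be pinning down the exact combinatorial inequalities: making sure the three-point colouring $F_n$ is simultaneously (i) exhaustive of $[X]^3$ and (ii) strong enough that $[S]^3\subseteq F_{n_0}$ yields the \emph{discreteness} of $\scr{W}$ rather than merely pairwise disjointness --- this is precisely where rank~3 is needed and where the argument differs nontrivially from the rank~2 case, so the colouring must be chosen so that membership of a triple in $F_n$ encodes separation of each pair even after one extra star is applied.
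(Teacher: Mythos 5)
There is a genuine gap, and it sits at the very first step. The partition relation you need, $(2^{\aleph_0})^+\to(\aleph_1)^3_\omega$, does \emph{not} follow from Lemma~\ref{ER} ``by a standard iteration'': iterating the Erd\H{o}s--Rado argument raises the exponent at the cost of an exponential in the cardinal, giving only $(2^{2^{\aleph_0}})^+\to(\aleph_1)^3_\omega$; the Erd\H{o}s--Rado bound for triples is essentially optimal, so the relation at $(2^{\aleph_0})^+$ is not available. Consequently, from the assumption $|X|>2^{\aleph_0}$ you cannot extract an uncountable set homogeneous for a colouring of $[X]^3$; your scheme would at best yield $|X|\le 2^{2^{\aleph_0}}=2^{\mathfrak c}$, which is weaker than the theorem. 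A second, independent problem is the discreteness claim for $\scr{W}=\{St(St(x,\scr{U}_{n_0}),\scr{U}_{n_0}):x\in S\}$: your homogeneity only records that no pair of points of $S$ is double-star close, but if some $U\in\scr{U}_{n_0}$ meets both $St(St(x,\scr{U}_{n_0}),\scr{U}_{n_0})$ and $St(St(y,\scr{U}_{n_0}),\scr{U}_{n_0})$, chasing the chain of cover elements only puts $y$ in roughly a fivefold star of $x$, which neither your colouring nor the rank~3 hypothesis forbids; the ``auxiliary third point of $S$'' cannot repair this, since the colouring encodes only pairwise relations among the chosen points. So discreteness of the double-star family would need a diagonal of substantially higher rank.

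The paper's proof avoids both problems by keeping the Ramsey step at exponent $2$ and pushing the rank~3 hypothesis into the \emph{definition of the pair colouring}: set $F_n=\{\{x,y\}\in[X]^2: y\notin St(St(St(x,\scr{U}_n),\scr{U}_n),\scr{U}_n)\}$, so that $[X]^2=\Un_n F_n$ follows from $\Meet_n St(St(St(x,\scr{U}_n),\scr{U}_n),\scr{U}_n)=\{x\}$ and Lemma~\ref{ER} applies directly at $(2^{\aleph_0})^+$. For the resulting uncountable homogeneous $S$ one checks that $S$ is closed, covers $X$ by the \emph{single} stars $\{St(x,\scr{U}_{n_0}):x\in S\}$ together with $X\setminus S$, and applies star-cdc to get a cdc kernel $Y$ with $St(Y,\scr{V})=X$; since no member of the cover other than $St(x,\scr{U}_{n_0})$ contains $x$, each trace $Y\meet St(x,\scr{U}_{n_0})$ is non-empty, and these traces form a discrete family in $Y$ because a single element of $\scr{U}_{n_0}$ meeting two single stars forces one centre into the triple star of the other --- exactly the separation the colouring provides. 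If you want to salvage your write-up, replace the $[X]^3$ colouring and the double-star cover by this pair colouring and single-star cover.
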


\begin{proof}
Let $\{\scr{U}_n:n\in\omega\}$ be a sequence of open covers witnessing that $X$ has a $G_\delta$-diagonal of rank 3 and 
suppose that $|X|>2^{\aleph_0}$. Put $F_n=\{\{x,y\}\in[X]^2: y\notin St(St(St(x,\scr{U}_n),\scr{U}_n),\scr{U}_n)\}$. 
(Note that $y\notin St(St(St(x,\scr{U}_n),\scr{U}_n),\scr{U}_n)$ if and only if 
$x\notin St(St(St(y,\scr{U}_n),\scr{U}_n),\scr{U}_n)$).
Since $[X]^2=\Un\{F_n:n<\omega\}$, by Lemma~\ref{ER} there is an uncountable set $S\sse X$ and an integer $n_0$ 
such that $[S]^2\sse F_{n_0}$. It follows from the fact that $\scr{U}_{n_0}$ is an open cover of $X$ and the definition of $S$ that the set $S$ is closed. 
Then $\scr{V}=\{St(x,\scr{U}_{n_0}):x\in S\}\cup(X\setminus S)$ is an open cover of $X$. Therefore, there is $Y\subset X$ 
such that $Y$ is cdc and $St(Y,\scr{V})=X$. It is straightforward to verify that 
$\{Y\cap St(x,\scr{U}_{n_0}):x\in S\}$ is an uncountable discrete cellular family in $Y$. Since this contradicts the 
fact that $Y$ is cdc, the proof is completed.
\end{proof}

\begin{corollary}
If $X$ is a cdc space with a $G_\delta$-diagonal of rank 3, then $|X| \le 2^{\aleph_0}$.
\end{corollary}

\begin{corollary}
If $X$ is a weakly Lindel\"of space with a $G_\delta$-diagonal of rank 3, then $|X| \le 2^{\aleph_0}$.
\end{corollary}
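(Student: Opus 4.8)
The plan is to obtain this statement as an immediate consequence of Theorem~\ref{TG2} (equivalently, of the corollary for cdc spaces immediately preceding it) together with the Lemma asserting that every weakly Lindel\"of space is cdc.

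First I would record the trivial observation that every cdc space is star-cdc: given any open cover $\scr{U}$ of a cdc space $X$, the subspace $Y=X$ has property cdc by hypothesis and satisfies $St(Y,\scr{U})=St(X,\scr{U})=X$. Thus the class of cdc spaces is contained in the class of star-cdc spaces, and Theorem~\ref{TG2} therefore applies to every cdc space with a $G_\delta$-diagonal of rank 3.

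Second, I would invoke the Lemma that every weakly Lindel\"of space is cdc (equivalently, apply Theorem~\ref{TDC} with $\kappa=wL(X)=\aleph_0$, so that every discrete cellular family in such a space is countable). Combining the two facts: if $X$ is weakly Lindel\"of and has a $G_\delta$-diagonal of rank 3, then $X$ is cdc, hence star-cdc, and Theorem~\ref{TG2} yields $|X|\le 2^{\aleph_0}$.

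There is no real obstacle in this argument; the only point requiring (minimal) care is the verification that cdc implies star-cdc, which is just an unwinding of the definition of star-$\scr{P}$ with the witnessing subspace taken to be all of $X$. Alternatively, one could bypass the star-cdc formulation entirely and rerun the proof of Theorem~\ref{TG2} verbatim, replacing the appeal to "$Y$ is cdc" by the fact that the whole weakly Lindel\"of space $X$ is cdc; this gives the same conclusion with no additional work.
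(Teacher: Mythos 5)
Your proposal is correct and follows exactly the route the paper intends: the corollary is stated without proof precisely because it follows from Theorem~\ref{TG2} via the Lemma that every weakly Lindel\"of space is cdc, together with the trivial observation (taking $Y=X$) that cdc implies star-cdc. Nothing further is needed.
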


By strengthening one of the hypothesis in Theorem \ref{TG2} and weakening the other we can reach the same conclusion:

\begin{theorem}\label{nscdc}
A normal star-cdc space $X$ with a $G_\delta$-diagonal of rank 2 satisfies $|X|\leq 2^{\aleph_0}$. 
\end{theorem}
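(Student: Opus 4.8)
The plan is to mimic the proof of Theorem~\ref{TG2}, using an Erd\H{o}s--Rado argument to extract a large homogeneous set, and then to exploit normality to upgrade from a rank~3 diagonal to a rank~2 diagonal. So suppose $\{\scr{U}_n:n\in\omega\}$ witnesses that $X$ has a $G_\delta$-diagonal of rank~2 and assume $|X|>2^{\aleph_0}$. As before, set $F_n=\{\{x,y\}\in[X]^2: y\notin St(St(x,\scr{U}_n),\scr{U}_n)\}$; this is well-defined since $y\notin St(St(x,\scr{U}_n),\scr{U}_n)$ iff $x\notin St(St(y,\scr{U}_n),\scr{U}_n)$, and $[X]^2=\bigcup_n F_n$ by the rank~2 condition. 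By Lemma~\ref{ER} we obtain an uncountable $S\sse X$ and $n_0$ with $[S]^2\sse F_{n_0}$, i.e. for distinct $x,y\in S$ we have $y\notin St(St(x,\scr{U}_{n_0}),\scr{U}_{n_0})$.

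Now $\scr{V}=\{St(x,\scr{U}_{n_0}):x\in S\}$ is a pairwise-disjoint family of open sets (if $St(x,\scr{U}_{n_0})\meet St(y,\scr{U}_{n_0})\neq\es$ for distinct $x,y\in S$, then $y\in St(St(x,\scr{U}_{n_0}),\scr{U}_{n_0})$, contradicting $[S]^2\sse F_{n_0}$). The key point is that $S$ itself, being a selector that meets each member of the discrete-looking family $\scr{V}$ in one point, is a closed discrete subspace: for $x\in S$, the set $St(x,\scr{U}_{n_0})$ is an open neighborhood of $x$ meeting $S$ only in $x$; and $S$ is closed since $\scr{U}_{n_0}$ covers $X$ while any point $p\notin S$ lies in some $U\in\scr{U}_{n_0}$, and $U$ can meet $S$ in at most one point—so we may shrink $U$ to a neighborhood of $p$ missing $S$ entirely. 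Thus $S$ is an uncountable closed discrete set in $X$.

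Here normality enters: since $X$ is normal and $S$ is closed discrete of size $\aleph_1$, a standard argument (using normality together with the disjoint open sets $St(x,\scr{U}_{n_0})$ already separating the points of $S$) produces a \emph{discrete} family of open sets $\{W_x:x\in S\}$ with $x\in W_x$—concretely, one separates $S$ from the closed set $\bigcup_{y\in S}(\cl{St(y,\scr{U}_{n_0})}\minus St(y,\scr{U}_{n_0}))$ or uses that a closed discrete subset of a normal space can be ``expanded'' to a discrete open family. This discrete open family of cardinality $\aleph_1$ then violates the star-cdc hypothesis by the same mechanism as in Theorem~\ref{TG2}: taking an open cover refining the $W_x$'s, the cdc subspace $Y$ with $St(Y,\scr{V})=X$ must meet uncountably many $W_x$, and $\{Y\meet W_x:x\in S\}$ is an uncountable discrete cellular family in $Y$, contradicting that $Y$ is cdc.

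The main obstacle I expect is the normality step: turning the closed discrete uncountable set $S$ into a genuinely \emph{discrete} (not merely pairwise-disjoint) open expansion. In a general space a closed discrete set need not have a discrete open expansion, but in a normal space it does—this is where the rank-2 (rather than rank-3) hypothesis is ``paid for'' by normality, since a rank-2 diagonal only yields the $St(St(x,\scr{U}_{n_0}),\scr{U}_{n_0})$-separation of the points of $S$, which alone gives a disjoint but not necessarily discrete family, whereas rank~3 would have given the discreteness directly. Care is also needed to verify that $S$ is closed and that the final family $\{Y\meet W_x:x\in S\}$ is discrete in $Y$ (inheriting discreteness from $X$), but those are routine once the open expansion is in hand.
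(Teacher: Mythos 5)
Your overall architecture is the same as the paper's: the same classes $F_n$ (star--disjointness), the Erd\H{o}s--Rado lemma, the observation that $S$ is closed and discrete and that $\{St(x,\scr{U}_{n_0}):x\in S\}$ is a pairwise disjoint open expansion of $S$, and a final contradiction with star-cdc. The genuine gap is the normality step, which you correctly identify as the crux but do not actually carry out: both justifications you offer are flawed. The ``standard fact'' that in a normal space a closed discrete set can be expanded to a discrete family of open sets is false in general --- a discrete open expansion is in particular pairwise disjoint, so this is collectionwise Hausdorff-type separation, which normality does not imply (Bing's Example G is normal and has an uncountable closed discrete set with no disjoint, hence no discrete, open expansion). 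Your other, ``concrete'' suggestion --- separating $S$ from $\bigcup_{y\in S}\bigl(\cl{St(y,\scr{U}_{n_0})}\setminus St(y,\scr{U}_{n_0})\bigr)$ --- also does not work: that union of boundaries need not be closed, so normality cannot be applied to it, and even if it were closed, the resulting separation would not give discreteness at points lying in the closure of $\bigcup_{y\in S}St(y,\scr{U}_{n_0})$ without lying in any single $\cl{St(y,\scr{U}_{n_0})}$.

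The step can be repaired exactly as in the paper, using only what you already have: since $S$ is closed and $W=\bigcup_{x\in S}St(x,\scr{U}_{n_0})$ is open and contains $S$, normality yields an open $V$ with $S\sse V\sse\cl{V}\sse W$. Then $\{St(x,\scr{U}_{n_0})\meet V:x\in S\}$ is a discrete open expansion of $S$: a point outside $\cl{V}$ has the neighborhood $X\setminus\cl{V}$ missing every member, while a point of $\cl{V}$ lies in some (unique, by disjointness) $St(x_0,\scr{U}_{n_0})$, which meets no member other than $St(x_0,\scr{U}_{n_0})\meet V$. With this family, your final step (take the cover $\{St(x,\scr{U}_{n_0})\meet V:x\in S\}\un\{X\setminus S\}$, let $Y$ be a cdc kernel with $St(Y,\cdot)=X$, and note that $\{Y\meet St(x,\scr{U}_{n_0})\meet V:x\in S\}$ is an uncountable discrete cellular family in $Y$) goes through as you describe. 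So the guiding idea --- trading the rank-3 hypothesis for normality to upgrade a disjoint expansion to a discrete one --- is right, but as written the upgrade itself is not proved, and the general fact you invoke in its place is false.
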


\begin{proof}
Let $\{\scr{U}_n:n\in\omega\}$ be a sequence of open covers witnessing that $X$ has a rank 2 diagonal and suppose 
that $|X|>2^{\aleph_0}$. Put $F_n=\{\{x,y\}\in[X]^2: St(x,\scr{U}_n)\meet St(y,\scr{U}_n)=\es\}$. 
Since $[X]^2=\Un\{F_n:n<\omega\}$, by Lemma~\ref{ER} there is an uncountable set $S\sse X$ and an integer $n_0$ 
such that $[S]^2\sse F_{n_0}$. It follows from the fact that $\scr{U}_{n_0}$ is an open cover of $X$ and the definition of $S$ that the set $S$ is closed. 
Therefore, we may pick an open set $V$ such that $S\sse V$ and $\overline{V}\sse\Un\{St(x,\scr{U}_{n_0}):x\in S\}$. Now let 
$\scr{U}=\{St(x,\scr{U}_{n_0})\meet V:x\in S\}\un\{X\minus S\}$. Clearly $\scr{U}$ is an open cover of $X$ and therefore 
there is $Y\subset X$ such that $Y$ has cdc and $St(Y,\scr{U})=X$. Then we must have 
$Y\meet St(x,\scr{U}_{n_0})\meet V\neq\es$ for each $x\in S$. But, 
$\{Y\meet St(x,\scr{U}_{n_0})\meet V:x\in S\}$ is a discrete family of open subsets of $Y$ and so $Y$ cannot have cdc. 
This contradiction finishes the proof.
\end{proof}

\begin{corollary}
A normal cdc space with a $G_\delta$-diagonal of rank 2 has cardinality not exceeding $2^{\aleph_0}$.
\end{corollary}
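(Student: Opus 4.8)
The plan is to derive this corollary directly from Theorem~\ref{nscdc}; the only thing that needs checking is that every cdc space is star-cdc. Recall that $X$ is star-cdc if for every open cover $\scr{U}$ of $X$ there is a subspace $Y\subseteq X$ with cdc such that $St(Y,\scr{U})=X$. If $X$ itself has cdc, then for an arbitrary open cover $\scr{U}$ of $X$ we may simply take $Y=X$: since $\scr{U}$ covers $X$, every nonempty $U\in\scr{U}$ meets $Y=X$, and hence $St(X,\scr{U})=\Un\scr{U}=X$. So $X$ is star-cdc. (In general, every $\scr{P}$-space is star-$\scr{P}$, by exactly this observation.)

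Consequently, a normal cdc space $X$ with a $G_\delta$-diagonal of rank 2 is in particular a normal star-cdc space with a $G_\delta$-diagonal of rank 2, so Theorem~\ref{nscdc} applies and yields $|X|\leq 2^{\aleph_0}$.

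I expect no real obstacle here: all the substance is carried by Theorem~\ref{nscdc}, and the passage from the cdc hypothesis to the star-cdc hypothesis is immediate from the definition of star--$\scr{P}$.
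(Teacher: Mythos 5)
Your proposal is correct and matches the paper's (implicit) argument: the corollary follows from Theorem~\ref{nscdc} once one notes that a cdc space is star-cdc, which is exactly the observation that taking $Y=X$ works for any open cover. Nothing further is needed.
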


\begin{corollary}\label{CBC}
A normal weakly Lindel\"of space with a $G_\delta$-diagonal of rank 2 has cardinality not exceeding $2^{\aleph_0}$.
\end{corollary}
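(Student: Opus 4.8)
The plan is to derive this directly from Theorem~\ref{nscdc}. The entire content of Corollary~\ref{CBC} is that a normal weakly Lindel\"of space with a $G_\delta$-diagonal of rank 2 has cardinality at most $2^{\aleph_0}$, and Theorem~\ref{nscdc} gives exactly this conclusion under the formally weaker hypothesis that $X$ is a \emph{normal star-cdc space} with a $G_\delta$-diagonal of rank 2. So it suffices to observe that every weakly Lindel\"of space is star-cdc.

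First I would recall that by the Lemma preceding Theorem~\ref{TDC}, every weakly Lindel\"of space is cdc; in particular, if $X$ is weakly Lindel\"of, then $X$ itself is a cdc subspace of $X$. Now for any open cover $\scr{U}$ of $X$ we trivially have $St(X,\scr{U})=\Un\scr{U}=X$, taking the witnessing subspace $Y$ to be all of $X$. Hence $X$ is star-cdc. (More generally, the same one-line argument shows every cdc space is star-cdc, so Corollary~\ref{CBC} also follows from the Corollary immediately preceding it, which handles normal cdc spaces.)

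With this observation in hand, the corollary is immediate: if $X$ is a normal weakly Lindel\"of space with a $G_\delta$-diagonal of rank 2, then $X$ is a normal star-cdc space with a $G_\delta$-diagonal of rank 2, so Theorem~\ref{nscdc} applies and yields $|X|\leq 2^{\aleph_0}$. There is no real obstacle here; the only thing to be careful about is making explicit the trivial implication ``weakly Lindel\"of $\Rightarrow$ cdc $\Rightarrow$ star-cdc,'' which is precisely why the two preceding corollaries (the normal cdc case) and this one are both packaged as consequences of Theorem~\ref{nscdc}.
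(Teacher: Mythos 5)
Your proposal is correct and follows exactly the route the paper intends: weakly Lindel\"of implies cdc by the Lemma preceding Theorem~\ref{TDC}, cdc trivially implies star-cdc (take $Y=X$, so $St(X,\scr{U})=\Un\scr{U}=X$), and then Theorem~\ref{nscdc} gives $|X|\le 2^{\aleph_0}$. The paper leaves this chain implicit, and your write-up simply makes it explicit, so there is nothing further to add.
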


We note that in \cite[Theorem 2.2]{BS2020} the following more general result than the one in Corollary \ref{CBC} was proved:

\begin{theorem}
A normal dually weakly Lindel\"of space with a $G_\delta$-diagonal of rank 2 has cardinality not exceeding $2^{\aleph_0}$.
\end{theorem}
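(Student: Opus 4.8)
The plan is to follow the proof of Theorem~\ref{nscdc} closely, substituting the dually weakly Lindel\"of hypothesis for star-cdc; recall that $X$ is \emph{dually weakly Lindel\"of} if for every assignment $z\mapsto\f(z)$ of open neighbourhoods in $X$ there is a weakly Lindel\"of subspace $Y\sse X$ with $\Un\{\f(y):y\in Y\}$ dense in $X$. So I would fix a sequence $\{\scr{U}_n:n\in\omega\}$ of open covers witnessing the rank~2 diagonal, assume for contradiction that $|X|>2^{\aleph_0}$, and, exactly as in Theorem~\ref{nscdc}, put $F_n=\{\{x,y\}\in[X]^2:St(x,\scr{U}_n)\meet St(y,\scr{U}_n)=\es\}$. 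The rank~2 condition gives $[X]^2=\Un_n F_n$, so by Lemma~\ref{ER} there are an uncountable $S\sse X$ and an integer $n_0$ with $[S]^2\sse F_{n_0}$; then $\{St(x,\scr{U}_{n_0}):x\in S\}$ is a pairwise disjoint family of nonempty open sets, and (using that $\scr{U}_{n_0}$ is an open cover and $X$ is $T_1$) the set $S$ is closed, just as in the proof of Theorem~\ref{nscdc}.

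I would then invoke normality to choose an open $V$ with $S\sse V\sse\cl V\sse\Un\{St(x,\scr{U}_{n_0}):x\in S\}$ and set $W_x=St(x,\scr{U}_{n_0})\meet V$ for $x\in S$. These are pairwise disjoint nonempty open sets with $x\in W_x$ and $\Un\{W_x:x\in S\}=V$, and a short computation (using $\cl V\sse\Un\{St(x,\scr{U}_{n_0}):x\in S\}$ and pairwise disjointness) shows that $\{W_x:x\in S\}$ is a discrete family in $X$. Next, using regularity, for each $x\in S$ pick an open $G_x$ with $x\in G_x\sse\cl{G_x}\sse W_x$; since $\{G_x\}$ refines the discrete family $\{W_x\}$, the family $\{\cl{G_x}:x\in S\}$ is discrete as well, so $G:=\Un\{\cl{G_x}:x\in S\}$ is a closed set contained in $V$.

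Now I would consider the open cover $\scr{U}=\{W_x:x\in S\}\cup\{X\minus G\}$ of $X$ (it is a cover because $\Un\{W_x:x\in S\}=V$ and $X\minus G\supseteq X\minus V$), fix a neighbourhood assignment $\f$ with $\f(z)\in\scr{U}$ and $z\in\f(z)$ for all $z$, and apply the dual property to obtain a weakly Lindel\"of $Y\sse X$ with $D:=\Un\{\f(y):y\in Y\}$ dense in $X$. For each $x\in S$, the set $G_x$ is nonempty open, so $D\meet G_x\ne\es$; the member of $\scr{U}$ realizing this cannot be $X\minus G$ (which misses $G_x\sse G$), and, since $G_x\sse W_x$ and the $W$'s are pairwise disjoint, it can only be $W_x$ itself, whence $\f(y)=W_x$ for some $y\in Y$ and so $W_x\meet Y\ne\es$. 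Thus $\{W_x\meet Y:x\in S\}$ is an uncountable discrete family of nonempty open subsets of the weakly Lindel\"of space $Y$, contradicting the fact (the lemma just before Theorem~\ref{TDC}) that every weakly Lindel\"of space is cdc. Hence $|X|\le 2^{\aleph_0}$.

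I expect the real point to be the choice of the cover $\scr{U}$: the direct analogue of the cover used in Theorem~\ref{nscdc}, namely $\{W_x:x\in S\}\cup\{X\minus S\}$, does \emph{not} work here, because $X\minus S$ may already be dense in $X$, so the dual property would be satisfied vacuously by a $Y\sse X\minus S$ and nothing would force $Y$ to enter any $W_x$. The remedy is to shrink each $W_x$ down to $G_x$ with $\cl{G_x}\sse W_x$ and then use the small sets $G_x$ merely as ``test sets'' while keeping the larger $W_x$'s as the actual members of the cover: the leftover piece $X\minus G$ of the cover is then disjoint from all the test sets, so covering every $G_x$ densely really does force $Y$ into the corresponding $W_x$. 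The remaining verifications (that $S$ is closed, that $\{W_x\}$ and $\{G_x\}$ are discrete, that $\scr{U}$ covers) are routine; normality is used, essentially only, to produce $V$, since for an arbitrary closed set $S$ this step is unavailable under mere regularity.
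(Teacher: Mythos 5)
Your proof is correct. Note that the paper does not actually prove this statement: it is quoted from Bella--Spadaro \cite[Theorem 2.2]{BS2020} as a strengthening of Corollary~\ref{CBC}, so there is no in-paper argument to compare with; the closest is the proof of Theorem~\ref{nscdc}, which you adapt. Your adaptation is sound: the Erd\H{o}s--Rado step, the closedness and discreteness of $S$, and the normality step producing $V$ with $S\sse V\sse\cl V\sse\Un\{St(x,\scr{U}_{n_0}):x\in S\}$ are exactly as in Theorem~\ref{nscdc}, and your extra twist --- shrinking each $W_x=St(x,\scr{U}_{n_0})\meet V$ to $G_x$ with $\cl{G_x}\sse W_x$ (available since normal plus Hausdorff gives regularity), so that $G=\Un\{\cl{G_x}:x\in S\}$ is closed and the cover $\{W_x:x\in S\}\un\{X\minus G\}$ has its residual piece disjoint from all the test sets $G_x$ --- is precisely what is needed to make a neighbourhood-assignment argument work when one is only guaranteed that $\Un\{\f(y):y\in Y\}$ is \emph{dense}; density then forces $Y\meet W_x\neq\es$ for every $x\in S$, giving an uncountable discrete family of open subsets of the weakly Lindel\"of (hence cdc) space $Y$. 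Your diagnosis that the naive cover $\{W_x\}\un\{X\minus S\}$ fails under the density formulation is also accurate. One remark on the definition: you take ``dually weakly Lindel\"of'' to mean that the union of the assigned neighbourhoods over some weakly Lindel\"of $Y$ is dense, whereas the more common convention requires this union to cover $X$; since the covering version implies your density version, your argument proves the theorem under either reading (indeed a formally stronger statement), so the mismatch is harmless.
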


The following similar result was obtained in~\cite{BBR}, Proposition 3.4:

\begin{theorem}
If $X$ is a Baire space with a $G_\delta$-diagonal of rank 2, then $|X|\leq 2^{\aleph_0}$.
\end{theorem}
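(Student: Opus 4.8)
The plan is to run the Erd\H{o}s--Rado scheme from the proofs of Theorems~\ref{TG2} and~\ref{nscdc} and to invoke the Baire property only at the last step. Let $\{\scr{U}_n:n<\omega\}$ witness the rank-2 diagonal. The first thing to record is the symmetric reformulation already implicit in those proofs: for $x\neq y$ one has $y\notin St(St(x,\scr{U}_n),\scr{U}_n)$ if and only if $St(x,\scr{U}_n)\meet St(y,\scr{U}_n)=\es$, because a point $z$ lies in both $St(x,\scr{U}_n)$ and $St(y,\scr{U}_n)$ exactly when there are $U,V\in\scr{U}_n$ with $x\in V$, $y\in U$ and $z\in U\meet V$. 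Assuming $|X|>2^{\aleph_0}$, the sets $F_n=\{\{x,y\}\in[X]^2:St(x,\scr{U}_n)\meet St(y,\scr{U}_n)=\es\}$ then cover $[X]^2$, so Lemma~\ref{ER} yields an uncountable $S\sse X$ and an $n_0<\omega$ with $[S]^2\sse F_{n_0}$; that is, $\scr{W}=\{St(x,\scr{U}_{n_0}):x\in S\}$ is a pairwise disjoint family of non-empty open sets.

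Two easy observations come next. First, every $V\in\scr{U}_{n_0}$ contains at most one point of $S$ (two such points would force their $\scr{U}_{n_0}$-stars both to contain the non-empty set $V$, hence to meet), so $S$ is a discrete subspace, and, $X$ being Hausdorff, $S$ is in fact closed and discrete in $X$. Second --- a fact tailored to rank-2 diagonals, and the real engine behind the bound --- for any dense $D\sse X$ the assignment $x\mapsto (D\meet St(x,\scr{U}_n))_{n<\omega}$ is injective: if $x\neq y$, pick $n$ with $St(x,\scr{U}_n)\meet St(y,\scr{U}_n)=\es$ and $V\in\scr{U}_n$ with $x\in V$; then any $d\in D\meet V$ lies in $D\meet St(x,\scr{U}_n)$ but not in $D\meet St(y,\scr{U}_n)$. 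Consequently $|X|\le 2^{|D|}$ for every dense $D$, and the theorem would be immediate from a countable dense subset.

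The crux, therefore, is to use the Baire hypothesis to manufacture --- at least on a non-empty open piece --- enough density to apply the coding map. The route I would try is to pass to the interior $G$ of $\cl{\Un\scr{W}}$, which is a non-empty regular open set, hence a Baire space as an open subspace of a Baire space, and then to use the remaining covers $\{\scr{U}_n:n\neq n_0\}$ to build inside $G$ a countable family of dense open subsets with empty intersection; the emptiness should be forced because, by the rank-2 condition, every point of $G$ is eventually $\scr{U}_n$-separated from cofinally many members of the uncountable set $S$, so no ``diagonal'' limit can survive all the separations. This would contradict the Baire category theorem and finish the argument.

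I expect this last paragraph to be essentially the whole difficulty. The Erd\H{o}s--Rado reduction and the two observations are routine and are copied almost verbatim from the earlier proofs, but converting ``uncountable closed discrete $S$ with pairwise disjoint $\scr{U}_{n_0}$-stars'' into a genuine first-category decomposition of a non-empty open set requires exploiting the covers $\scr{U}_n$ simultaneously over all of $G$, not merely over $S$, and organizing the bookkeeping so that it is the Baire property --- rather than a covering property, as in Corollary~\ref{CBC} --- that gets violated. If a direct category argument proves elusive, the natural fallback is to extract from the Baire property a dense subset $D$ of some non-empty open $G\sse X$ with $|D|\le 2^{\aleph_0}$ and then bound $|G|$, and finally $|X|$, by applying the coding map above together with a further covering/category step.
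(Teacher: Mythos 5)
Your preliminary reductions are correct as far as they go: the equivalence $y\notin St(St(x,\scr{U}_n),\scr{U}_n)\Leftrightarrow St(x,\scr{U}_n)\meet St(y,\scr{U}_n)=\es$ holds, the Erd\H{o}s--Rado step does produce an uncountable $S$ with pairwise disjoint stars, and your coding observation is right (indeed it gives $|X|\le d(X)^{\omega}$ for \emph{any} space with a rank~2 diagonal, no Baire hypothesis needed). But the step you yourself flag as ``essentially the whole difficulty'' is never carried out, and it cannot be carried out: the Baire property alone supplies neither a small dense set nor any covering information. Concretely, let $X$ be a discrete space of cardinality $(2^{\aleph_0})^+$. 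It is Hausdorff and Baire (its only dense subset is $X$ itself, so the Baire condition is vacuous), and the covers $\scr{U}_n=\{\{x\}:x\in X\}$ witness a $G_\delta$-diagonal of every finite rank; yet $|X|>2^{\aleph_0}$. In this space your $S$ is just an uncountable set of isolated points, $G=\cl{\Un\scr{W}}=\Un\scr{W}$ is clopen and discrete, every dense open subset of $G$ equals $G$, so no countable family of dense open sets in $G$ has empty intersection; the fallback of extracting from Baireness a dense set of size at most $2^{\aleph_0}$ fails for the same reason, and even a cardinality bound on one open piece $G$ would not bound $|X|$ without some covering-type hypothesis.

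The deeper point is that there is nothing in the paper to compare your argument with: the statement is quoted without proof from \cite{BBR}, and as literally quoted (``Baire plus a rank~2 diagonal'') it is refuted by the discrete example above, so the cited proposition must carry an additional hypothesis of covering/star type in the spirit of the neighbouring results of Section~\ref{S3} (star-cdc, countable weak extent, ccc, dually weakly Lindel\"of). That extra hypothesis is exactly the ingredient your sketch lacks: the Erd\H{o}s--Rado part of your plan is sound, but no bookkeeping with the covers $\scr{U}_n$ will let the Baire property by itself defeat an uncountable family of pairwise disjoint stars, because such a family is perfectly compatible with Baireness. Any correct proof has to convert the disjoint (or discrete) family $\{St(x,\scr{U}_{n_0}):x\in S\}$ into a violation of a covering-type assumption, as in Theorems~\ref{TG}, \ref{TG2} and \ref{nscdc}, not of category.
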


%%%%%%%%%%%%%%%%%%%%%%%%
\section{A few improved cardinality bounds}\label{S4}

We begin with some significant improvements of the cardinality bounds given in the following theorem. 

\begin{theorem}[{\cite[Theorem 2.12]{BC2018}}]
Let $X$ be a space which is locally $H$-closed, locally Lindel\"of, locally ccc, or regular and locally normal. 
Then $|X|\le 2^{\chi(X)wL(X)}$.
\end{theorem}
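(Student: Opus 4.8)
The plan is to prove the common statement "$|X|\le 2^{\chi(X)wL(X)}$" for each of the four local classes by a single closing-off argument of Bell--Ginsburg--Woods type, localized to the cover witnessing the relevant local property. Throughout put $\kappa=\chi(X)wL(X)$. Since $X$ is locally $\scr P$ (with $\scr P$ one of $H$-closed, Lindel\"of, ccc, or — in the regular case — normal), fix for each $x\in X$ an open neighborhood $W_x$ whose closure $\overline{W_x}$ has property $\scr P$ (in the regular case one uses regularity to shrink so that $\overline{W_x}$ is normal; this is where regularity is used). The family $\{W_x:x\in X\}$ is an open cover, so by $wL(X)\le\kappa$ there is $A_0\in[X]^{\le\kappa}$ with $\bigcup\{W_x:x\in A_0\}$ dense in $X$.

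The core is a transfinite construction of an increasing chain $(A_\alpha)_{\alpha<\kappa^+}$ of subsets of $X$, each of size $\le\kappa$, together with, for each $x$ placed into the chain, a fixed neighborhood base $\scr B_x$ at $x$ of size $\le\chi(X)\le\kappa$. At stage $\alpha$ one does the usual thing: for every countable-type "partial cover" assembled from the $\scr B_x$ with $x\in\bigcup_{\beta<\alpha}A_\beta$ whose union is not dense in $X$, one throws in $\le\kappa$ witnessing points from the complement of its closure; one also closes off inside each $\overline{W_x}$ using the cardinality bound valid for property $\scr P$ (for $H$-closed, $|Y|\le 2^{\chi(Y)}$; for Lindel\"of Hausdorff, $|Y|\le 2^{\chi(Y)}$; for ccc with countable $\pi$-character-type data one uses the appropriate $\scr P$-bound; for normal one uses Bell--Ginsburg--Woods' original $|Y|\le 2^{\chi(Y)wL(Y)}$). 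Set $A=\bigcup_{\alpha<\kappa^+}A_\alpha$; then $|A|\le\kappa^+\cdot\kappa=2^{\le\kappa}$ — more carefully, one arranges the bookkeeping so that $|A|\le 2^\kappa$ — and $\overline A$ is closed under all the operations. The claim is $A=X$.

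To see $A=X$: suppose $p\notin A$. Using the neighborhood bases $\scr B_x$ for $x\in A$ and the Hausdorff property, one produces for each $x\in A$ an open set $U_x\in\scr B_x$ with $p\notin\overline{U_x}$. The family $\{U_x:x\in A\}\cup\{W_y:y\in A_0\}$-localizations, refined appropriately, together with one neighborhood of $p$ avoiding $\overline A$, forms an open cover of $X$; applying $wL$ one extracts a $\le\kappa$-subfamily with dense union, which by the closing-off (the stage at which all its members were available) must already be a subfamily coming entirely from $A$, so its union is dense in $X$ yet misses the open neighborhood of $p$ — contradiction, since that neighborhood is nonempty open. The local property $\scr P$ enters precisely to guarantee that within each $\overline{W_x}$ enough points were captured so that the "partial cover" really does see all of $\overline{W_x}$, i.e. that $\overline A\cap\overline{W_x}$ is all of $\overline{W_x}$ for $x\in A$.

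The main obstacle is uniformity: the four cases use genuinely different cardinality bounds for the local pieces ($H$-closed, Lindel\"of, ccc, normal), so the closing-off must be set up abstractly enough that "close off inside $\overline{W_x}$ so as to realize the $\scr P$-bound" is a legitimate single step, while the global gluing via $wL(X)$ stays the same. A secondary subtlety is the regular/locally-normal case, where one cannot expect $\overline{W_x}$ itself to be normal without using regularity to interpose $\overline{W_x}\subseteq$ (open) $\subseteq$ (normal neighborhood); handling that shrinking correctly, and checking that the resulting refined cover is still subordinate to the construction, is the delicate bookkeeping point. I expect the cellularity (ccc) case to require the extra observation that a locally ccc space has $wL$-many-bounded cellular structure only locally, so one really does need the global $wL(X)$ factor and cannot drop it.
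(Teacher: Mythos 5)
This theorem is not proved in the paper at all: it is quoted from \cite[Theorem 2.12]{BC2018} as the result being improved, so your sketch can only be judged on its own terms and against the techniques of Section 4. As written, your closing-off argument has a genuine gap at the decisive final step. Having built $A$ and taken $p\notin\overline{A}$, you form a family consisting of the sets $U_x$ ($x\in A$, chosen with $p\notin\overline{U_x}$) together with ``one neighborhood of $p$ avoiding $\overline{A}$''; this is not a cover of $X$ (points outside $\overline{A}$ and away from $p$ are missed), and if you repair it by adding $X\setminus\overline{A}$, then the $\le\kappa$-subfamily with dense union produced by $wL(X)$ may perfectly well contain $X\setminus\overline{A}$ or the neighborhood of $p$. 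Your assertion that the extracted subfamily ``must already be a subfamily coming entirely from $A$'' is precisely what is unjustified, and it is the heart of the matter: with Hausdorffness alone one cannot force a family clustered around $\overline{A}$ to be dense in all of $X$ --- this is exactly why the Bell--Ginsburg--Woods inequality is open beyond the normal case. The known arguments circumvent this either by normality (first separate the closed set from $p$, so that all the $U_x$ lie in an open $G$ with $p\notin\overline{G}$), or by covering a \emph{regular-closed} set $F$ (a closure of a union of open sets), where $wL$ is hereditary, together with a device (e.g.\ $\pi$-base elements with compact or $H$-closed closures, as in Theorem 4.13 of this paper) producing an open set whose closure misses $F$. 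Your $A$ is a set of points, $\overline{A}$ need not be regular closed, and no such separation mechanism appears in the sketch; the vague statement that the local property guarantees $\overline{A}\cap\overline{W_x}=\overline{W_x}$ does not supply one. There are also bookkeeping slips (the chain members must be allowed cardinality $2^\kappa$, not $\kappa$; ``$\kappa^+\cdot\kappa=2^{\le\kappa}$'' is not meaningful), but those are repairable, whereas the missing mechanism is not.

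Moreover, for these four local hypotheses no closing-off argument is needed, and the simpler route is the one this paper actually exploits in Section 4. For each $x$ choose open $W_x$ with $\overline{W_x}$ having the relevant property (in the regular, locally normal case use regularity to get $\overline{W_x}$ inside a normal neighborhood, so $\overline{W_x}$ is normal, and being regular closed it satisfies $wL(\overline{W_x})\le wL(X)$). Then $|\overline{W_x}|\le 2^{\kappa}$ by a known bound: Dow--Porter for $H$-closed, Arhangel\cprime ski\u{\i} for Lindel\"of, Hajnal--Juh\'asz $2^{c\chi}$ for ccc, and Bell--Ginsburg--Woods itself for the normal pieces. Extract via $wL(X)$ a $\le\kappa$-subfamily of $\{W_x\}$ with dense union, giving $d(X)\le\kappa\cdot 2^{\kappa}=2^{\kappa}$, and finish with the Hausdorff inequality $|X|\le d(X)^{\chi(X)}$ (or the sharper $d(X)^{\psi_c(X)t(X)}$, which is how the theorems on locally $H$-closed, locally Lindel\"of and locally ccc spaces in Section 4 obtain the stronger bounds of the form $wL(X)^{\psi_c(X)t(X)}$). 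This is both simpler and stronger than what your scheme would yield even if completed.
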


As in~\cite{BC2018}, for a property $\scr{P}$ of a space X, we say X is \emph{locally} $\scr{P}$ if every point in X has a neighborhood with property $\scr{P}$.

In the proofs of some of the following theorems we will use the fact that if $X$ is a Hausdorff space, then 
$|X|\le d(X)^{\psi_c(X)t(X)}$ (see \cite{BC88}).

\begin{theorem}\label{th4.2}
If $X$ is locally $H$-closed, then $|X|\leq wL(X)^{\psi_c(X)t(X)}$.
\end{theorem}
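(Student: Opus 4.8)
The plan is to mimic the structure of the proof of Theorem~\ref{ed} and the other star/covering arguments in the paper, but working toward the bound $|X|\le wL(X)^{\psi_c(X)t(X)}$ via the inequality $|X|\le d(X)^{\psi_c(X)t(X)}$ from~\cite{BC88} that is quoted just above the statement. So the real goal reduces to showing that a locally $H$-closed Hausdorff space satisfies $d(X)\le wL(X)^{\psi_c(X)t(X)}$, or more precisely to produce, inside an inductive closure construction, a dense subset of the right size. Set $\kappa=wL(X)^{\psi_c(X)t(X)}$ (note $wL(X)\le\kappa$, $\psi_c(X)\le\kappa$, $t(X)\le\kappa$, and $\kappa^{\psi_c(X)t(X)}=\kappa$). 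For each $x\in X$ fix an open $U_x\ni x$ with $\overline{U_x}$ $H$-closed; since $H$-closed subspaces have $wL\le\aleph_0$ in the relative sense, the family $\{U_x:x\in X\}$ is an open cover that will be exploited to transfer the weak-Lindel\"of hypothesis to the local pieces.

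The main construction is a standard closing-off argument of length $\kappa^+$ of regular cardinality. First I would build, by transfinite recursion, an increasing chain $\{A_\alpha:\alpha<\kappa^+\}$ of subsets of $X$ with $|A_\alpha|\le\kappa$, closed under the following operations: (i) take the closure $\overline{A_\alpha}$ (which has size $\le|A_\alpha|^{\psi_c(X)t(X)}\le\kappa$ by the $d^{\psi_c t}$ bound applied to the subspace, or more simply by a standard $t(X)$-closure argument since $|\overline{A}|\le |A|^{t(X)}$ fails in general — instead one uses the $\psi_c$-closure trick: for every countable—or $\le\psi_c(X)$-sized—subset $B$ of $A_\alpha$ and every point in the closure of $B$, add a witness); (ii) for each point $x$ already put in, throw in a point from each member of a weakly-Lindel\"of-sized dense subfamily of an open cover of the $H$-closed neighborhood $\overline{U_x}$; (iii) close off under choosing, for each $\le\psi_c(X)$-indexed family of open sets whose intersection is a single point, the relevant witnesses so that pseudocharacter can be computed inside the limit. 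Then set $A=\bigcup_{\alpha<\kappa^+}A_\alpha$; by regularity of $\kappa^+$, $A$ is closed under all these operations and $|A|\le\kappa$. The heart is to show $A=X$: if $p\in X\setminus A$, use local $H$-closedness at $p$ together with the fact that $A\cap U_p$ is relatively dense nowhere near $p$ — more precisely, cover $\overline{U_p}$ with sets of the form $X\setminus\overline{\{a\}}$-type pseudobase members around points of $A\cap\overline{U_p}$ plus a small neighborhood of $p$, use $H$-closedness to extract a finite subfamily with dense union, and derive that $p$ is in the closure of a $\le\psi_c(X)$-sized subset of $A$, forcing $p\in A$ by the closure conditions.

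Concretely, the covering step I would use is the one appearing in~\cite{BC2018} and echoed in Theorem~\ref{ed}: for each $x\in X$ pick $U_x$ with $\overline{U_x}$ $H$-closed; in the recursion, whenever $x\in A_\alpha$ and $\scr{W}$ is any open cover of $\overline{U_x}$ coded by previously-chosen data, select a finite $\scr{W}_0\subseteq\scr{W}$ with $\overline{U_x}=\overline{\bigcup\scr{W}_0}$ and drop one point of $A_{\alpha+1}$ into each member of $\scr{W}_0$. To kill $p\notin A$: since $A$ is $t(X)$- and $\psi_c(X)$-closed, for each $a\in A$ with $a\ne p$ there is a family of $\le\psi_c(X)$ open sets separating $p$ from $a$ closed-ly, and using $H$-closedness of $\overline{U_p}$ one gets that $p$ lies in $\overline{A\cap V}$ for some open $V\ni p$ with $\overline{V}$ $H$-closed, hence (by $t(X)$-closure of $A$) in the closure of a $\le t(X)$-sized subset of $A$, hence $p\in\overline{A}=A$, a contradiction. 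Therefore $A=X$, $|X|\le\kappa=wL(X)^{\psi_c(X)t(X)}$.

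The hard part will be making step (ii)/the killing argument genuinely work: the delicate point is that ``$wL$ of an $H$-closed space is countable'' must be used in the \emph{relative} form (dense union of a finite subfamily of an open cover of $\overline{U_x}$, which is exactly $H$-closedness, so that is fine), but one must ensure that the open covers of $\overline{U_x}$ to which we apply it are the \emph{right} ones — namely covers built from a fixed pseudocharacter-$\kappa$ collection of open sets around points of $A$ together with a designated neighborhood shrinking around $p$ — and that after extracting the finite subfamily one really traps $p$ in the closure of a small subset of $A$ rather than merely in the closure of $\bigcup U_x$ over all of $X$. Managing the bookkeeping so that all these covers are ``seen'' by some stage $A_\alpha$ (there are only $\le\kappa$ of them because $|A|\le\kappa$ and $\psi_c(X)\le\kappa$) and checking the cardinal arithmetic $\kappa^{\psi_c(X)t(X)}=\kappa$ at each closure step is routine; the conceptual obstacle is the local-to-global transfer of the weak Lindel\"of degree through the $H$-closed neighborhoods, which is exactly where local $H$-closedness (as opposed to a weaker local property) is essential.
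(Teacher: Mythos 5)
There is a genuine gap, and it sits exactly where you yourself flag the ``hard part.'' Your closing-off construction produces a set $A=\bigcup_{\alpha<\kappa^+}A_\alpha$ of size at most $\kappa=wL(X)^{\psi_c(X)t(X)}$, and then you claim that any $p\in X\setminus A$ is forced into $\overline{A}$ by local $H$-closedness at $p$. Nothing in your construction justifies this: if $p\notin\overline{A}$ there is an open $W\ni p$ missing $A$ entirely, and no cover of $\overline{U_p}$ by pseudobase elements around points of $A$, together with extraction of a finite dense subfamily, can place $p$ in the closure of any subset of $A$ (think of an isolated point of a discrete space not captured by the recursion). To make the killing step work you would first need to know that $A$ is dense in $X$, and establishing density is precisely where the \emph{global} hypothesis $wL(X)$ must be applied to an open cover of all of $X$; in your write-up $wL(X)$ never actually gets used --- $H$-closedness of the local pieces only yields finite dense subfamilies, which is irrelevant to the cardinality bookkeeping you need.

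The second missing ingredient is the cardinality bound for the local pieces. The fact that makes the local-to-global transfer work is not ``$H$-closed implies (relatively) weakly Lindel\"of,'' but the Dow--Porter bound $|Y|\le 2^{\psi_c(Y)}$ for $H$-closed $Y$ \cite{DowPor82}, which you never invoke. With it the whole elaborate recursion is unnecessary and the paper's argument is short: cover $X$ by open sets $U_x$ with $\overline{U}_x$ $H$-closed, so $|U_x|\le 2^{\psi_c(X)}$; by definition of $wL(X)$ choose a subfamily $\scr{V}$ of size at most $wL(X)$ whose union is dense; then $d(X)\le wL(X)\cdot 2^{\psi_c(X)}\le wL(X)^{\psi_c(X)}$, and the Bella--Cammaroto inequality $|X|\le d(X)^{\psi_c(X)t(X)}$ \cite{BC88} (which you correctly identified as the reduction to aim for) gives $|X|\le wL(X)^{\psi_c(X)t(X)}$. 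So your overall frame (reduce to bounding a dense set, then apply $d^{\psi_c t}$) is right, but the mechanism you propose for bounding the dense set does not close, and the two facts that do close it --- the global use of $wL(X)$ on the cover $\{U_x\}$ and the Dow--Porter bound on each $\overline{U}_x$ --- are absent.
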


\begin{proof}
For every $x\in X$ there exists an open set $U_x$ such that $x\in U_x$ and $\overline{U}_x$ is $H$-closed. It follows from 
the Dow-Porter bound~\cite{DowPor82} for $H$-closed spaces that for every $x\in X$, $|U_x|\leq |\overline{U}_x|\leq 2^{\psi_c(\overline{U}_x)}\leq 2^{\psi_c(X)}$.
Clearly $\scr{U}=\{U_x:x\in X\}$ is an open cover of $X$. Then there exists $\scr{V}\in[\scr{U}]^{\leq wL(X)}$ such that 
$X=\overline{\Un\scr{V}}$. Thus $\Un\scr{V}$ is dense in $X$ and $|\Un\scr{V}|\leq wL(X)\cdot 2^{\psi_c(X)}\leq wL(X)^{\psi_c(X)}$. As $X$ is Hausdorff, we have
\begin{align}
|X|\leq d(X)^{\psi_c(X)t(X)}\leq |\Un\scr{V}|^{\psi_c(X)t(X)}&\leq\left(wL(X)^{\psi_c(X)}\right)^{\psi_c(X)t(X)}\notag\\
&=wL(X)^{\psi_c(X)t(X)}.\notag
\end{align}
\end{proof}

\begin{corollary}
If $X$ is locally $H$-closed, then $|X|\leq wL(X)^{\chi(X)}$.
\end{corollary}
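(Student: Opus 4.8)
The plan is to deduce the corollary directly from Theorem~\ref{th4.2} by the usual reduction of a character bound to a pseudocharacter-plus-tightness bound. The only thing to check is that a locally $H$-closed space satisfies $\psi_c(X)t(X)\le\chi(X)$, after which Theorem~\ref{th4.2} gives $|X|\le wL(X)^{\psi_c(X)t(X)}\le wL(X)^{\chi(X)}$.

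First I would recall that $t(X)\le\chi(X)$ holds for every topological space, since a point in the closure of a set is in the closure of a subset of size at most the character (indeed at most the number of basic neighborhoods needed at that point). So it remains to see $\psi_c(X)\le\chi(X)$. For Hausdorff spaces one always has $\psi(X)\le\chi(X)$; the slightly stronger statement we want, $\psi_c(X)\le\chi(X)$, requires a separation property. Here it is available because $X$ is locally $H$-closed and hence locally (quasi)regular: each point $x$ has an $H$-closed neighborhood $\overline{U}_x$, and inside it one can intersect a family of at most $\chi(X)$ closed neighborhoods of $x$ down to $\{x\}$, using that regular-closed subspaces of $H$-closed spaces are $H$-closed and that $H$-closed spaces are, in the relevant local sense, well enough separated to carry out the closed-pseudocharacter computation. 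Concretely, fix a neighborhood base $\{B_\alpha:\alpha<\chi(X)\}$ at $x$ with $\overline{B_\alpha}\subseteq U_x$; then $\{x\}=\bigcap_\alpha \overline{B_\alpha}$ by Hausdorffness, and this exhibits a closed pseudobase at $x$ of size $\le\chi(X)$, so $\psi_c(X)\le\chi(X)$.

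Combining the two inequalities gives $\psi_c(X)t(X)\le\chi(X)$, and then
\[
|X|\le wL(X)^{\psi_c(X)t(X)}\le wL(X)^{\chi(X)},
\]
the first inequality being Theorem~\ref{th4.2}. The main (and really the only) obstacle is the verification that $\psi_c(X)\le\chi(X)$ for locally $H$-closed spaces; everything else is a one-line application of the preceding theorem. Since $\psi_c$ is defined via closed neighborhoods and $X$ is only assumed Hausdorff globally, one must genuinely use the local $H$-closed (hence locally quasiregular) structure to get the closed neighborhoods small enough — but this is exactly the same local argument already used in the proof of Theorem~\ref{th4.2}, so no new idea is needed.
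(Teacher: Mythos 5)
Your overall route is exactly the intended one: the corollary is an immediate consequence of Theorem~\ref{th4.2} together with $t(X)\le\chi(X)$ and $\psi_c(X)\le\chi(X)$, and the paper offers no further argument. However, your justification of $\psi_c(X)\le\chi(X)$ is off in two ways. First, no separation beyond Hausdorff is needed, so the appeal to local $H$-closedness (or ``local quasiregularity'') is a red herring: in any Hausdorff space, if $y\ne x$ pick disjoint open sets $U\ni x$ and $V\ni y$; then $y\notin\cl{U}$, and any basic neighborhood $B_\alpha\subseteq U$ satisfies $y\notin\cl{B_\alpha}$, so the closures of \emph{any} neighborhood base at $x$ of size $\le\chi(X)$ already intersect to $\{x\}$. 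Second, the concrete step you propose — fixing a neighborhood base $\{B_\alpha:\alpha<\chi(X)\}$ at $x$ with $\cl{B_\alpha}\subseteq U_x$ — is a step that can fail: an $H$-closed neighborhood does not make $X$ regular at $x$ (H-closed spaces need not be regular, e.g.\ \katetov\ extensions), so such a base need not exist. Fortunately that containment is never used; the argument you actually need is the Hausdorff one above, after which $\psi_c(X)t(X)\le\chi(X)$ and
\[
|X|\le wL(X)^{\psi_c(X)t(X)}\le wL(X)^{\chi(X)}
\]
follow as you state.
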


If $X$ is locally compact, we have the following corollary, which improves Corollary 2.13 in \cite{BC2018}. Recall that 
$\chi(X)=\psi(X)=\psi_c(X)$ for locally compact spaces and that $t(X)\le\chi(X)$ for every space $X$.

\begin{corollary}\label{lc}
If $X$ is locally compact, then $|X|\leq wL(X)^{\psi(X)}=wL(X)^{\chi(X)}$.
\end{corollary}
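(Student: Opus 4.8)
The final statement to prove is Corollary~\ref{lc}: if $X$ is locally compact, then $|X| \le wL(X)^{\psi(X)} = wL(X)^{\chi(X)}$.

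This is stated as a corollary, so the plan is straightforward: derive it from Theorem~\ref{th4.2}. The plan is to observe that a locally compact Hausdorff space is locally $H$-closed, since every compact set is $H$-closed and each point has a neighborhood with compact closure. Hence Theorem~\ref{th4.2} applies and gives $|X| \le wL(X)^{\psi_c(X)t(X)}$. Then I would invoke the standard facts, which the corollary statement itself recalls: for locally compact Hausdorff spaces $\chi(X) = \psi(X) = \psi_c(X)$, and $t(X) \le \chi(X)$ for every space. Combining these, $\psi_c(X)t(X) \le \chi(X)\cdot\chi(X) = \chi(X) = \psi(X)$, so $wL(X)^{\psi_c(X)t(X)} \le wL(X)^{\psi(X)} = wL(X)^{\chi(X)}$. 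The reverse-type inequality $wL(X)^{\psi(X)} = wL(X)^{\chi(X)}$ is immediate from $\psi(X) = \chi(X)$.

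There is essentially no obstacle here; the only thing to be a little careful about is the justification that local compactness implies local $H$-closedness, i.e. that a compact Hausdorff space is $H$-closed (every open cover has a finite subcover, hence a finite subfamily with dense — indeed full — union), and that "$\overline{U}_x$ compact" can be arranged by local compactness. One could also remark that $\psi(X) = \psi_c(X)$ for locally compact spaces follows since such spaces are Tychonoff and locally compact Hausdorff spaces are in particular regular, where closed pseudocharacter coincides with pseudocharacter.

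A clean write-up: Since $X$ is locally compact and Hausdorff, every point has a neighborhood with compact, hence $H$-closed, closure; thus $X$ is locally $H$-closed and Theorem~\ref{th4.2} yields $|X| \le wL(X)^{\psi_c(X)t(X)}$. As $\psi_c(X) = \psi(X) = \chi(X)$ and $t(X) \le \chi(X)$ for locally compact $X$, we get $\psi_c(X)t(X) = \chi(X) = \psi(X)$, so $|X| \le wL(X)^{\psi(X)} = wL(X)^{\chi(X)}$. The main "obstacle," such as it is, is simply recalling the cardinal-function identities for locally compact spaces; everything else is a direct substitution into an already-proved theorem.
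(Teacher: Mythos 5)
Your proposal is correct and matches the paper's own derivation: the paper likewise deduces Corollary~\ref{lc} from Theorem~\ref{th4.2} (locally compact $\Rightarrow$ locally $H$-closed), invoking the standard identities $\chi(X)=\psi(X)=\psi_c(X)$ for locally compact Hausdorff spaces and $t(X)\le\chi(X)$. Your extra remarks justifying that compact closures are $H$-closed and that $\psi=\psi_c$ in regular spaces are accurate and only make explicit what the paper leaves implicit.
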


The following two theorems have proofs that follow a similar structure. 

\begin{theorem}\label{th4.4}
If $X$ is locally Lindel\"of, then $|X|\leq wL(X)^{\psi_c(X)t(X)}$.
\end{theorem}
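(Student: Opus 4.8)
The plan is to mimic the proof of Theorem~\ref{th4.2} almost verbatim, replacing the Dow--Porter bound for $H$-closed spaces by the analogous Arhangel{\cprime}ski{\u\i}-type bound $|Y|\le 2^{L(Y)\psi(Y)}$ for Lindel\"of Hausdorff spaces (more precisely, the bound $|Y|\le 2^{\psi_c(Y)}$ valid when $L(Y)=\aleph_0$, which is just the Lindel\"of case of $|Y|\le 2^{L(Y)\psi_c(Y)}$). First I would fix, for each $x\in X$, an open neighborhood $U_x$ with $\overline{U}_x$ Lindel\"of; since a closed subspace of a Lindel\"of space is Lindel\"of and $\psi_c(\overline{U}_x)\le\psi_c(X)$, this gives $|U_x|\le|\overline{U}_x|\le 2^{\psi_c(\overline{U}_x)}\le 2^{\psi_c(X)}$.

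Next, $\scr{U}=\{U_x:x\in X\}$ is an open cover of $X$, so by definition of $wL(X)$ there is $\scr{V}\in[\scr{U}]^{\le wL(X)}$ with $\overline{\bigcup\scr{V}}=X$. Hence $\bigcup\scr{V}$ is dense in $X$ and $|\bigcup\scr{V}|\le wL(X)\cdot 2^{\psi_c(X)}\le wL(X)^{\psi_c(X)}$, so $d(X)\le wL(X)^{\psi_c(X)}$. Finally, since $X$ is Hausdorff, the Bella--Cammaroto inequality $|X|\le d(X)^{\psi_c(X)t(X)}$ recalled just before the statement yields
\begin{align}
|X|\le d(X)^{\psi_c(X)t(X)}&\le\left(wL(X)^{\psi_c(X)}\right)^{\psi_c(X)t(X)}\notag\\
&=wL(X)^{\psi_c(X)t(X)},\notag
\end{align}
which is the desired bound.

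There is essentially no obstacle here beyond bookkeeping; the only point requiring a moment's care is why $|\overline{U}_x|\le 2^{\psi_c(X)}$. One must note that $\overline{U}_x$, being a closed subspace of $X$ with $\overline{U}_x$ itself Lindel\"of, satisfies $\psi_c(\overline{U}_x)\le\psi_c(X)$ (closed pseudocharacter does not increase on subspaces once one intersects the witnessing closed sets with the subspace), and then apply the classical bound $|Z|\le 2^{L(Z)\psi_c(Z)}$ with $L(\overline{U}_x)=\aleph_0$. The rest is the same density-plus-Bella--Cammaroto argument used in Theorem~\ref{th4.2}, so I would keep the write-up parallel to that proof.
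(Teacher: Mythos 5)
The main step of your argument does not hold: the ``classical bound'' $|Z|\le 2^{L(Z)\psi_c(Z)}$ that you invoke for the Lindel\"of pieces is not a theorem for Hausdorff (or even Tychonoff) spaces. Dropping the tightness factor there lands you squarely in Arhangel\cprime ski\u\i's problem on Lindel\"of spaces with points $G_\delta$: by the Shelah--Gorelic construction it is consistent that there is a Lindel\"of, zero-dimensional Hausdorff space $Z$ with $\psi_c(Z)=\psi(Z)=\aleph_0$ and $|Z|>2^{\aleph_0}$, so the inequality you rely on is consistently false, and it is certainly not available as a lemma. The bound that is actually valid for Hausdorff spaces is \v{S}apirovski\u\i's $|Z|\le 2^{L(Z)t(Z)\psi(Z)}$, and this is exactly what the paper uses: it estimates $|F_x|\le 2^{L(F_x)\psi(F_x)t(F_x)}\le 2^{\psi_c(X)t(X)}$, which costs nothing in the end since the final exponent is $\psi_c(X)t(X)$ anyway. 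So your overall scheme (cover, weak-Lindel\"of refinement with dense union, then the Bella--Cammaroto inequality $|X|\le d(X)^{\psi_c(X)t(X)}$) is the right one and matches the paper, but as written the crucial local cardinality estimate is unjustified; inserting the tightness factor at that step repairs it.

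A secondary point: the paper's definition of locally Lindel\"of is that every point has a \emph{neighborhood} $F_x$ that is Lindel\"of, not an open neighborhood with Lindel\"of \emph{closure}. In a Hausdorff space that is not regular, the closure of a Lindel\"of neighborhood need not be Lindel\"of, so your choice of $U_x$ with $\overline{U}_x$ Lindel\"of proves a formally narrower statement. The paper's proof avoids this by working with $F_x$ itself and covering $X$ by the sets $\mathrm{Int}(F_x)$, bounding $|\mathrm{Int}(F_x)|\le|F_x|$; you should do the same.
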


\begin{proof}
For every $x\in X$ fix a neighborhood $F_x$ of $x$ which is Lindel\"of. Since $X$ is Hausdorff and $\psi(X)$ and $t(X)$ are monotone functions, for every $x\in X$ we have
$|\mathrm{Int}(F_x)|\le |F_x|\le 2^{L(F_x)\psi(F_x)t(F_x)}\le 2^{\psi(X)t(X)}\le 2^{\psi_c(X)t(X)}$.
Clearly $\scr{U}=\{\mathrm{Int}(F_x):x\in X\}$ is an open cover of $X$.
Hence, there exists $\scr{V}\in[\scr{U}]^{\leq wL(X)}$ such that $X=\overline{\Un\scr{V}}$.
Thus $\Un\scr{V}$ is dense in $X$ and $|\Un\scr{V}|\leq wL(X)\cdot 2^{\psi_c(X)t(X)}\leq wL(X)^{\psi_c(X)t(X)}$. As $X$ is Hausdorff, we have
\begin{align}
|X|\leq d(X)^{\psi_c(X)t(X)}\leq |\Un\scr{V}|^{\psi_c(X)t(X)}&\leq\left(wL(X)^{\psi_c(X)t(X)}\right)^{\psi_c(X)t(X)}\notag\\&=wL(X)^{\psi_c(X)t(X)}.\notag
\end{align}
\end{proof}

\begin{corollary}
If $X$ is locally Lindel\"of, then $|X|\leq wL(X)^{\chi(X)}$.
\end{corollary}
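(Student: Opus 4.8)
The plan is to deduce this corollary directly from Theorem~\ref{th4.4}, which already gives $|X|\leq wL(X)^{\psi_c(X)t(X)}$ for any locally Lindel\"of space. The only thing left is a routine comparison of cardinal invariants: I would observe that $\psi_c(X)\leq\chi(X)$ and $t(X)\leq\chi(X)$ hold for every Hausdorff space, so $\psi_c(X)t(X)\leq\chi(X)$, and therefore $wL(X)^{\psi_c(X)t(X)}\leq wL(X)^{\chi(X)}$. Plugging this into Theorem~\ref{th4.4} yields $|X|\leq wL(X)^{\chi(X)}$.

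To make the step $\psi_c(X)\leq\chi(X)$ transparent one recalls that for a Hausdorff space and any point $x$, a local base $\scr{B}_x$ at $x$ with $|\scr{B}_x|=\chi(x,X)$ can be converted into a family of closed neighborhoods $\{\overline{B}:B\in\scr{B}_x\}$ whose intersection is $\{x\}$; hence $\psi_c(x,X)\leq\chi(x,X)$, and taking suprema over $x\in X$ gives $\psi_c(X)\leq\chi(X)$. The inequality $t(X)\leq\chi(X)$ is standard: if $x\in\overline{A}$, then picking one point of $A$ in each member of a local base at $x$ produces a subset of $A$ of size at most $\chi(x,X)$ whose closure contains $x$.

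There is essentially no obstacle here; the content is entirely in Theorem~\ref{th4.4}, and the corollary is a one-line specialization obtained by bounding the exponent. The same remark explains the parallel corollaries already recorded after Theorems~\ref{th4.2} and~\ref{th4.4}.
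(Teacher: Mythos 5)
Your proposal is correct and matches the intended (implicit) argument: the corollary is exactly the specialization of Theorem~\ref{th4.4} obtained from the standard inequalities $\psi_c(X)\leq\chi(X)$ and $t(X)\leq\chi(X)$ for Hausdorff spaces, so $wL(X)^{\psi_c(X)t(X)}\leq wL(X)^{\chi(X)}$. Your justifications of those two inequalities are also accurate.
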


In the proof of the following theorem we use Sun's inequality that if $X$ is a Hausdorff space, then 
$|X|\le\pi\chi(X)^{c(X)\psi_c(X)}$ (see \cite{SH88}).

\begin{theorem}
If $X$ is locally ccc, then $$|X|\leq (wL(X)\pi\chi(X))^{\psi_c(X)t(X)}.$$
\end{theorem}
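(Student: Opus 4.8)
If $X$ is locally ccc, then $|X|\leq (wL(X)\pi\chi(X))^{\psi_c(X)t(X)}$.

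The plan is to mimic the structure of the proofs of Theorems~\ref{th4.2} and~\ref{th4.4}: cover $X$ by nice neighborhoods, bound each neighborhood, extract a small dense union using the weak Lindel\"of degree, and then apply a density-based cardinality inequality. Set $\kappa=wL(X)$, $\pi=\pi\chi(X)$, $\tau=t(X)$, and $\psi=\psi_c(X)$. First I would, for each $x\in X$, fix an open neighborhood $U_x$ of $x$ whose closure (or interior, as needed) has the ccc property, so $c(\overline{U}_x)\le\omega$; since $\pi\chi$ is monotone on subspaces in the relevant sense (a local $\pi$-base computed in $\overline{U}_x$ works, so $\pi\chi(\overline{U}_x)\le\pi\chi(X)$) and $\psi_c$ does not increase on subspaces, Sun's inequality $|Y|\le\pi\chi(Y)^{c(Y)\psi_c(Y)}$ gives $|U_x|\le|\overline{U}_x|\le\pi^{\omega\cdot\psi}=\pi^{\psi}$.

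Next I would apply the weak Lindel\"of degree to the open cover $\scr{U}=\{U_x:x\in X\}$: there is $\scr{V}\in[\scr{U}]^{\le\kappa}$ with $X=\overline{\Un\scr{V}}$, so $\Un\scr{V}$ is dense in $X$ and
$$
d(X)\le|\Un\scr{V}|\le\kappa\cdot\pi^{\psi}\le(\kappa\pi)^{\psi}.
$$
Finally, since $X$ is Hausdorff, the Bella--Cammaroto-type bound $|X|\le d(X)^{\psi_c(X)t(X)}$ quoted before Theorem~\ref{th4.2} yields
$$
|X|\le d(X)^{\psi\tau}\le\left((\kappa\pi)^{\psi}\right)^{\psi\tau}=(\kappa\pi)^{\psi\tau}=(wL(X)\pi\chi(X))^{\psi_c(X)t(X)},
$$
using $\psi\cdot\psi\tau=\psi\tau$ since these are infinite cardinals. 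This completes the argument.

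The main point requiring care — though it is routine rather than a genuine obstacle — is the monotonicity bookkeeping in the first step: one must check that $c(\overline{U}_x)\le\omega$ is exactly what "locally ccc" supplies (a neighborhood with ccc, hence its interior, hence by standard arguments its closure, is ccc — or one simply takes $U_x$ itself ccc and works with $\overline{U}_x$, noting a ccc space has ccc closure), and that $\pi\chi(\overline{U}_x)\le\pi\chi(X)$ and $\psi_c(\overline{U}_x)\le\psi_c(X)$. The inequality $\pi\chi$ of a subspace can in general exceed that of the whole space, but for a neighborhood (open subspace closure) the local $\pi$-bases in $X$ restrict appropriately; alternatively one replaces $U_x$ by $\mathrm{Int}(\overline{U}_x)$ or argues as in Corollary~\ref{edcard} via the relevant monotonicity facts. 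Everything else is the same cardinal-arithmetic pattern already used twice above.
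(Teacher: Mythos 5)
Your overall architecture is exactly the paper's: cover $X$ by ccc neighborhoods, bound each piece by Sun's inequality, extract a $wL(X)$-sized subfamily with dense union, and finish with $|X|\le d(X)^{\psi_c(X)t(X)}$. The final cardinal arithmetic is also fine. The problem is the step you dismiss as ``routine bookkeeping'': your main line applies Sun's inequality to $\overline{U}_x$ and justifies this with the claim that $\pi\chi(\overline{U}_x)\le\pi\chi(X)$ because ``local $\pi$-bases in $X$ restrict appropriately'' to the closure of an open set. That restriction argument fails at boundary points: if $y\in\overline{U}_x\setminus U_x$ and $P$ is a $\pi$-base element of $X$ sitting inside a neighborhood of $y$, nothing forces $P$ to meet $\overline{U}_x$ at all, so $P\cap\overline{U}_x$ may be empty. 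Indeed $\pi\chi$ is not monotone with respect to regular-closed subsets: let $Z$ be the one-point Lindel\"ofication of a discrete set $D$ of size $\omega_1$ with extra point $y$, and let $X=Z\cup\{s_n:n<\omega\}$ where the $s_n$ are isolated and every neighborhood of $y$ in $X$ contains all but finitely many $s_n$. Then $\{\{s_n\}:n<\omega\}$ is a local $\pi$-base at $y$ and all other points are isolated, so $\pi\chi(X)=\omega$; but with $U=D$ (open) one has $\overline{U}=Z$ and $\pi\chi(y,\overline{U})=\omega_1$. (Here $U$ is not ccc, so this does not refute the precise instance you need, but it does show the general principle you invoke is false, and you give no argument for the ccc case.)

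The repair is trivial and is in fact what the paper does: never pass to closures at all. From a ccc neighborhood $F_x$ take the open ccc set $U_x=\mathrm{Int}(F_x)$ and apply Sun's inequality directly to it; for \emph{open} subspaces one does have $c(U_x)\le c(F_x)\le\omega$, $\pi\chi(U_x)\le\pi\chi(X)$ and $\psi_c(U_x)\le\psi_c(X)$, giving $|U_x|\le\pi\chi(X)^{\psi_c(X)}$, which is all you use afterwards, since the dense set you produce is $\bigcup\scr{V}$, a union of the open sets $U_x$, not of their closures. So there is no need for any bound on $|\overline{U}_x|$. You half-suggest this yourself (``replace $U_x$ by $\mathrm{Int}(\overline{U}_x)$''), but as written your primary argument rests on an unproved, and in general false, monotonicity claim, and you explicitly label it a non-issue; with the fix made, your proof coincides with the paper's.
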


\begin{proof}
For every $x\in X$ fix a neighborhood $F_x$ of $x$ which is ccc.
Since $X$ is Hausdorff, $\psi_c(X)$ is monotone, and $c(X)$ and $\pi\chi(X)$ are monotone with respect to open sets, for every $x\in X$ we have
$|\mathrm{Int}(F_x)|\le \pi\chi(\mathrm{Int}(F_x))^{c(\mathrm{Int}(F_x))\psi_c(\mathrm{Int}(F_x))}\le \pi\chi(X)^{\psi_c(X)}$.
Clearly $\scr{U}=\{\mathrm{Int}(F_x):x\in X\}$ is an open cover of $X$.
Hence, there exists $\scr{V}\in[\scr{U}]^{\leq wL(X)}$ such that $X=\overline{\Un\scr{V}}$.
Thus $\Un\scr{V}$ is dense in $X$ and $|\Un\scr{V}|\leq wL(X)\cdot \pi\chi(X)^{\psi_c(X)}\leq (wL(X)\pi\chi(X))^{\psi_c(X)}$. As $X$ is Hausdorff, we have
\begin{align}
|X|\leq d(X)^{\psi_c(X)t(X)}\leq |\Un\scr{V}|^{\psi_c(X)t(X)}&\leq\left(wL(X)\pi\chi(X))^{\psi_c(X)}\right)^{\psi_c(X)t(X)}\notag\\&=(wL(X)\pi\chi(X))^{\psi_c(X)t(X)}.\notag
\end{align}
\end{proof}

\begin{corollary}
If $X$ is locally ccc, then $|X|\leq wL(X)^{\chi(X)}$.
\end{corollary}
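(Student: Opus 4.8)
The plan is to deduce the corollary immediately from the preceding theorem, exactly in the spirit of the two corollaries that precede it (the locally $H$-closed and locally Lindel\"of cases). The preceding theorem asserts that if $X$ is locally ccc, then $|X|\leq (wL(X)\pi\chi(X))^{\psi_c(X)t(X)}$, so the whole task reduces to absorbing the factor $\pi\chi(X)$ and replacing $\psi_c(X)$ by $\psi(X)$ (and then $\chi(X)$) using the standard inequalities among cardinal functions. First I would recall the chain $\pi\chi(X)\leq\chi(X)$, $\psi_c(X)\leq\psi(X)\leq\chi(X)$, and $t(X)\leq\chi(X)$, all of which hold for arbitrary (Hausdorff) spaces. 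With these in hand the exponent $\psi_c(X)t(X)$ is at most $\chi(X)$, and both $wL(X)$ and $\pi\chi(X)$ are at most $\chi(X)\cdot wL(X)$; but in fact one wants $\pi\chi(X)$ in the base to be swallowed. Since $\pi\chi(X)\leq\chi(X)$ and $\chi(X)\leq 2^{\chi(X)}$ trivially, one has $(wL(X)\pi\chi(X))^{\psi_c(X)t(X)}\leq (wL(X))^{\chi(X)}\cdot\chi(X)^{\chi(X)}$; the second factor is $2^{\chi(X)}\leq wL(X)^{\chi(X)}$ only if $wL(X)\geq 2$, which is automatic since $wL(X)$ is an infinite cardinal. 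Hence $(wL(X)\pi\chi(X))^{\psi_c(X)t(X)}\leq wL(X)^{\chi(X)}$, giving $|X|\leq wL(X)^{\chi(X)}$.

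Concretely, the write-up is a two-line computation: starting from the theorem, bound the exponent using $\psi_c(X)t(X)\leq\chi(X)$, bound $\pi\chi(X)\leq\chi(X)\leq wL(X)^{\chi(X)}$ (since $wL(X)$ is infinite), and conclude
\[
|X|\leq (wL(X)\pi\chi(X))^{\psi_c(X)t(X)}\leq\bigl(wL(X)^{\chi(X)}\bigr)^{\chi(X)}=wL(X)^{\chi(X)}.
\]
This is entirely parallel to the proofs of the corollaries to Theorems~\ref{th4.2} and~\ref{th4.4}, where the base of the power was likewise collapsed to $wL(X)$ and the exponent to $\chi(X)$.

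I do not anticipate a genuine obstacle here, since the corollary is a direct simplification of a result already proved; the only point requiring (minor) care is the manipulation of cardinal arithmetic, specifically making sure that $\pi\chi(X)$ is correctly absorbed into the base rather than merely bounded by something in the exponent. One should be mildly careful that all the monotonicity facts used ($\pi\chi$, $\chi$, $t$, $\psi_c$, $wL$ comparisons) are the standard ones valid for all Hausdorff spaces and require no extra hypotheses beyond those already assumed in the theorem. Everything else is routine.
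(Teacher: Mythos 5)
Your derivation is correct and is exactly the intended one: the paper states this corollary without proof as an immediate consequence of the preceding theorem, using the standard inequalities $\pi\chi(X)\le\chi(X)$, $\psi_c(X)\le\chi(X)$, $t(X)\le\chi(X)$ for Hausdorff spaces and the absorption $(wL(X)\chi(X))^{\chi(X)}=wL(X)^{\chi(X)}$ since $wL(X)\ge 2$. Nothing further is needed.
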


In the power homogeneous (PH) setting, it was shown in Corollary 3.8 in \cite{BC2018} that if $X$ is locally compact and PH, then $|X|\leq 2^{wL(X)t(X)}$, an extension of De la Vega's Theorem for compact homogeneous spaces. The following 
theorem improves that corollary. 

\begin{theorem}\label{lcph}
If $X$ is a locally compact, power homogeneous space, then $|X|\leq wL(X)^{t(X)}$. 
\end{theorem}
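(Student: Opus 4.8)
The plan is to combine the local compactness bound from Theorem~\ref{th4.2} (or rather its compact-case analogue, Corollary~\ref{lc}) with the standard power-homogeneity machinery used by Ridderbos and by Carlson--Ridderbos, which shows that in a power homogeneous space a ``local'' cardinality bound on a dense set of points propagates to a global one in terms of tightness. First I would fix, for each $x\in X$, an open neighborhood $U_x$ with $\overline{U}_x$ compact. By the compact homogeneous case of De la Vega's theorem (applied to the compact space $\overline{U}_x$, which has $t(\overline{U}_x)\le t(X)$), or directly by the inequality $|\overline{U}_x|\le 2^{t(\overline{U}_x)}$ available in the compact setting, we get $|U_x|\le |\overline{U}_x|\le 2^{t(X)}$. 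So every point of $X$ has a neighborhood of size at most $2^{t(X)}$; since $\scr U=\{U_x:x\in X\}$ is an open cover, there is $\scr V\in[\scr U]^{\le wL(X)}$ with $\overline{\Un\scr V}=X$, giving a dense set $D=\Un\scr V$ with $|D|\le wL(X)\cdot 2^{t(X)}\le wL(X)^{t(X)}$ (using $2^{t(X)}\le wL(X)^{t(X)}$, which holds since $wL(X)\ge\omega$ would need care only when $wL(X)=\omega$, where it is just $2^{t(X)}=\omega^{t(X)}$).

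The next step is to upgrade ``$X$ has a dense set of size $\le wL(X)^{t(X)}$'' to ``$|X|\le wL(X)^{t(X)}$'' using power homogeneity. Here I would invoke the key lemma from the power-homogeneous literature: if $X$ is power homogeneous and $D\subseteq X$ is dense, then $|X|\le |D|^{t(X)}$ — more precisely, for every $x\in X$ one can, using a homeomorphism of $X^\kappa$ moving a point of $D^\kappa$ to a point with $x$ in a coordinate, together with the fact that points are determined by their closures of subsets of $D$ of size $\le t(X)$, conclude that $x$ lies in the closure of a subset of $D$ of size $\le t(X)$, and that the map $x\mapsto$ (such a witnessing set, together with a choice function) is finite-to-one enough to bound $|X|$ by $|D|^{t(X)}$. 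Granting this, we immediately get $|X|\le |D|^{t(X)}\le\left(wL(X)^{t(X)}\right)^{t(X)}=wL(X)^{t(X)}$, which is the desired inequality.

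The main obstacle, and the place where real work is needed, is the second step: making precise and correct the passage from a small dense set to a cardinality bound via power homogeneity. This is not the naive ``$|X|\le d(X)^{\psi_c(X)t(X)}$'' of~\cite{BC88} — there is no $\psi_c$ here, so one genuinely needs the homogeneity hypothesis. The standard route is through the notion of a \emph{tightness-witness} or via Ridderbos' result that in a power homogeneous space, $|X|\le d(X)^{t(X)}$ is false in general but $|X| \le d(X)^{\pi\chi(X)}$-type bounds hold; what actually works is De la Vega's argument adapted to the locally compact case, where local compactness supplies the needed ``$\psi$-like'' control on each $\overline{U}_x$ (compact Hausdorff spaces satisfy $\psi=\chi$ and are therefore well-behaved), and power homogeneity lets one transport this to all of $X$. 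Concretely I expect the proof to run through the free sequence / closed-pseudocharacter estimates internal to the compact pieces $\overline{U}_x$ and then glue via a homogeneity argument on $X^\kappa$; the bookkeeping to ensure the exponents collapse to exactly $t(X)$ (rather than $t(X)\psi(X)$ or $t(X)\chi(X)$) is the delicate point and is presumably where the authors use the full strength of ``power homogeneous'' rather than merely ``homogeneous.''
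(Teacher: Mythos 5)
Your overall skeleton (cover $X$ by neighborhoods with compact closures, extract a subfamily of size $\le wL(X)$ with dense union, bound that dense set, then finish with a power-homogeneity theorem of the form $|X|\le d(X)^{(\cdot)}$) is the same as the paper's, but the first step has a genuine gap. You bound $|U_x|\le|\overline{U}_x|\le 2^{t(X)}$ either by applying De la Vega's theorem to $\overline{U}_x$ or by citing ``$|K|\le 2^{t(K)}$ in the compact setting.'' Neither works: De la Vega's theorem needs $\overline{U}_x$ itself to be (power) homogeneous, and power homogeneity of $X$ is not inherited by the closed subspace $\overline{U}_x$; and the unconditional inequality $|K|\le 2^{t(K)}$ is simply false for compact Hausdorff spaces --- the one-point compactification of a discrete set of cardinality greater than $2^{\aleph_0}$ is compact with countable tightness. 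The whole point of the local estimate is that it needs the power homogeneity of the ambient space: the paper invokes Theorem 3.6 of \cite{BC2018}, which says that for a power homogeneous $X$ and any non-empty open $U\subseteq X$ one has $|U|\le 2^{L(\overline{U})t(X)pct(X)}$; with $\overline{U}_x$ compact this gives $|U_x|\le 2^{t(X)}$ (here $pct(X)=\omega$ by local compactness). Without this, or some equivalent ``local De la Vega'' for PH spaces, your dense set bound $|D|\le wL(X)^{t(X)}$ is unjustified.

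Your second step is essentially on the right track but is left unresolved: the correct and quite short completion is exactly the ``$d(X)^{\pi\chi(X)}$-type bound'' you mention in passing, namely Ridderbos' theorem that every power homogeneous Hausdorff space satisfies $|X|\le d(X)^{\pi\chi(X)}$, combined with $\pi\chi(X)\le t(X)pct(X)=t(X)$ (for locally compact spaces one can also get $\pi\chi(X)\le t(X)$ directly from \v{S}apirovski\u{\i}'s inequality inside the compact closures). There is no need for the free-sequence or $X^\kappa$-gluing machinery you speculate about, and no hidden ``$\psi$-like'' control is required at this stage; once the dense set of size $\le wL(X)^{t(X)}$ is in hand, $|X|\le d(X)^{\pi\chi(X)}\le\bigl(wL(X)^{t(X)}\bigr)^{t(X)}=wL(X)^{t(X)}$ finishes the proof. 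So the two places where power homogeneity is genuinely used are the local bound (BC2018, Theorem 3.6) and Ridderbos' global bound; your write-up identifies neither precisely, and the first is replaced by a false claim.
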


\begin{proof}
For every $x\in X$ there exists an open set $U_x$ such that $x\in U_x$ and $\overline{U}_x$ is compact. Using Theorem 3.6 from \cite{BC2018}, 
which states that if $X$ is a power homogeneous space and $U \subseteq X$ is a non-empty open set, then $|U| \le 2^{L(\overline{U})t(X)pct(X)}$, 
we conclude that for each $x\in X$, $|U_x|\leq 2^{t(X)}$. (Note that $pct(X)$ is countable as $X$ is locally 
compact). Then $\scr{U}=\{U_x:x\in X\}$ is an open cover of $X$. Hence, there exists $\scr{V}\in[\scr{U}]^{\leq wL(X)}$ 
such that $X=\overline{\Un\scr{V}}$. Thus $\Un\scr{V}$ is dense in $X$ and 
$|\Un\scr{V}|\leq wL(X)\cdot 2^{t(X)}\leq wL(X)^{t(X)}$.

It was shown by Ridderbos in \cite{rid2006} that the cardinality of a power homogeneous Hausdorff space is at most 
$d(X)^{\pi\chi(X)}$. After noting that in our case $\pi\chi(X)\leq t(X)pct(X)=t(X)$, we conclude that
$$|X|\leq d(X)^{\pi\chi(X)}\leq |\Un\scr{V}|^{t(X)}\leq\left(wL(X)^{t(X)}\right)^{t(X)}=wL(X)^{t(X)}.$$
\end{proof}

We note that the proof of the above theorem is in fact simpler than the proof of Corollary 3.8 in \cite{BC2018}. The proof of 
that corollary there fundamentally relies on the main theorem in that paper (Theorem 2.3). However, the proof above does 
not.

Our next theorem shows that Bella--Carlson's inequality $|X|\le 2^{\psi(X)t(X)wL(X)}$, which was proved in \cite{BC2018} 
for every regular $T_1$-space $X$ with a $\pi$-base whose elements have compact closures, is also valid for Hausdorff 
spaces satisfying the same condition but after replacing in it $\psi(X)$ with $\psi_c(X)$. (We recall that a \emph{$\pi$-base} 
for a space $X$ is a collection $\scr V$ of non-empty open sets in $X$ such that if $U$ is any non-empty open set in $X$, 
then there exists $V \in \scr V$ such that $V \subset U$.) 

First we make the following observation.

\begin{lemma}\label{LCPB}
If $X$ is a space with a $\pi$-base $\scr{B}$ whose elements have compact closures and $F\subsetneq X$ is 
closed, then there exists $B\in\scr{B}$ such that $\cl{B}\cap F=\emptyset$.
\end{lemma}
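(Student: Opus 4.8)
The statement to prove is Lemma~\ref{LCPB}: if $X$ has a $\pi$-base $\scr{B}$ whose elements have compact closures and $F\subsetneq X$ is closed, then some $B\in\scr{B}$ satisfies $\cl{B}\cap F=\es$. The plan is to pick a point $x\in X\minus F$ (possible since $F\neq X$) and to separate $x$ from $F$ using compactness of $\pi$-base closures together with the Hausdorff axiom. First I would choose, by the $\pi$-base property, some $B_0\in\scr{B}$ with $B_0\sse X\minus F$; this already gives $B_0\meet F=\es$ but \emph{not} $\cl{B_0}\meet F=\es$, so more work is needed. The key point is that $\cl{B_0}$ is compact, hence a compact Hausdorff (thus regular) subspace, so inside $\cl{B_0}$ we can separate the point $x\in B_0$ from the closed set $F\meet\cl{B_0}$ by disjoint relatively open sets; intersecting with the open set $B_0$ turns this into an honest open neighborhood $U$ of $x$ in $X$ with $\cl{U}\sse\cl{B_0}$ and $\cl{U}\meet F=\es$.

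Concretely, having fixed such a $U\ni x$ with $\cl{U}\meet F=\es$, I then apply the $\pi$-base property once more to obtain $B\in\scr{B}$ with $B\sse U$. Then $\cl{B}\sse\cl{U}$, so $\cl{B}\meet F=\es$, which is exactly what is required. The only subtlety is the separation step: I want to produce an open $U\ni x$ whose \emph{closure in $X$} misses $F$. For this it is cleanest to work as follows. The set $K=\cl{B_0}\meet F$ is a closed subset of the compact Hausdorff space $\cl{B_0}$ not containing $x$, so by regularity of $\cl{B_0}$ there is a relatively open set $W$ in $\cl{B_0}$ with $x\in W$ and $\cl{W}^{\cl{B_0}}\meet K=\es$; since $\cl{B_0}$ is compact (hence closed in $X$), $\cl{W}^{\cl{B_0}}=\cl{W}^{X}$, and moreover $\cl{W}^X\sse\cl{B_0}$ so $\cl{W}^X\meet F=\cl{W}^X\meet(\cl{B_0}\meet F)=\es$. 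Now set $U=W\meet B_0$, which is open in $X$ (as $W$ is open in the subspace $\cl{B_0}$ and $B_0$ is open in $X$, $W\meet B_0$ is open in $X$) and contains $x$; then $\cl{U}^X\sse\cl{W}^X$, hence $\cl{U}^X\meet F=\es$.

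The main obstacle, such as it is, is bookkeeping about closures in subspaces versus closures in $X$; the argument relies on the fact that a compact subset of a Hausdorff space is closed, so that closures taken inside $\cl{B_0}$ agree with closures taken in $X$, and that compact Hausdorff spaces are regular. No hypothesis beyond Hausdorffness of $X$ (already standing throughout the paper) and the stated $\pi$-base assumption is needed. I would write the proof in the order: (1) choose $x\notin F$ and $B_0\in\scr{B}$ with $\cl{B_0}$ compact and $B_0\meet F=\es$; (2) use regularity of the compact Hausdorff space $\cl{B_0}$ to separate $x$ from $\cl{B_0}\meet F$, obtaining $U$ open in $X$ with $x\in U$ and $\cl{U}\meet F=\es$; (3) apply the $\pi$-base property to $U$ to get $B\in\scr{B}$ with $B\sse U$, whence $\cl{B}\sse\cl{U}$ and $\cl{B}\meet F=\es$.
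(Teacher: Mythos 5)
Your proposal is correct and follows essentially the same route as the paper's proof: take $B_0\in\scr{B}$ inside the open set $X\setminus F$, use regularity of the compact Hausdorff space $\cl{B_0}$ to shrink to an open set whose closure (taken in $X$, which agrees with the closure in $\cl{B_0}$ since $\cl{B_0}$ is closed in $X$) misses $F$, and then apply the $\pi$-base property once more. Only cosmetic fix: choose the point $x$ \emph{inside} $B_0$ after $B_0$ has been selected (as the paper does with $y\in B_1$), since picking $x\in X\setminus F$ first does not guarantee $x\in B_0$; the rest of your argument already uses $x$ only as a point of $B_0$.
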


\begin{proof}
Since $F$ is closed and $X\setminus F\ne\emptyset$, there is $B_1 \in \scr{B}$ such that $B_1 \subseteq X\setminus F$. 
If $\cl{B_1}\cap F=\emptyset$, then $B=B_1$. If $\cl{B_1}\cap F\ne\emptyset$, then clearly 
$\cl{B_1}\cap F \subset \cl{B_1}\setminus B_1$. Let $L=\cl{B_1}\setminus B_1$ and $y\in B_1$. 
Then $L$ is closed in $\cl{B_1}$ and since 
$\cl{B_1}$ is compact and Hausdorff there is an open set $B_2$ in $\cl{B_1}$ such that $y\in B_2$ and 
$\cl{B_2}\cap L=\emptyset$. Then $B_2\subset B_1$ and since $B_1$ is open in $X$, $B_2$ is also open in $X$. 
Since $\scr{B}$ is a $\pi$-base in $X$, there is a non-empty open set $B_3\in\scr{B}$ such that $B_3\subset B_2$. Clearly, 
$\cl{B_3}\cap F=\emptyset$ and we can choose $B=B_3$.
\end{proof}

\begin{theorem}\label{TBCG}
If $X$ is a space with a $\pi$-base whose elements have compact closures, then 
$|X|\le 2^{\psi_c(X)t(X)wL(X)}$.
\end{theorem}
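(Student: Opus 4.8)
The plan is to follow the same local-to-global strategy used in Theorem~\ref{th4.2} and Theorem~\ref{th4.4}, but with the density bound replaced by an argument that produces a dense set of size at most $2^{\psi_c(X)t(X)wL(X)}$ directly from the $\pi$-base, together with the closing inequality $|X|\le d(X)^{\psi_c(X)t(X)}$ for Hausdorff spaces. Write $\kappa=\psi_c(X)t(X)wL(X)$. First I would fix a $\pi$-base $\scr{B}$ whose members have compact closures. The key geometric input is Lemma~\ref{LCPB}: every proper closed subset of $X$ is missed by the closure of some member of $\scr{B}$. This is exactly what is needed to run a closing-off construction building a dense subspace of controlled size.

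Next I would carry out an elementary-submodel-free closing-off argument (or, if preferred, an elementary submodel $M$ with $\scr{B},X\in M$, $|M|\le 2^\kappa$, and ${}^{\le\kappa}M\subseteq M$). The idea: build an increasing chain of subsets. Start with any point; at each successor stage, for the current set $Y$ consider $\cl{Y}$; if $\cl{Y}=X$ we stop, otherwise apply Lemma~\ref{LCPB} to $F=\cl{Y}$ to get $B\in\scr{B}$ with $\cl{B}\cap\cl{Y}=\emptyset$ — but that is the wrong direction, since we want to grow $Y$ toward density, not away from it. The correct move is instead: whenever a closed set $F$ is ``small'' (built from $\le\kappa$ data) and $F\neq X$, witness its failure to be all of $X$ by throwing in a point outside $F$. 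More precisely, I would build $Y$ of size $\le 2^\kappa$ such that for every $A\in[Y]^{\le\kappa}$ we have $\cl{A}\subseteq Y$ (closure-closed for $\kappa$-sized subsets, using $t(X)\le\kappa$), and such that for every $B\in\scr{B}\cap M$ we chose a point of $B$ into $Y$. Since $\scr{B}$ is a $\pi$-base and $|\scr{B}\cap M|$ is small, this makes $Y$ dense — given any nonempty open $U$, pick $B\in\scr{B}$, $B\subseteq U$; reflection down to $M$ gives such a $B$ in $M$, hence a point of $Y$ in $B\subseteq U$. One also needs $|\scr{B}|\le 2^\kappa$, which follows because a $\pi$-base of minimal size has size $\pi w(X)$, and one can bound $\pi w(X)\le \dots$; alternatively, just take $\scr{B}$ to have size $\pi w(X)\le |X|$ and let the submodel handle it, or observe $\pi\chi(X)\le t(X)$-type bounds are not needed since $d(X)\le d(X)$ trivially controls things once $Y$ is dense and $|Y|\le 2^\kappa$. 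Then $d(X)\le|Y|\le 2^\kappa$, so $|X|\le d(X)^{\psi_c(X)t(X)}\le (2^\kappa)^{\psi_c(X)t(X)}=2^\kappa$.

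The more delicate point, and where I expect the real work to lie, is locating the dense set of size $\le 2^\kappa$ without simply assuming $|X|$ is already bounded (which would be circular). The clean way is the elementary submodel: take $M\prec H(\theta)$ with $\scr{B},X,\tau\in M$, $|M|=2^\kappa$, and $2^\kappa\subseteq M$ (so in particular every $\kappa$-sequence of elements of $M$ lies in $M$). Set $Y=X\cap M$. Then $|Y|\le 2^\kappa$. I claim $Y$ is dense: if not, $X\setminus\cl{Y}$ is a nonempty open set, so by $\pi$-baseness there is $B\in\scr{B}$ with $B\subseteq X\setminus\cl{Y}$; but then $\cl{B}$ is compact and disjoint from $Y$. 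Now $\cl{Y}$ need not be in $M$, so I would instead argue as in standard submodel cardinality proofs: one shows directly that $X\cap M$ is closed under the relevant operations and that any point of $X$ outside $X\cap M$ would contradict $wL(X)\le\kappa$ and $\psi_c(X)\le\kappa$, exactly in the pattern of the Bella–Carlson proof of the regular case (their Theorem in \cite{BC2018} for $\pi$-bases with compact closures) — the only change being that the compact closures let us use $\psi_c$ rather than $\psi$, since in a compact Hausdorff neighborhood closed pseudocharacter and pseudocharacter coincide, and more importantly closed pseudocharacter behaves well under the relevant reflection. So the main obstacle is bookkeeping: adapting the regular-space closing-off in \cite{BC2018} to the Hausdorff setting by systematically replacing each appeal to regularity (used there to separate a point from a closed set) with an appeal to Lemma~\ref{LCPB} and the compactness of $\cl{B}$ for $B\in\scr{B}$, and checking that $\psi_c(X)$ is the right invariant at every step. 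Once the dense set $Y$ with $|Y|\le 2^\kappa$ is in hand, the final inequality $|X|\le d(X)^{\psi_c(X)t(X)}\le 2^{\psi_c(X)t(X)wL(X)}$ is immediate.
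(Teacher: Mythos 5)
Your high-level instincts are partly right --- Lemma~\ref{LCPB}, together with the compactness of $\overline{B}$ for $B\in\mathcal{B}$ and families $\mathcal{V}_x$ witnessing $\psi_c(X)\le\kappa$ closed under finite intersections, is exactly what replaces regularity --- but the core of the argument is missing, and the explicit constructions you sketch in its place break down. The reflection step is unjustified: an arbitrary nonempty open $U$ (and likewise $\overline{Y}$) need not belong to $M$, so elementarity tells you nothing about it; to conclude that some $B\in\mathcal{B}\cap M$ sits inside $U$ you would need $\mathcal{B}\subseteq M$, i.e.\ $|\mathcal{B}|\le 2^{\kappa}$, which is precisely the kind of bound you cannot assume without circularity (your own sentence about $\pi w(X)$ trails off at this point). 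More seriously, you dismiss as ``the wrong direction'' exactly the move the proof needs. The paper's proof builds by transfinite recursion an increasing chain of \emph{open} sets $U_\alpha$, where at stage $\alpha+1$ one adds, for every $\mathcal{M}\in[\bigcup\{\mathcal{V}_x:x\in\overline{U_\alpha}\}]^{\le\kappa}$ with $X\setminus\overline{\bigcup\mathcal{M}}\ne\emptyset$, a $\pi$-base element $B_{\mathcal{M}}\subseteq X\setminus\overline{\bigcup\mathcal{M}}$; the compact closures keep $|\overline{U_\alpha}|\le 2^{\kappa}$. The reason for adding $\pi$-base elements rather than points is that the limit $F=\bigcup_{\alpha}\overline{U_\alpha}=\overline{\bigcup_{\alpha}U_\alpha}$ (closed by $t(X)\le\kappa$) is then \emph{regular-closed}, and $wL(X)$ is monotone on regular-closed sets but not on arbitrary closed sets.

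That is where your points-only closing-off (and equally $Y=X\cap M$) fails: it produces a merely closed set $Y$, and at the decisive step --- having obtained $B$ with $\overline{B}\cap Y=\emptyset$ from Lemma~\ref{LCPB} and, by compactness, $V_x\in\mathcal{V}_x$ with $\overline{V_x}\cap\overline{B}=\emptyset$ for each $x\in Y$ --- you must extract $\mathcal{M}\in[\{V_x:x\in Y\}]^{\le\kappa}$ with $Y\subseteq\overline{\bigcup\mathcal{M}}$. For a non-regular-closed $Y$ the hypothesis $wL(X)\le\kappa$ does not provide this (padding the cover with $X\setminus Y$ only controls the interior of $Y$, which may be empty). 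In the paper's construction $F$ is regular-closed, so the extraction is legitimate, and then condition (2) of the recursion gives $B_{\mathcal{M}}\subseteq F\setminus\overline{\bigcup\mathcal{M}}=\emptyset$, a contradiction; hence $X=F$ and $|X|\le 2^{\kappa}$ outright, with the inequality $|X|\le d(X)^{\psi_c(X)t(X)}$ used only inside the recursion to bound $|\overline{U_\alpha}|$, not as a final density step. So the missing piece is not bookkeeping: it is the design of the recursion (adjoin $\pi$-base elements missing $\overline{\bigcup\mathcal{M}}$ so that the final set is regular-closed), which is exactly what makes the appeal to $wL(X)$ valid, and your proposal leaves that step unsupported.
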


\begin{proof}
Let $\kappa=\psi_{c}(X)t(X)wL(X)$ and let $\scr{B}$ be a $\pi$-base of non-empty open sets with compact closures. 
Notice that for each $B\in\scr{B}$ if $\psi(\cl{B})$ is the pseudocharacter of the space $\cl{B}$, then we have 
$\psi(\cl{B})\le\psi_c(X)$ and since $\cl{B}$ is compact and Hausdorff, we have 
$\cl{B}\le 2^{\psi(\cl{B})}\le 2^\kappa$. Since $\psi_c(X)\le \kappa$, for each $x\in X$ we can fix a collection $\scr{V}_x$ 
of open neighborhoods of $x$ such that $|\scr{V}_x|\le \kappa$ and $\bigcap\{\cl{V}:V\in \scr{V}_x\}=\{x\}$. Without 
loss of generality we may assume that each $\scr{V}_x$ is closed under finite intersections.

We will construct by transfinite reqursion a non-decreasing chain of open sets $\{U_\alpha : \alpha < \kappa^+\}$ such that 
\begin{itemize}
\item[(1)] $\cl{U_\alpha} \le 2^\kappa$ for every $\alpha < \kappa^+$, and 
\item[(2)] if $X\setminus\cl{\bigcup \scr{M}} \ne \emptyset$ for some 
$\scr{M} \in [\bigcup\{\scr{V}_x : x \in \cl{U_\alpha}\}]^{\le \kappa}$, 
then there is $B_\scr{M}\in\scr{B}$ such that $B_\scr{M}\subset U_{\alpha+1}\setminus\cl{\bigcup \scr{M}}$.
\end{itemize}

Let $B_0\in\scr{B}$ be arbitrary. We set $U_0=B_0$.  Then $|\cl{U_0}|\le 2^\kappa$. 
If $\beta =\alpha + 1$, for some $\alpha$, then for every 
$\scr{M} \in [\bigcup\{\scr{V}_x : x \in \cl{U_\alpha}\}]^{\le \kappa}$ such that $X\setminus\cl{\bigcup\scr{M}} \ne \emptyset$, 
we choose $B_{\scr{M}} \in \scr{B}$ such that $B_{\scr{M}} \subseteq X\setminus\cl{\bigcup \scr{M}}$. 
We define $U_{\beta} = U_\alpha\cup\bigcup\{B_\scr{M} : \scr{M} \in [\bigcup\{\scr{V}_x : x \in \cl{U_\alpha}\}]^{\le \kappa}, X\setminus\cl{\bigcup\scr{M}}\ne\emptyset\}$. Therefore, since $X$ is Hausdorff, we have  
$|\cl{U_{\beta}}| \le 2^\kappa$. If $\beta<\kappa^+$ is a limit ordinal we let 
$U_\beta = \bigcup_{\alpha<\beta} U_\alpha$. Then clearly $|U_\beta|\le 2^\kappa$, hence 
$|\cl{U_\beta}| \le 2^\kappa$.

Let $F = \bigcup\{\cl{U_\alpha} : \alpha < \kappa^+\}$. Then $|F| \le 2^\kappa$. 
Since $t(X)\le\kappa$, $F$ is closed and therefore $F = \cl{\bigcup\{U_\alpha : \alpha < \kappa^+\}}$. 
Thus, $F$ is a regular-closed set. 

We will show that $X = F$. Suppose that $X \ne F$. Since $F$ is closed and $\scr{B}$ is a $\pi$-base, there is $B\in\scr{B}$
such that $B\subset X\setminus F$ and $\cl{B}\cap F=\emptyset$ (Lemma \ref{LCPB}).
Then for every $x\in F$ and $y\in\cl{B}$ there is 
$V_x(y)\in \scr{V}_x$ such that $y\notin \cl{V_x(y)}$. Therefore, using the compactness of $\cl{B}$ and the fact that each 
$\scr{V}_x$ is closed under finite intersections, for every $x\in F$ we can find 
$V_x \in \scr{V}_x$ such that $\cl{V_x} \cap \cl{B}=\emptyset$; hence $\cl{B}\cap V_x=\emptyset$.
Clearly $\{V_x : x \in F\}$ is an open cover of $F$. Since $wL(X)$ is monotone with respect to regular-closed sets, there
exists $\scr{M} \in \{V_x : x \in F\}^{\le \kappa}$ such that $F \subseteq \cl{\bigcup\scr{M}}$. Then there 
exists $\alpha < \kappa^+$ such that $\scr{M} \in [\bigcup\{\scr{V}_x : x \in \cl{U_\alpha}\}]^{\le \kappa}$. 
As $\cl{B} \cap \bigcup \scr{M} = \emptyset$ it follows that $B\subset X\setminus\cl{\bigcup\scr{M}}$, hence 
$X\setminus\cl{\bigcup\scr{M}}\ne\emptyset$. Thus, there exists $B_\scr{M}\in\scr{B}$ such that 
$\emptyset\ne B_\scr{M} \subseteq U_{\alpha+1}\setminus\cl{\bigcup\scr{M}} \subseteq F\setminus\cl{\bigcup\scr{M}} = \emptyset$. 
Since this is a contradiction, we conclude that $X = F$ and the proof is completed.
\end{proof}

\begin{corollary} If $X$ is a Hausdorff space with a dense set of isolated points, then 
$|X|\le 2^{\psi_c(X)t(X)wL(X)}$.
\end{corollary}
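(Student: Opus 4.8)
The plan is to reduce the statement directly to Theorem~\ref{TBCG}: a dense set of isolated points gives, essentially for free, a $\pi$-base whose elements have compact closures, and then the bound is immediate.

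First I would let $I$ denote the set of isolated points of $X$, which by hypothesis is dense in $X$. For each $x\in I$ the singleton $\{x\}$ is open, since that is precisely what it means for $x$ to be isolated. Next I would note that, as $X$ is Hausdorff and hence $T_1$, every singleton is closed, so $\cl{\{x\}}=\{x\}$; in particular $\cl{\{x\}}$ is a one-point space and therefore compact. This is the only place where a separation axiom enters, and it is the point worth stating explicitly.

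I would then verify that $\scr{B}=\{\{x\}:x\in I\}$ is a $\pi$-base for $X$: given any non-empty open set $U\subseteq X$, density of $I$ produces a point $x\in U\cap I$, and then $\{x\}\in\scr{B}$ with $\{x\}\subseteq U$. Thus $\scr{B}$ is a $\pi$-base all of whose members have compact closures, so Theorem~\ref{TBCG} applies verbatim and yields $|X|\le 2^{\psi_c(X)t(X)wL(X)}$.

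I do not expect a genuine obstacle here: the entire technical content of the corollary lives in Theorem~\ref{TBCG}, and the remaining argument amounts only to the observation that the singletons of isolated points constitute a $\pi$-base with compact-closure elements. The write-up should therefore be a short paragraph, taking care only to invoke the $T_1$ property (a consequence of the standing Hausdorff assumption) when asserting that these closures are the singletons themselves, hence compact.
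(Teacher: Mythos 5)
Your proposal is correct and is exactly the intended argument: the paper states this as an immediate corollary of Theorem~\ref{TBCG}, the point being that the singletons of isolated points (closed since $X$ is Hausdorff, hence with trivially compact closures) form a $\pi$-base by density. Nothing more is needed.
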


\begin{corollary} Let $X$ be a space with a dense set of isolated points.

{\rm (a) [Dow--Porter, 1982 \cite{DowPor82}]} If $X$ is Hausdorff, then $|X|\le 2^{\chi(X)wL(X)}$.

{\rm (b) [Alas, 1993 \cite{Ala93}]} If $X$ is Hausdorff, then $|X|\le 2^{\psi_c(X)t(X)wL_c(X)}$.

{\rm (c)  [Bella--Carlson, 2017 \cite{BC2018}]} If $X$ is regular and $T_1$, then $|X|\le 2^{\psi(X)t(X)wL(X)}$.
\end{corollary}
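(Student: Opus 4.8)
The plan is to obtain all three items as immediate consequences of the preceding corollary, which gives $|X|\le 2^{\psi_c(X)t(X)wL(X)}$ for every Hausdorff space with a dense set of isolated points. That corollary is itself the instance of Theorem~\ref{TBCG} in which the $\pi$-base consists of the singletons $\{x\}$ with $x$ isolated: each such singleton equals its own (compact) closure, and the collection is a $\pi$-base precisely because every non-empty open set meets the dense set of isolated points. So once that corollary is invoked, the only work left is to compare cardinal functions.

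First I would handle (a). For any space $t(X)\le\chi(X)$, and for a Hausdorff space $\psi_c(X)\le\chi(X)$: given $x$ and $y\ne x$, pick disjoint open sets $V\ni x$ and $W\ni y$ and a basic open neighborhood $U\subseteq V$ of $x$; then $U\cap W=\emptyset$, so $y\notin\overline{U}$, which shows that the closures of a neighborhood base at $x$ intersect in $\{x\}$. Hence $\psi_c(X)t(X)wL(X)\le\chi(X)wL(X)$, and the corollary yields $|X|\le 2^{\chi(X)wL(X)}$. For (b) I would note only that $wL(X)\le wL_c(X)$, which is immediate from the definitions since $X$ is a closed subset of itself; therefore $|X|\le 2^{\psi_c(X)t(X)wL(X)}\le 2^{\psi_c(X)t(X)wL_c(X)}$. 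For (c), recall that in a regular space $\psi(X)=\psi_c(X)$ (shrink a pseudocharacter family of open sets to one of closed neighborhoods using regularity), so in the $T_3$ case the corollary reads $|X|\le 2^{\psi(X)t(X)wL(X)}$.

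Since each of the three inequalities is in this way a direct weakening of an already-established bound, I do not anticipate any genuine obstacle. The only point that deserves a line of care is verifying that a dense set of isolated points does supply a $\pi$-base whose members have compact closures, so that Theorem~\ref{TBCG} (equivalently, the preceding corollary) genuinely applies; everything else is routine monotonicity and standard identities among cardinal invariants.
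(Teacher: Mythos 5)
Your derivation is correct and is exactly the route the paper intends: the paper states these three classical bounds without proof as immediate consequences of the preceding corollary (the singleton-$\pi$-base instance of Theorem~\ref{TBCG}), weakened via the standard relations $\psi_c(X)t(X)\le\chi(X)$ for Hausdorff $X$, $wL(X)\le wL_c(X)$, and $\psi(X)=\psi_c(X)$ in regular spaces. No gaps; your one flagged point of care (that dense isolated points give a $\pi$-base of compact-closure singletons) is handled correctly.
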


\begin{lemma}\label{LBC}
Let $X$ be a space with a dense subset $D \subseteq X$ such that each $d \in D$ has a
closed neighborhood that is compact, $H$-closed, normal, Lindel\"of, or ccc. Then $X$ has a $\pi$-base $\scr{B}$ such that
$\cl{B}$ is respectively compact, $H$-closed, normal, Lindel\"of, or ccc, whenever $B\in\scr{B}$.
\end{lemma}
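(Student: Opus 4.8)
The plan is to work locally around each point of the dense set $D$ and then patch the resulting local $\pi$-bases together, using the fact that the union of $\pi$-bases for the members of an open cover of a dense subset is a $\pi$-base for the whole space. First I would fix, for each $d \in D$, a closed neighborhood $K_d$ of $d$ having the relevant property $\scr{P}$ (compact, $H$-closed, normal, Lindel\"of, or ccc), and let $W_d = \interior{K_d}$, so that $W_d$ is a non-empty open set with $\cl{W_d} \subseteq K_d$. The crucial point here is that for every one of these five properties, if $G$ is a non-empty open subset of a space with property $\scr{P}$, then $\cl{G}$ (closure taken in $K_d$, which since $K_d$ is closed equals the closure in $X$) again has property $\scr{P}$: this is clear for compactness and for ccc, it holds for Lindel\"of and normal because these pass to regular-closed (indeed closed, resp.\ closed) subsets, and it holds for $H$-closedness because a regular-closed subset of an $H$-closed space is $H$-closed. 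So every regular-closed subset of $W_d$ (equivalently, the closure of every open subset of $W_d$) has property $\scr{P}$.

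Next I would assemble a candidate $\pi$-base. For each $d \in D$ fix a $\pi$-base $\scr{B}_d$ for the subspace $W_d$ consisting of non-empty sets open in $W_d$; since $W_d$ is open in $X$, each member of $\scr{B}_d$ is open in $X$ and has $X$-closure contained in $K_d$, hence (by the previous paragraph) that closure has property $\scr{P}$. Put $\scr{B} = \bigcup\{\scr{B}_d : d \in D\}$. I would then check that $\scr{B}$ is a $\pi$-base for $X$: given any non-empty open $U \subseteq X$, density of $D$ gives some $d \in D \cap U$; but $d$ need not lie in the interior $W_d$ of its neighborhood $K_d$, so instead I would use that $U \cap W_d$ is a neighborhood of... — more carefully: $d \in D \cap U$ and $K_d$ is a \emph{neighborhood} of $d$, so $d \in W_d$, and hence $U \cap W_d$ is a non-empty open subset of $W_d$; since $\scr{B}_d$ is a $\pi$-base for $W_d$, there is $B \in \scr{B}_d \subseteq \scr{B}$ with $B \subseteq U \cap W_d \subseteq U$. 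Thus $\scr{B}$ is a $\pi$-base for $X$ and every $B \in \scr{B}$ has $\cl{B}$ with the required property, completing the proof.

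The one place that needs genuine care — and the main obstacle — is the claim that each of the five properties is inherited by the closure of an open subset; this is the step I would state explicitly as a short sub-claim and verify property by property, being especially attentive to $H$-closedness, where one uses that a regular-closed subspace of an $H$-closed space is $H$-closed, and to normality and Lindel\"ofness, where the inheritance is by closed (not merely regular-closed) subspaces but a regular-closed set is in particular closed. Everything else is the routine bookkeeping of gluing local $\pi$-bases, which I would keep brief.
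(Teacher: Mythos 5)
Your proof is correct and follows essentially the same route as the paper's: both rest on the facts that compactness, normality and Lindel\"ofness pass to closed subsets while $H$-closedness and ccc pass to regular-closed subsets, applied to closures of open sets lying inside the interiors of the given closed neighborhoods of points of $D$. The only difference is presentational: the paper simply takes $B=U\cap V$ for a suitable open $V$ around a point of $D\cap U$, whereas you glue together local $\pi$-bases of the interiors, which amounts to the same argument.
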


\begin{proof}
Let $U$ be a non-empty open set in $X$. Then there exists
$d \in U \cap D$ and an open set $V$ containing $d$ such that $\cl{V}$ is compact, $H$-closed,
normal, Lindel\"of, or ccc. Let $B = U\cap V$. As the $H$-closed
and c.c.c properties are hereditary on regular-closed sets, and compactness, normality
and Lindel\"ofness are hereditary on closed sets, we conclude that $\cl{B}$ is respectively compact, $H$-closed,
normal, Lindel\"of, or ccc. As $B \subseteq U$, this shows that $X$ has a $\pi$-base $\scr{B}$ such that
$\cl{B}$ is respectively compact, $H$-closed, normal, Lindel\"of, or ccc, whenever $B\in\scr{B}$.
\end{proof}

As a direct corollary of Theorem \ref{TBCG} and Lemma \ref{LBC} we obtain the following theorem:

\begin{theorem}
If $X$ is a space with a dense subset $D \subseteq X$ such that each $d \in D$ has a
compact neighborhood, then $|X|\le 2^{\psi_c(X)t(X)wL(X)}$.
\end{theorem}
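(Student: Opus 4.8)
The plan is to deduce this directly from Lemma~\ref{LBC} and Theorem~\ref{TBCG}, exactly as the sentence preceding the statement indicates. The only point that needs a word of justification is that the hypothesis here (each $d\in D$ has a \emph{compact} neighborhood) matches the hypothesis of Lemma~\ref{LBC} (each $d\in D$ has a \emph{closed} neighborhood that is compact). Since all spaces in this paper are Hausdorff and a compact subset of a Hausdorff space is closed, any compact neighborhood of $d$ is already a closed neighborhood of $d$ that is compact. Hence the hypothesis of Lemma~\ref{LBC}, with the ``compact'' alternative selected, is satisfied.

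First I would apply Lemma~\ref{LBC} (in its ``compact'' case) to conclude that $X$ admits a $\pi$-base $\scr{B}$ such that $\cl{B}$ is compact for every $B\in\scr{B}$; that is, $X$ has a $\pi$-base whose elements have compact closures. Next I would invoke Theorem~\ref{TBCG} verbatim for this $\pi$-base, which yields $|X|\le 2^{\psi_c(X)t(X)wL(X)}$, completing the argument.

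There is essentially no obstacle: both ingredients are already established in the excerpt, and the remaining step is the elementary observation about compact subsets of Hausdorff spaces being closed. The only thing to be careful about in the write-up is to phrase the reduction so that the reader sees why ``compact neighborhood'' is a special case of the (disjunctive) hypothesis of Lemma~\ref{LBC}, rather than leaving it implicit.

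Accordingly, a complete proof reads: By the Hausdorff assumption, every compact neighborhood of a point is closed, so each $d\in D$ has a closed neighborhood that is compact. By Lemma~\ref{LBC}, $X$ has a $\pi$-base $\scr{B}$ such that $\cl{B}$ is compact for each $B\in\scr{B}$. Theorem~\ref{TBCG} then gives $|X|\le 2^{\psi_c(X)t(X)wL(X)}$.
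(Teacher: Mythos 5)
Your proof is correct and follows exactly the paper's route: the paper also obtains this theorem as a direct corollary of Lemma~\ref{LBC} (compact case) and Theorem~\ref{TBCG}. Your added remark that a compact neighborhood in a Hausdorff space is automatically a closed neighborhood that is compact is a sound, if minor, clarification of why the hypothesis of Lemma~\ref{LBC} applies.
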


In relation to spaces with $\pi$-bases whose elements have compact closures we can make also the following observation for which we need to 
recall the following definitions: The \emph{$\theta$-closure} of a set $A$ in a space $X$, denoted by $\mathrm{cl}_\theta(A)$, is the set of all points 
$x\in X$ such that for every open neighborhood $U$ of $x$ we have $\overline{U}\cap A \ne \emptyset$; $A$ is
called \emph{$\theta$-dense} in $X$ if $\mathrm{cl}_\theta(A)=X$; and
the $\theta$-density of a space $X$ is $d_\theta(X)=\min\{|A|:A\subset X, \mathrm{cl}_\theta(A)=X\}$.

\begin{theorem}\label{TCTC}
If $X$ is a space with a $\pi$-base whose elements have compact closures, then $d(X)=d_\theta(X)$.
\end{theorem}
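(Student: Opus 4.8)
The plan is to show the two inequalities $d(X)\le d_\theta(X)$ and $d_\theta(X)\le d(X)$ separately; the second is trivial since every dense set is $\theta$-dense, so the real content is $d(X)\le d_\theta(X)$. First I would fix a $\theta$-dense set $A$ with $|A|=d_\theta(X)=:\kappa$ and a $\pi$-base $\scr{B}$ all of whose members have compact closures. The idea is to manufacture an honestly dense set of size at most $\kappa$ by, for each $a\in A$ and each basic open set $B\in\scr B$ that "sees" $a$ through closures, extracting a point of $B$; the compactness of $\cl B$ is exactly what lets a $\theta$-accumulation turn into a genuine accumulation.

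More precisely, for each $a\in A$, consider the family $\scr{B}_a=\{B\in\scr{B}: a\in\cl{B}\}$, equivalently $\{B\in\scr B: \text{every neighborhood of }a\text{ meets }B\}$. For each such $B$, since $\cl B$ is compact and $a$ lies in $\cl B$ (indeed $a$ is a $\theta$-adherent point of $B$, but here $a\in\cl B$ suffices), pick a point $p(a,B)\in \cl B$; actually I want $p(a,B)\in B$ when possible, but to control cardinality I will instead select one point from each $B\in\scr B$ once and for all, say $q_B\in B$, and set $D=\{q_B : B\in\scr B_a \text{ for some } a\in A\}=\{q_B : B\in\scr B,\ \cl B\cap A\ne\emptyset\}$. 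The key cardinality estimate is $|D|\le |\{B\in\scr B: \cl B\cap A\ne\emptyset\}|$, and I need this to be at most $\kappa$. This is where compactness enters decisively: for a fixed $a\in A$, if uncountably (or $>\kappa$-many) basic sets $B$ had $a\in\cl B$, that alone is not a contradiction, so I must be more careful — the honest approach is to not index by all of $\scr B$ but rather, for each $a$, choose a single witnessing point near $a$.

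Here is the cleaner route I would actually carry out. For each $a\in A$ and each open neighborhood $U$ of $a$: since $a\in\cl_\theta A$... no — instead use that $\scr B$ is a $\pi$-base, so inside any neighborhood $U$ of $a$ there is $B\in\scr B$ with $B\subset U$; then $\cl B\subset \cl U$, and compactness of $\cl B$ will be used as follows. Fix $a\in A$. The collection $\{\cl B : B\in\scr B,\ B\ni \text{some point, } \cl B \text{ small near } a\}$... Let me commit: for each $a\in A$ I claim there is a point $x_a\in X$ with $x_a$ in the closure of every neighborhood of $a$ and $x_a$ belonging to some fixed countable-type selection — concretely, pick a neighborhood filter and use that $\bigcap\{\cl B : B\in\scr B, B\subset U,\ U\ni a\}$ meets a compact set. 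I would argue: the family $\mathcal{F}_a=\{\cl B : B\in\scr B,\ a\in U,\ B\subseteq U \text{ for some open } U\}$ has the finite intersection property relative to any single $\cl{B_0}$ with $B_0\in\scr B$, $a\in\cl{B_0}$ (using that $\scr V_a$-type neighborhoods, hence $\scr B$-refinements, are directed downward), so by compactness of $\cl{B_0}$ the intersection $\bigcap\mathcal F_a\cap\cl{B_0}\ne\emptyset$; choose $x_a$ in it. Then $D=\{x_a : a\in A\}$ has $|D|\le\kappa$, and for any nonempty open $W$, pick $B\in\scr B$ with $B\subset W$ and then $a\in A$ with $a\in\cl B$ (possible since $B$ is open nonempty and $A$ is $\theta$-dense, so $\cl B\supseteq\cl{B'}\cap A\ne\emptyset$ for $B'\subset B$, $B'\in\scr B$); then $x_a\in\cl B\subseteq\cl W$, giving $x_a\in\overline W$ — and with a small additional refinement argument, $x_a\in W$ itself, showing $D$ is dense. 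Hence $d(X)\le|D|\le\kappa=d_\theta(X)$.

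The main obstacle I anticipate is precisely the cardinality bookkeeping in the paragraph above: naively the set of basic sets $B$ with $\cl B\cap A\ne\emptyset$ could be large, so the proof must select, per point $a\in A$, a single replacement point $x_a$ using a compactness/finite-intersection argument rather than ranging over all of $\scr B$. Getting $x_a$ to land inside a prescribed open set $W$ (not just its closure) requires one more refinement step — shrink $W$ to $B\in\scr B$, then shrink again so that $\cl{B'}\subseteq B$, and arrange $x_a\in\cl{B'}\subseteq B\subseteq W$ — and verifying that the downward-directed families involved genuinely have the finite intersection property against a compact closure is the delicate point. Once that selection is set up correctly, density of $D$ and the bound $|D|\le d_\theta(X)$ are routine, and combined with the obvious $d_\theta(X)\le d(X)$ the equality follows.
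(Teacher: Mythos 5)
Your easy inequality $d_\theta(X)\le d(X)$ is fine, but the construction you propose for $d(X)\le d_\theta(X)$ breaks at its central step. The family $\mathcal F_a=\{\overline{B}: B\in\mathcal B,\ B\subseteq U\ \text{for some open}\ U\ni a\}$ does not have the finite intersection property, not even relative to a fixed compact $\overline{B_0}$: $\pi$-base elements contained in a neighborhood of $a$ need not contain $a$ and can be pairwise disjoint (in $\mathbb R$ with the $\pi$-base of bounded open intervals, take $a=0$ and the disjoint intervals $(\tfrac{1}{n+1},\tfrac{1}{n})\subseteq(-1,1)$, whose closures are pairwise disjoint). So the point $x_a\in\bigcap\mathcal F_a$ you want to select need not exist. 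Even granting such an $x_a$, the density check does not connect to the selection: given a nonempty open $W$, $B\in\mathcal B$ with $B\subseteq W$, and $a\in A\cap\overline{B}$, the set $\overline{B}$ need not be a member of $\mathcal F_a$, because $W$ need not be a neighborhood of $a$ (the point $a$ may lie in $\overline{B}\setminus B$). More fundamentally, a single point $x_a$ chosen in advance cannot lie in, or even near, every basic set whose closure contains $a$, since those basic sets can be pairwise disjoint; so the per-point replacement scheme cannot be rescued by better cardinality bookkeeping.

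The missing idea is that no new dense set needs to be manufactured at all: in a space with a $\pi$-base whose elements have compact closures, every $\theta$-dense set is already dense. Indeed, if $D$ is $\theta$-dense but $\overline{D}\ne X$, then Lemma \ref{LCPB} (this is where compactness of the closures and Hausdorffness are used, once and for all) gives a basic set $B$ with $\overline{B}\cap\overline{D}=\emptyset$; any $x\in B$ then has the neighborhood $B$ with $\overline{B}\cap D=\emptyset$, contradicting $x\in\mathrm{cl}_\theta(D)$. Hence every $\theta$-dense set witnesses $d(X)$, giving $d(X)\le d_\theta(X)$ directly. This is the paper's argument: your instinct to use compactness is right, but it should be deployed through that separation lemma applied to the closed set $\overline{D}$, not through a pointwise compactness selection over the $\theta$-dense set.
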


\begin{proof}
If $D\subseteq X$ is dense in $X$, then $D$ is $\theta$-dense. Therefore $d_\theta(X)\le d(X)$.

Now let $D$ be $\theta$-dense in $X$ and suppose that $D$ is not dense in $X$. Then $\cl{D}\ne X$. Since $\cl{D}$ is a 
closed subset of $X$, if follows from Lemma \ref{LCPB} that there exists element $B$ of the $\pi$-base such that 
$\cl{B}\cap \cl{D}=\emptyset$. If $x\in B$, then $x\notin \mathrm{cl}_\theta(D)$ -- contradiction. Thus $D$ is dense in $X$ 
and therefore $d(X)\le d_\theta(X)$.
\end{proof}

Since $H$-closedness is a natural generalization of compactness, the following question is natural.

\begin{question}
Let $X$ be a Hausdorff space with a $\pi$-base $\scr{B}$ such that $\cl{B}$ is $H$-closed for every $B\in\scr{B}$. Is it 
true that $|X|\le 2^{\psi_c(X)t(X)wL(X)}$?
\end{question}

In relation to the above question we can prove the following:

\begin{theorem}\label{TBCG2}
If $X$ is a Hausdorff space with a $\pi$-base whose elements have $H$-closed closures, then 
$d_\theta(X)\le 2^{\psi_c(X)t(X)wL(X)}$.
\end{theorem}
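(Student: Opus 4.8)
The plan is to imitate the structure of the proof of Theorem~\ref{TBCG}, with two modifications: we only aim for a bound on the $\theta$-density rather than the cardinality, and we replace the use of compactness of closures by $H$-closedness. Let $\kappa=\psi_c(X)t(X)wL(X)$ and let $\scr{B}$ be a $\pi$-base whose elements have $H$-closed closures. For each $x\in X$ fix, exactly as before, a family $\scr{V}_x$ of open neighborhoods of $x$, closed under finite intersections, with $|\scr{V}_x|\le\kappa$ and $\bigcap\{\cl{V}:V\in\scr{V}_x\}=\{x\}$. We build a non-decreasing chain $\{U_\alpha:\alpha<\kappa^+\}$ of open sets by transfinite recursion, where at successor stages we throw in, for every $\scr{M}\in[\bigcup\{\scr{V}_x:x\in\cl{U_\alpha}\}]^{\le\kappa}$ with $X\setminus\cl{\bigcup\scr{M}}\ne\emptyset$, a set $B_{\scr{M}}\in\scr{B}$ contained in $X\setminus\cl{\bigcup\scr{M}}$, and at limit stages we take unions.

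The key bookkeeping change is the size estimate. Where the previous proof used $|\cl{B}|\le 2^{\psi(\cl{B})}\le 2^\kappa$ for compact $\cl{B}$, here we instead want to control $d_\theta$: since an $H$-closed Hausdorff space $Y$ satisfies $d(Y)\le 2^{\psi_c(Y)}$ (indeed the Dow--Porter bound gives even $|Y|\le 2^{\psi_c(Y)}$ for such $Y$, but we only need density), each $\cl{B}$ has a dense subset of size $\le 2^\kappa$. So at each successor stage $\beta=\alpha+1$ we not only adjoin the $B_{\scr{M}}$ but also fix for each a dense subset $D_{\scr{M}}$ of $\cl{B_{\scr{M}}}$ with $|D_{\scr{M}}|\le 2^\kappa$; there are at most $2^\kappa$ relevant $\scr{M}$'s, so the set $E_\beta$ of all points so collected has size $\le 2^\kappa$. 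Let $E=\bigcup\{E_\beta:\beta<\kappa^+\}$; then $|E|\le 2^\kappa$, and I claim $E$ is $\theta$-dense in $X$.

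To prove the claim, set $F=\cl{\bigcup\{U_\alpha:\alpha<\kappa^+\}}$ as before; since $t(X)\le\kappa$ and the chain has cofinality $\kappa^+$, $F=\bigcup\{\cl{U_\alpha}:\alpha<\kappa^+\}$ is regular-closed. Running the same argument as in Theorem~\ref{TBCG}: if $X\ne F$, Lemma~\ref{LCPB} (which applies since $\scr{B}$ has elements with compact, hence $H$-closed, closures --- wait: Lemma~\ref{LCPB} is stated for compact closures, so here one must re-prove the separation step using $H$-closedness, or rather observe that an $H$-closed subspace of a Hausdorff space need not be regular-closed-separable in the needed way) produces a contradiction via the open cover $\{V_x:x\in F\}$ and the $wL$-subfamily $\scr{M}$ with $F\subseteq\cl{\bigcup\scr{M}}$. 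This shows $X=F$. Finally, to see $E$ is $\theta$-dense: given any non-empty open $U$, pick $B\in\scr{B}$ with $B\subseteq U$; since $X=F$ and the $B_{\scr{M}}$'s were chosen to be disjoint from $\cl{\bigcup\scr{M}}$, an analysis as in the last paragraph of the proof of Theorem~\ref{TBCG} locates some stage where $B$ meets one of the adjoined sets $B_{\scr{M}}$, whence $\cl{B_{\scr{M}}}$ meets $U$, and then the dense subset $D_{\scr{M}}\subseteq E$ of $\cl{B_{\scr{M}}}$ has a point with $\cl{U}$ in its closure; chasing the $\theta$-closure definition gives $U$-meets-$\cl{(\text{point of }E)}$, i.e. $\cl{U}\cap E\ne\emptyset$. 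Hence $d_\theta(X)\le|E|\le 2^\kappa$.

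\textbf{Main obstacle.} The delicate point is the separation step: Lemma~\ref{LCPB} is proved using that compact subsets of Hausdorff spaces behave well, and its proof uses compactness of $\cl{B_1}$ twice. For $H$-closed closures, $\cl{B_1}\setminus B_1$ need not be $H$-closed (the $H$-closed property is hereditary only on regular-closed subsets), so I expect to need either a modified separation lemma --- showing that if $\scr{B}$ has $H$-closed closures and $F\subsetneq X$ is closed then some $B\in\scr{B}$ has $\cl{B}\cap F=\emptyset$ --- or to restructure the final contradiction so that only $\theta$-separation (which is exactly what $\theta$-density is about and what $H$-closedness naturally provides) is used. This is precisely why the conclusion is about $d_\theta$ rather than $d$ or $|X|$: the argument should go through with $\cl{}$-neighborhoods in place of $\cl{}$ wherever genuine separation was needed, and the honest writing-up of this substitution is the one nontrivial piece.
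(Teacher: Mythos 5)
There is a genuine gap: the one step you yourself single out as ``the main obstacle'' --- how to produce $B\in\scr{B}$ with $\cl{B}\cap F=\emptyset$ without compactness --- is exactly the step the proof must supply, and your proposal leaves it unresolved (you offer two possible routes but carry out neither). Worse, your write-up then asserts ``This shows $X=F$,'' which is not obtainable by this method: if it were, you would have proved $|X|\le 2^{\psi_c(X)t(X)wL(X)}$ for spaces with a $\pi$-base of $H$-closed closures, which the paper explicitly leaves as an open question. The paper's resolution is simpler than either of your suggested repairs and is the whole point of stating the theorem for $d_\theta$: one does not try to show $X=F$ at all, but supposes $X\ne\mathrm{cl}_\theta(F)$. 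By the very definition of the $\theta$-closure this yields a point $y_0$ and an open $U\ni y_0$ with $\cl{U}\cap F=\emptyset$; choosing $B\in\scr{B}$ with $B\sse U$ gives $\cl{B}\sse\cl{U}$ disjoint from $F$ for free, so no analogue of Lemma~\ref{LCPB} is needed. After that, $H$-closedness of $\cl{B}$ stands in for compactness: for each $x\in F$ the sets $X\setminus\cl{V_x(y)}$, $y\in\cl{B}$, cover $\cl{B}$, a finite proximate subcover plus closure of $\scr{V}_x$ under finite intersections yields $V_x\in\scr{V}_x$ with $\cl{V_x}\cap\cl{B}=\emptyset$, and the contradiction runs exactly as in Theorem~\ref{TBCG}, giving $X=\mathrm{cl}_\theta(F)$ and $d_\theta(X)\le|F|\le 2^\kappa$. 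You gesture at this (``only $\theta$-separation \dots is used''), but gesturing at the needed lemma is not proving it.

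A secondary point: your bookkeeping change (collecting dense subsets $D_{\scr{M}}$ of the sets $\cl{B_{\scr{M}}}$ into a set $E$) is both unnecessary and misaligned with the recursion. The successor step must enumerate all $\scr{M}\in[\bigcup\{\scr{V}_x:x\in\cl{U_\alpha}\}]^{\le\kappa}$, and the final cover $\{V_x:x\in F\}$ is indexed by \emph{all} points of $F$; both require the cardinality bound $|\cl{U_\alpha}|\le 2^\kappa$, hence the Dow--Porter bound $|\cl{B}|\le 2^{\psi_c(\cl{B})}$, not merely a density bound. Once you have that, $F$ itself (of size at most $2^\kappa$) is the $\theta$-dense witness, and the set $E$ buys nothing; your closing argument that $E$ is $\theta$-dense is, in any case, too vague to check and leans on the unestablished claim $X=F$.
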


\begin{proof}
Let $\kappa=\psi_{c}(X)t(X)wL(X)$ and let $\scr{B}$ be a $\pi$-base of non-empty open sets with $H$-closed closures. 
Notice that for each $B\in\scr{B}$ if $\psi_c(\cl{B})$ is the closed pseudocharacter of the space $\cl{B}$ and since $\cl{B}$ 
is $H$-closed, we have $\cl{B}\le 2^{\psi_c(\cl{B})}\le 2^\kappa$. Since $\psi_c(X)\le \kappa$, for each $x\in X$ we can fix a 
collection $\scr{V}_x$ of open neighborhoods of $x$ such that $|\scr{V}_x|\le \kappa$ and 
$\bigcap\{\cl{V}:V\in \scr{V}_x\}=\{x\}$. Without loss of generality we may assume that each $\scr{V}_x$ is closed under 
finite intersections.

We will construct by transfinite recursion a non-decreasing chain of open sets $\{U_\alpha : \alpha < \kappa^+\}$ such that 
\begin{itemize}
\item[(1)] $\cl{U_\alpha} \le 2^\kappa$ for every $\alpha < \kappa^+$, and 
\item[(2)] if $X\setminus\cl{\bigcup \scr{M}} \ne \emptyset$ for some 
$\scr{M} \in [\bigcup\{\scr{V}_x : x \in \cl{U_\alpha}\}]^{\le \kappa}$, 
then there is $B_\scr{M}\in\scr{B}$ such that $B_\scr{M}\subset U_{\alpha+1}\setminus\cl{\bigcup \scr{M}}$.
\end{itemize}

Let $B_0\in\scr{B}$ be arbitrary. We set $U_0=B_0$.  Then $|\cl{U_0}|\le 2^\kappa$. 
If $\beta =\alpha + 1$, for some $\alpha$, then for every 
$\scr{M} \in [\bigcup\{\scr{V}_x : x \in \cl{U_\alpha}\}]^{\le \kappa}$ such that $X\setminus\cl{\bigcup\scr{M}} \ne \emptyset$, 
we choose $B_{\scr{M}} \in \scr{B}$ such that $B_{\scr{M}} \subseteq X\setminus\cl{\bigcup \scr{M}}$. 
We define $U_{\beta} = U_\alpha\cup\bigcup\{B_\scr{M} : \scr{M} \in [\bigcup\{\scr{V}_x : x \in \cl{U_\alpha}\}]^{\le \kappa}, X\setminus\cl{\bigcup\scr{M}}\ne\emptyset\}$. Therefore, since $X$ is Hausdorff, we have  
$|\cl{U_{\beta}}| \le 2^\kappa$. If $\beta<\kappa^+$ is a limit ordinal we let 
$U_\beta = \bigcup_{\alpha<\beta} U_\alpha$. Then clearly $|U_\beta|\le 2^\kappa$, hence 
$|\cl{U_\beta}| \le 2^\kappa$.

Let $F = \bigcup\{\cl{U_\alpha} : \alpha < \kappa^+\}$. Then $|F| \le 2^\kappa$. 
Since $t(X)\le\kappa$, $F$ is closed and therefore $F = \cl{\bigcup\{U_\alpha : \alpha < \kappa^+\}}$. 
Thus, $F$ is a regular-closed set. 

We will show that $X = \mathrm{cl}_\theta(F)$. Suppose that $X \ne \mathrm{cl}_\theta(F)$ and let 
$y_0\in X \setminus \mathrm{cl}_\theta(F)$. Then there is a neighborhood $U$ of $y_0$ such that 
$\cl{U}\cap F=\emptyset$ and since $\scr{B}$ is a $\pi$-base, there is $B\in\scr{B}$
such that $B\subset U$, hence $\cl{B}\cap F=\emptyset$.
Then for every $x\in F$ and $y\in\cl{B}$ there is 
$V_x(y)\in \scr{V}_x$ such that $y\notin \cl{V_x(y)}$. Therefore, using the $H$-closedness of $\cl{B}$ and the fact that each 
$\scr{V}_x$ is closed under finite intersections, for every $x\in F$ we can find 
$V_x \in \scr{V}_x$ such that $\cl{V_x} \cap \cl{B}=\emptyset$; hence $\cl{B}\cap V_x=\emptyset$.
Clearly $\{V_x : x \in F\}$ is an open cover of $F$. Since $wL(X)$ is monotone with respect to regular-closed sets, there
exists $\scr{M} \in \{V_x : x \in F\}^{\le \kappa}$ such that $F \subseteq \cl{\bigcup\scr{M}}$. Then there 
exists $\alpha < \kappa^+$ such that $\scr{M} \in [\bigcup\{\scr{V}_x : x \in \cl{U_\alpha}\}]^{\le \kappa}$. 
As $\cl{B} \cap \bigcup \scr{M} = \emptyset$ it follows that $B\subset X\setminus\cl{\bigcup\scr{M}}$, hence 
$X\setminus\cl{\bigcup\scr{M}}\ne\emptyset$. Thus, there exists $B_\scr{M}\in\scr{B}$ such that 
$\emptyset\ne B_\scr{M} \subseteq U_{\alpha+1}\setminus\cl{\bigcup\scr{M}} \subseteq F\setminus\cl{\bigcup\scr{M}} = \emptyset$. 
Since this is a contradiction, we conclude that $X = \mathrm{cl}_\theta(F)$ and the proof is completed.
\end{proof}

We note that Theorem \ref{TBCG} follows immediately from Theorem \ref{TBCG2} and Theorem \ref{TCTC}. Also, as a 
direct collorary of Theorem \ref{TBCG2} and Lemma \ref{LBC} we obtain the following:

\begin{theorem}
If $X$ is a space with a dense subset $D \subseteq X$ such that each $d \in D$ has a
$H$-closed neighborhood, then $d_\theta(X)\le 2^{\psi_c(X)t(X)wL(X)}$.
\end{theorem}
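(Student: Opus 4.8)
The plan is to combine Lemma~\ref{LBC} with Theorem~\ref{TBCG2} in exactly the way Theorem~\ref{TBCG} was deduced from Theorem~\ref{TBCG2} and Lemma~\ref{LBC} a few lines earlier, only now keeping track of the $\theta$-density instead of the ordinary density. Suppose $X$ has a dense subset $D$ each point of which has an $H$-closed neighborhood. First I would invoke Lemma~\ref{LBC} in the case of the $H$-closed property: it produces a $\pi$-base $\scr{B}$ of $X$ such that $\cl{B}$ is $H$-closed for every $B\in\scr{B}$. This is the only place where the density hypothesis is used; after this step the space simply has a $\pi$-base whose elements have $H$-closed closures.

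**Next I would apply Theorem~\ref{TBCG2}** directly to $X$ with this $\pi$-base $\scr{B}$. Since all the cardinal functions $\psi_c$, $t$, and $wL$ appearing in the bound are intrinsic to $X$ and do not depend on the choice of $\pi$-base, Theorem~\ref{TBCG2} immediately gives $d_\theta(X)\le 2^{\psi_c(X)t(X)wL(X)}$, which is precisely the desired conclusion. So the proof is a one-line chaining of the two earlier results, entirely parallel to the remark that Theorem~\ref{TBCG} follows from Theorem~\ref{TBCG2} together with Theorem~\ref{TCTC}.

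**There is essentially no obstacle here**, since both ingredients are already proved in the excerpt; the only thing to be careful about is that Lemma~\ref{LBC} as stated requires $D$ to be dense (not merely $\theta$-dense) and gives a genuine $\pi$-base, which is exactly what Theorem~\ref{TBCG2} needs as input. One should also note in passing that, unlike the compact case in Theorem~\ref{TBCG}, we cannot upgrade $d_\theta(X)$ to $d(X)$ (Theorem~\ref{TCTC} used compactness of the $\pi$-base closures, not just $H$-closedness), and correspondingly we cannot conclude a bound on $|X|$ itself — which is exactly why the open Question just before Theorem~\ref{TBCG2} remains open.

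\begin{proof}
Since each point of the dense set $D$ has an $H$-closed neighborhood, Lemma~\ref{LBC} (applied with the $H$-closed property) yields a $\pi$-base $\scr{B}$ for $X$ such that $\cl{B}$ is $H$-closed for every $B\in\scr{B}$. Applying Theorem~\ref{TBCG2} to $X$ with this $\pi$-base gives $d_\theta(X)\le 2^{\psi_c(X)t(X)wL(X)}$.
\end{proof}
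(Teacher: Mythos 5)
Your proof is correct and matches the paper exactly: the paper states this theorem as a direct corollary of Theorem~\ref{TBCG2} and Lemma~\ref{LBC}, which is precisely the one-line chaining you give (noting, as you implicitly do, that an $H$-closed subspace of a Hausdorff space is closed, so an $H$-closed neighborhood is a closed neighborhood as Lemma~\ref{LBC} requires). Your side remark about why the conclusion is only $d_\theta(X)$ rather than $|X|$ is also accurate.
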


\end{document}